\date{}
\def\d{\delta}
\newcommand{\ud}{\mathrm{d}}
\newcommand{\ux}{\boldsymbol{x}}
\newcommand{\uu}{\boldsymbol{u}}
\newcommand{\uv}{\boldsymbol{v}}
\newcommand{\uD}{\mathbb{D}}
\newcommand{\bey}{\begin{eqnarray}}
\newcommand{\eey}{\end{eqnarray}}
\newcommand{\beq}{\begin{equation}}
\newcommand{\eeq}{\end{equation}}
\theoremstyle{plain}
\newtheorem{thm}{\hspace{6mm}Theorem}[section]
\newtheorem{lem}{\hspace{6mm}Lemma\,}[section]
\theoremstyle{definition}
\theoremstyle{remark}
\newtheorem{exam}{\hspace{6mm}Example}[section]
\title{Conditioning of Finite Volume Element Method for Diffusion Problems with General Simplicial Meshes}
\author{Xiang Wang%
\thanks{School of Mathematics, Jilin University, Changchun 130012, China (wxjldx@jlu.edu.cn).},
\and Weizhang Huang%
\thanks{Department of Mathematics, The University of Kansas, Lawrence, KS 66045 (whuang@ku.edu).},
\and Yonghai Li%
\thanks{School of Mathematics, Jilin University, Changchun 130012, China (yonghai@jlu.edu.cn).}
}
\begin{document}
\maketitle

\begin{abstract}
The conditioning of the linear finite volume element discretization for general diffusion equations is studied on arbitrary simplicial meshes. The 
condition number is defined as the ratio
of the maximal singular value of the stiffness matrix to the minimal eigenvalue of its symmetric part.
This definition is motivated by the fact that the convergence rate of
the generalized minimal residual method for the corresponding linear systems
is determined by the ratio. An upper bound for the ratio
is established by developing an upper bound for the maximal singular value and
a lower bound for the minimal eigenvalue of the symmetric part.
It is shown that the bound depends on three factors, the number of the elements in the mesh,
the mesh nonuniformity measured in the Euclidean metric, and the mesh nonuniformity measured
in the metric specified by the inverse diffusion matrix. It is also shown that the diagonal scaling
can effectively eliminates the effects from the mesh nonuniformity measured in the Euclidean metric.
Numerical results for a selection of examples in one, two, and three dimensions are presented.
\end{abstract}

\noindent{\textbf{ AMS 2010 Mathematics Subject Classification.} }
65N08, 65F35

\noindent{\textbf{ Key Words.}}
finite volume, condition number, diffusion problem, anisotropic mesh, finite element

\noindent{\textbf{ Abbreviated title.}}
Conditioning of FVEM with General Meshes

\section{Introduction}

The finite volume element method (FVEM) is a type of finite volume method that approximates
the solution of partial differential equations (PDEs) in a finite element space. It inherits many advantages
of finite volume methods such as the  local conservation property while avoiding the complexity other types of finite
volume method have in defining the gradient of the approximate solution
needed in the discretization of diffusion equations.
FVEM has been successfully applied to a broad range of problems
and studied extensively in theory; e.g., see \cite{BankRose1987,Barth2004,CJBi2007,Cai1991SIAM,Cai1991,LChen2010, ZYChenMathComp2015,SHChou2007,REEwing2002,Hackbusch1989,LCW2000,Liebau1996,TSchmidt1993,MYangCBiJLiu2009}.
To date, significant progress has been made in understanding FVEM's stability and superconvergence,
establishing error bounds, and developing high-order FVEMs.
For example, stability analysis and error estimates in $L^2$ or $H^1$ norm
are developed for triangular and quadrilateral meshes in \cite{CWX2012,LYZ2015,WL2016,JXu2009, ZhangZou2015}
while superconvergence results are established recently
in \cite{CZZ2015,LL2012,WangLi2017,ZhangZou2015}.

On the other hand, little progress has been made in understanding the conditioning of FVEM
discretization on general meshes. There are two major barriers toward this.
The first one is that FVEM does not preserve the symmetry of the underlying differential operator
and has a nonsymmetric stiffness matrix in general. It is well known that standard condition numbers
provide little useful information for the solution of nonsymmetric algebraic systems.
A common alternative for measuring the conditioning of a nonsymmetric matrix
is the ratio of its largest singular value to the minimal eigenvalue of its symmetric part.
This is largely motivated by the work of Eisenstat et al. \cite{Eisenstat1983} (or see (\ref{Eisenstat-1}) below)
stating that the ratio determines the convergence rate of the generalized minimal residual method (GMRES)
for the corresponding linear systems.
Establishing an upper bound for the ratio requires the development of an upper bound
for the maximal singular value and a lower bound for the minimal eigenvalue of the symmetric
part. This process is more difficult and complicated in general than that used to establish
bounds for the extremal eigenvalues for symmetric and positive definite matrices.

The second barrier comes from mesh nonuniformity. A main advantage of FVEM is its flexibility
to work with (nonuniform) adaptive meshes needed in many applications.
It thus makes sense that the analysis is carried out for general nonuniform meshes without prior requirements
on their uniformity and regularity. However, this is not a trivial task in general since
it will need to have a mathematical characterization for nonuniform meshes and take the interplay
between the mesh geometry and the underlying differential operator
(or the diffusion matrix in the case of diffusion equations) into full consideration.
For example, Li~et~al.~\cite{Liyonghai2012MathComp} study a multilevel preconditioning technique
for FVEM and establish a uniform bound on the ratio of the largest singular value to the minimal
eigenvalue of the symmetric part of the preconditioned stiffness matrix but their analysis
and results are valid only for quasi-uniform meshes.
Moreover, FVEM analysis (such as $L^2$ error estimation) typically obtains relevant
properties from the finite element (FE) discretization of the underlying problem
by estimating the difference between the corresponding bilinear forms.
This type of estimation has so far been carried out only for quasi-uniform or regular meshes too; e.g.
see \cite{LCW2000,Liyonghai2012MathComp,LYZ2015,WL2016}.

It is interesting to point out that much more effort and progress have been made
to understand the conditioning of FE discretization on general meshes.
Noticeably, Fried~\cite{Fri1973} obtains a bound on the condition number of the stiffness matrix for the linear FE approximation of the Laplace operator for a general mesh.
Bank and Scott~\cite{BaSc1989} show that the condition number of the diagonally scaled stiffness matrix
for the Laplace operator on an isotropic adaptive mesh is essentially the same as for a quasi-uniform mesh.
Ainsworth, McLean, and Tran~\cite{AMT2000} and Graham and McLean~\cite{Graham2006}
extend this result to the boundary element equations for locally quasi-uniform meshes.
Du et al.~\cite{Du2009} obtain a bound on the condition number of the stiffness matrix
for a general diffusion operator on a general mesh which reveals the relation between the condition number
and some mesh quality measures. The result is extended by Zhu and Du~\cite{ZD2011,Du-2014} to
parabolic problems. Shewchuk~\cite{Shewchuk2002} provides a bound
on the largest eigenvalue of the stiffness matrix scaled by the lumped mass matrix
in terms of the maximum eigenvalues of local element matrices.
More recently, bounds for the condition number of the stiffness matrix for the linear FE equations of
a general diffusion operator (and implicit Runge-Kutta schemes of the corresponding parabolic problem)
on an arbitrary mesh are developed in \cite{KaHu2013b,HKL2016,KaHuXu2012} while
the largest permissible time steps for explicit Runge-Kutta schemes for both linear and high order FE
approximations of parabolic problems are established in \cite{HKL2013b,HuKaLa2013}.
These bounds take into full consideration of the interplay between
the mesh geometry and the diffusion matrix.
Indeed, they show that the condition number of the stiffness matrix depends on three factors:
the factor depending on the number of mesh elements and corresponding to the condition number
of the linear FE equations for the Laplace operator on a uniform mesh, the mesh nonuniformity
measured in the metric specified by the inverse diffusion matrix, and the mesh nonuniformity measured
in the Euclidean metric. Moreover, the Jacobi preconditioning, or called the diagonal scaling,
can effectively eliminate the effects of mesh nonuniformity and reduce those of the mesh nonuniformity
with respect to the inverse diffusion matrix.

The objective of this paper is to study the conditioning for linear FVEM applied to anisotropic diffusion
problems on general simplex meshes in any dimension. We shall use the ratio of the maximal singular
value to the minimal eigenvalue of the symmetric part of the stiffness matrix to measure
its conditioning (cf. (\ref{cond_0}) below).
The task of estimating the condition number is then to develop an upper bound for the maximal singular value
and a lower bound for the minimal eigenvalue of the symmetric part of the stiffness matrix.
To this end, we use the FE bilinear form and show that the difference between the FE and FVE bilinear forms
is small when the mesh is sufficiently fine. We also use a strategy similar to that in \cite{KaHuXu2012}
for establishing a lower bound for the minimal eigenvalue of the symmetric part of the FVEM stiffness matrix.
The results of this work are similar to those in \cite{KaHuXu2012}. In particular, the bound for the condition
number depends on three factors too, i.e.,  the number of mesh elements and the mesh nonuniformity
measured in the Euclidean metric and in the metric specified by the inverse diffusion matrix.
Moreover, the analysis shows that the diagonal scaling
can effectively eliminate the effects of mesh nonuniformity in the Euclidean metric.
To a large extent, the current work can be viewed as an extension of \cite{KaHuXu2012} from FEM to FVEM.
However, this extension is by no means trivial. As mentioned earlier, we have to deal with the nonsymmetric nature
of the stiffness matrix in the current situation. Moreover, the current analysis is more technical and difficult
since FVEM depends heavily on the specific geometry of the dual mesh elements which are formed by partitioning
primary mesh elements in a certain manner.

The outline of this paper is as follows. The linear FVEM is described in \S\ref{sec:FVEM}
for the boundary value problem of an anisotropic diffusion equation.
The definition of the condition number of the stiffness matrix and its estimates are given in \S\ref{SEC:stiffness}.
A similar analysis is carried out for the mass matrix in \S\ref{SEC:mass}, followed
by a selection of numerical examples in one, two, and three dimensions in \S\ref{sec:numerical}.
Conclusions are made in \S\ref{SEC:conclusions}. Finally, the derivation for the expressions of two parameters
in the mass matrix is given in Appendix A.

\section{Linear finite volume element formulation}
\label{sec:FVEM}

We consider the boundary value problem (BVP) of an anisotropic diffusion equation as
\begin{eqnarray} \label{BVP-1}
  \left\{ \begin{array}{rcl}
  Lu\equiv -\nabla\cdot (\uD \nabla u)&=&f, \quad \text{ in }\; \Omega,\\
  u &=&0, \quad \text{ on }\; \partial \Omega,
  \end{array} \right.
\end{eqnarray}
where $\Omega\subset \mathbb{R}^{d}$ ($d= 1$, 2, 3, \dots) is a bounded polygonal/polyhedral domain,
$f$ is a given function, and $\mathbb{D}=\mathbb{D}(\ux)=(d_{ij})_{d\times d}$ is the diffusion matrix.
We assume that $\mathbb{D}$ is sufficiently smooth, symmetric, and strictly positive definite on $\Omega$
in the sense that there exist positive constants $0 < \underline{d} \le \overline{d}$ such that
\begin{eqnarray} \label{D-1}
\underline{d}\; |\xi|^{2}\leq \xi^{T} \uD (\ux) \xi \leq \overline{d}\; |\xi|^{2},\quad \forall \xi \in \mathbb{R}^{d},
\quad \forall \ux \in \Omega .
\end{eqnarray}

\begin{figure}[thb]
\centering
\begin{subfigure}{0.4\textwidth}
\centering
\includegraphics[scale = 0.15]{./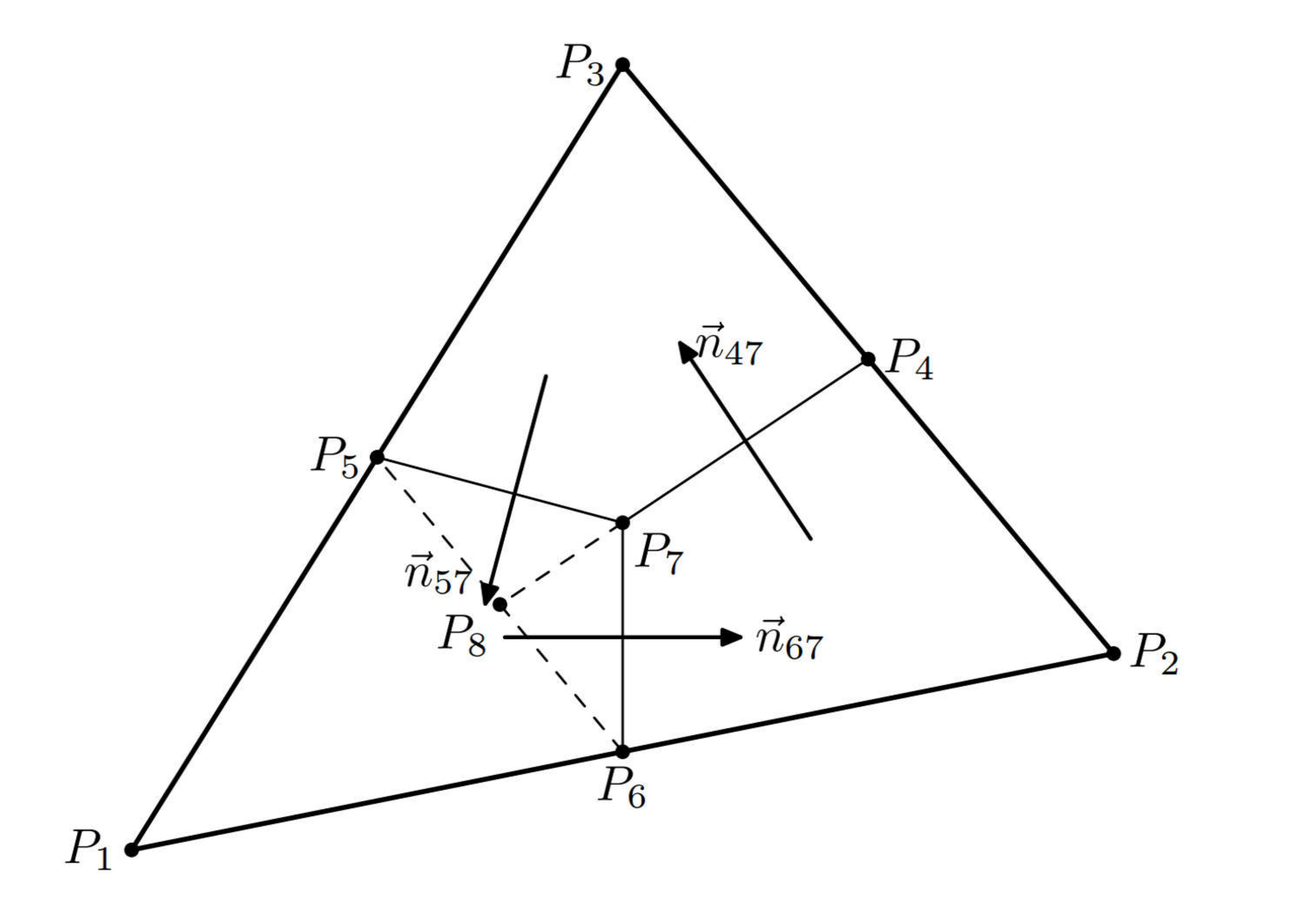}
\caption{}
\end{subfigure}
\begin{subfigure}{0.4\textwidth}
\centering
\includegraphics[scale = 0.2]{./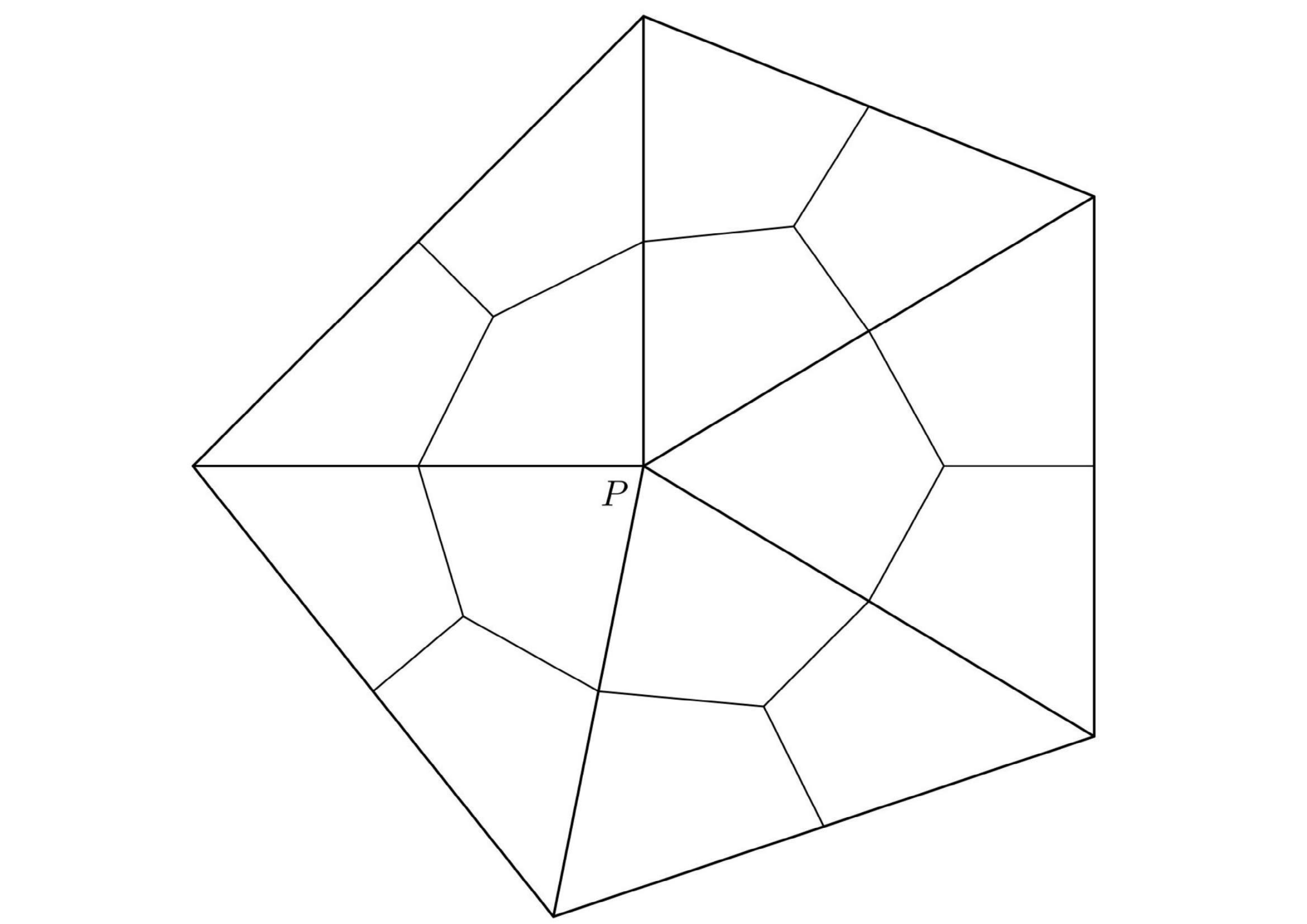}
\caption{}
\end{subfigure}
\caption{Illustration of dual element of Linear FVEM on element $K$ or at vertex $P$ (2-dimension).}
\label{fig:LinearFVMDual}
\end{figure}

Let $\mathcal{T}_{h}=\{K\}$ be a given simplicial mesh for $\Omega$,
$\mathcal{N}_{K}$ be the set of all computing nodes in $K$, and
$\mathcal{N}=\cup_{K\in\mathcal{T}_{h}}\mathcal{N}_{K}$.
For linear FVEM discretization, $\mathcal{N}_{K}$ is the set of the $(d+1)$ vertices of $K$.
Divide each simplex into $(d+1)$ sub-regions by plane or line segments connecting the centroids of the simplex
and its faces and edges. The dual element $K_P^{*}$ associated with the vertex $P$ is
formed by the sub-regions surrounding $P$. The dual mesh is then defined as
$\mathcal{T}_{h}^{*}=\{K_P^{*}\; | \; \forall P \in \mathcal{N} \}$ for linear FVEM.
The structures of the dual mesh on element $K \in \mathcal{T}_{h}$ (left) and at vertex $P$ (right) in two dimensions are illustrated in Fig.~\ref{fig:LinearFVMDual}, where $P_{i}$ $(i=4,\, 5,\, 6)$ are the edge midpoints
and $P_{7}$ is the centroid (i.e., the barycenter) of $K$. The trial and test function spaces are chosen as
\begin{eqnarray}
\mathit{U}^{h} &=& \left \{ u^{h} \; | \; u^{h}\in C(\Omega), \, u^{h}\vert_{K} \in P^{1}(K),\, \forall K\in \mathcal{T}_{h}
\quad\!\!\! \mathrm{and}\quad\!\!\! u^{h}(P) = 0, \forall P\in \mathcal{N}\cap(\partial \Omega)\right \}, \nonumber\\
\mathit{V}^{h} &=& \left \{ v^{h} \; | \;  v^h \in L^{2}(\Omega) ,\, v^h\vert_{\mathit{K}_{P}^{*}}=\mathrm{constant},
\forall K_{P}^{*}\in\mathcal{T}_{h}^{*}\quad\!\!\! \mathrm{and}\quad\!\!\!  v^h\vert_{\mathit{K}_{P}^{*}}=0,
\forall P\in\partial \Omega  \right \}.  \nonumber
\end{eqnarray}
Denote the mapping from $U^{h}$ to $V^{h}$ by $\Pi_{h}^{*}$, i.e.,
\[
\Pi_{h}^{*} u^h = u^h(P), \quad \forall \ux \in K_P^{*}, \quad P\in \mathcal{N} .
\]
We also denote the diameter of element $K$ by $h_K$ and define
\[
|\uD|_{1,\infty,K}=\max\limits_{i,j=1,..., d} |d_{ij}|_{1,\infty,K}, \quad
|\uD|_{2,\infty,K}=\max\limits_{i,j=1,..., d} |d_{ij}|_{2,\infty,K},\quad
\uD_{K}=\frac{1}{|K|}\int_{K}\uD \,\ud \ux ,
\]
where $|\cdot |_{m,p,K}$ is the semi-norm of Sobolev space $W^{m,p}(K)$, $|K|$ is the volume
(or the $d$-dimensional measure) of $K$, and $\uD_{K}$ is the average of $\uD$ over $K$.

The linear FVEM approximation of (\ref{BVP-1}) is to find $u^{h} \in U^{h}$ such that
\begin{eqnarray}
\label{FVEM-0}
a_{h}(u^{h},v^{h})=(f,v^{h}), \qquad  \forall v^{h} \in V^{h},
\end{eqnarray}
where
\begin{equation}
a_{h}(u^{h},v^{h}) = - \sum\limits_{K_{P}^{*}\in \mathcal{T}_{h}^{*}}
\int_{\partial K_{P}^{*}} (\uD \nabla u^{h})\cdot {\mathbf{n}} \, v^{h} \ud s ,
\quad
(f,v^{h}) =  \sum\limits_{K_{P}^{*}\in \mathcal{T}_{h}^{*}} \int_{K_{P}^{*}} f\, v^{h}
\ud \ux ,
\label{FVEM-1}
\end{equation}
and ${\mathbf{n}}$ is the unit outward normal of $\partial K_{P}^{*}$.
Denote the number of the elements and interior vertices (computing nodes) of $\mathcal{T}_{h}$ by $N$ and
$N_{vi}$, respectively. Assume that the vertices are ordered in such a way that the first $N_{vi}$
vertices are the interior vertices. Then, $U^{h}$ and $u^{h}$ can be expressed as
\begin{eqnarray}
U^{h} &=& \text{span}\{\phi_{1},\dots,\phi_{N_{vi}}\},  \\
u^{h}     &=& \sum_{j=1}^{N_{vi}} u_{j}\phi_{j},
\label{uhexp}
\end{eqnarray}
where $\phi_{j}$ is the linear basis function associated with the $j$-th vertex $P_j$.
Substituting $(\ref{uhexp})$ into $(\ref{FVEM-0})$
and taking $v^h$ as the characteristic function of $K_{P_i}^{*}$ ($i = 1, ..., N_{vi}$) successively,
we can rewrite (\ref{FVEM-0}) into matrix form as
\begin{eqnarray}
A_{FV} \; \boldsymbol{u}=\boldsymbol{f},  \label{elliptic_matrix}
\end{eqnarray}
where $\boldsymbol{u} = ( u_{1},\dots,u_{N_{vi}} )^{T}$, $\boldsymbol{f} = ( f_{1},\dots,f_{N_{vi}} )^{T}$,
and the entries of the stiffness matrix $A$ and the right-hand-side vector $\boldsymbol{f}$ are given by
\begin{eqnarray}
a_{ij}^{FV} &=& - \int_{\partial K_{P_{i}}^{*}} (\uD \nabla \phi_{j})\cdot {\mathbf{n}}\, \ud s,
            \quad i,j=1,\dots,N_{vi},
\label{A-1}
\\
f_{i}  &=&  \int_{K_{P_{i}}^{*}} f\, \ud \ux, \quad i=1,\dots,N_{vi} .
\end{eqnarray}
Let $\omega_i$ be the element patch associated with $P_i$ and $\omega_{ij} = \omega_i \cap \omega_j$.
Then we can rewrite $a_{ij}^{FV}$ as
\begin{eqnarray}
a_{ij}^{FV} = \sum\limits_{K \in \omega_{ij}} a_{ij, K}^{FV}, \qquad
a_{ij, K}^{FV} &=& -\int_{\partial K_{P_{i}}^{*}\cap K}(\uD \nabla \phi_{j})\cdot {\mathbf{n}}\, \ud s .
\label{eq:aijK}
\end{eqnarray}

\begin{lem}
The stiffness matrix $A_{FV}$ is symmetric when $\uD$ is piecewise constant on $\mathcal{T}_{h}$.
\end{lem}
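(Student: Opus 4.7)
The plan is to establish the element-wise identity
\[
a_{ij,K}^{FV} \;=\; |K|\,\nabla\phi_i\cdot(\uD_K\nabla\phi_j),\qquad \forall K\in\omega_{ij},
\]
that is, that the linear FVE element contribution coincides with the linear FE element contribution whenever $\uD$ is piecewise constant on $\mathcal{T}_h$. Since the right-hand side is manifestly symmetric in $(i,j)$ by the symmetry of $\uD_K$, summing over $K\in\omega_{ij}$ in (\ref{eq:aijK}) then gives $a_{ij}^{FV}=a_{ji}^{FV}$ immediately.

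To derive the identity, I first observe that on each $K$ the vector field $\uD_K\nabla\phi_j$ is constant (both $\uD$ and $\nabla\phi_j$ being constant on $K$) and therefore divergence-free. Applying the divergence theorem on $K_{P_i}^{*}\cap K$ and splitting its boundary into the interior interface $\partial K_{P_i}^{*}\cap K$ (with the outward normal of the dual cell) and the exterior portion $\partial K\cap K_{P_i}^{*}$ (with the outward normal ${\mathbf{n}}_K$ of $K$), the definition (\ref{eq:aijK}) rewrites as
\[
a_{ij,K}^{FV} \;=\; \int_{\partial K\cap K_{P_i}^{*}} (\uD_K\nabla\phi_j)\cdot{\mathbf{n}}_K \,\ud s .
\]
The centroidal dual construction, restricted to any $(d-1)$-face $F_l$ of $K$, coincides with the analogous dual subdivision of $F_l$ viewed as a $(d-1)$-simplex; hence, by affine invariance plus the symmetry of the regular simplex, it partitions $F_l$ into $d$ equal-measure sub-regions, one per vertex. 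For each face $F_l$ adjacent to $P_i$ (i.e.\ $l\neq i$), the piece lying in $K_{P_i}^{*}$ therefore has $(d-1)$-measure $|F_l|/d$. Combining this with the closed-surface identity $\sum_{l=1}^{d+1}|F_l|\,{\mathbf{n}}_l=0$ and the standard gradient formula $\nabla\phi_i|_K = -|F_i|\,{\mathbf{n}}_i/(d|K|)$ yields
\[
\sum_{l\neq i}\frac{|F_l|}{d}\,{\mathbf{n}}_l \;=\; -\frac{|F_i|}{d}\,{\mathbf{n}}_i \;=\; |K|\,\nabla\phi_i|_K ,
\]
from which the claimed element-wise identity follows at once.

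The main obstacle I anticipate is justifying the equal-measure claim for the centroidal subdivision of a simplex face in arbitrary dimension. I plan to handle this by induction on $d$: the centroid commutes with affine transformations and volume ratios are preserved under affine maps, so it suffices to verify the claim on the regular $(d-1)$-simplex, on which the symmetry group acts transitively on the vertices and forces the $d$ sub-regions to have equal $(d-1)$-measure. The remaining steps are elementary vector identities on simplices and should present no further difficulty.
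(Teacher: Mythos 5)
Your proof is correct, and it reaches the same intermediate identity as the paper --- that for piecewise constant $\uD$ the local FVE contribution reduces to the local FE contribution, $a_{ij,K}^{FV}=|K|\,\nabla\phi_i\cdot(\uD_K\nabla\phi_j)$, whose symmetry in $(i,j)$ follows from the symmetry of $\uD_K$ --- but the key computation is done by a genuinely different route. The paper evaluates $\int_{\partial K_{P_i}^{*}\cap K}\mathbf{n}\,\ud s=-|K|\nabla\phi_i$ head-on, decomposing the interior dual interface into its faces $S_{i,k}$ and invoking an explicit identity expressing each area-weighted normal $\mathbf{n}_{S_{i,k}}\,\tfrac{d(d+1)}{2}|S_{i,k}|$ in terms of the area-weighted normals of the primary faces, together with the closed-surface identity. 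You instead exploit that $\uD_K\nabla\phi_j$ is constant, hence divergence-free, and use the divergence theorem on $K_{P_i}^{*}\cap K$ to transfer the flux from the interior dual interface to $\partial K\cap K_{P_i}^{*}$; the computation then only requires knowing that the centroidal dual cell traces out a region of measure $|F_l|/d$ on each face $F_l$ containing $P_i$. What your approach buys is that you never need any metric information about the interior dual faces $S_{i,k}$ (the paper's identity for them is stated without proof and is the least obvious step of its argument); what it costs is the equal-measure claim for the face trace, which you justify soundly by affine equivariance of the centroidal construction plus the vertex-transitive symmetry of the regular simplex (the induction you mention is not even needed), and which is in any case the same fact underlying the paper's boundary orthogonality property (\ref{eq:orth_bndry}). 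One small point to make explicit if you write this up: the faces not containing $P_i$ meet $K_{P_i}^{*}$ only in a set of measure zero, so the exterior portion of the boundary is exactly $\cup_{l\neq i}(F_l\cap K_{P_i}^{*})$.
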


\begin{proof}
Denote the face of $K$ opposite to $P_{i}$ by $l_{i,K}$ and the distance from $P_{i}$ to $l_{i,K}$
by $\d(P_{i},l_{i,K})$. It is easy to see that
\[
|K|=\frac{1}{d}\,\d(P_{i},l_{i,K}) \, | l_{i,K} |,\quad
\nabla \phi_i = - \frac{\mathbf{n}_{l_{i,K}}}{\d(P_{i},l_{i,K})} ,
\]
where $ |l_{i,K} |$ is the area (for 3D) or
the $(d-1)$-dimensional measure of $l_{i,K}$, and $\mathbf{n}_{l_{i,K}}$ is the unit outward normal
of $l_{i,K}$. Let $S_{i,k}$ be the $k$-th face of $(\partial K_{P_i}^{*})\cap K$ ($k = 1, ..., d$).
It is noted that $S_{i,k}$'s are different from  $l_{i,K}$'s: $S_{i,k}$ is in the interior of $K$ while
$l_{i,K}$ is a part of $\partial K$. Moreover, $\cup_{k=1}^d S_{i,k}$ separates the dual element
corresponding to $P_{i}$ from other dual elements restricted in $K$. We have
\begin{eqnarray}
\int_{\partial K_{P_i}^{*} \cap K }  {\mathbf{n}} \, \ud s
    &=&  \sum_{k=1}^d ( {\mathbf{n}}_{S_{i,k}}  \,\int_{S_{i,k}} 1\, \ud s)
     =   \sum_{k=1}^d  {\mathbf{n}}_{S_{i,k}}  \,|S_{i,k}| \nonumber \\
     &=&   \frac{2}{d(d+1)} \sum_{k=1}^d  {\mathbf{n}}_{S_{i,k}}  \, \frac{d(d+1)}{2}|S_{i,k}|          \nonumber\\
    &=&  \frac{2}{d(d+1)} \, \sum_{k=1}^d ( {\mathbf{n}}_{l_{i,K}}\, |l_{i,K}|+
         \frac{1}{2}\sum_{\mbox{\tiny$\begin{array}{c}
            t=1,\dots,d\\
            t\not=k \end{array}$}}
            {\mathbf{n}}_{l_{t,K}}  \,|l_{t,K}|)          \nonumber\\
    &=&  \frac{2}{d(d+1)} \, ( d {\mathbf{n}}_{l_{i,K}}\, |l_{i,K}| +
            \frac{d-1}{2}\sum_{k=1}^d {\mathbf{n}}_{l_{k,K}}  \,|l_{k,K}|)    \nonumber\\
    &=&  \frac{2}{d(d+1)} \, ( d {\mathbf{n}}_{l_{i,K}}\, |l_{i,K}| -
            \frac{d-1}{2}{\mathbf{n}}_{l_{i,K}}\, |l_{i,K}|)          \nonumber\\
    &=&  \frac{1}{d} \quad    {\mathbf{n}}_{l_{i,K}}\, |l_{i,K}|
    = \frac{1}{d} \, (-\d(P_{i},l_{i,K})\, \nabla \phi_{i} ) \, |l_{i,K}| = -|K| \,  \nabla \phi_{i},    \nonumber
\end{eqnarray}
where we have used the equalities
\begin{eqnarray}
& {\mathbf{n}}_{S_{i,k}}  \, \frac{d(d+1)}{2}|S_{i,k}| = {\mathbf{n}}_{l_{i,K}}\, |l_{i,K}|+
         \frac{1}{2}\sum\limits_{\mbox{\tiny$\begin{array}{c}
            t=1,\dots,d,  \\
             t\not=k \end{array}$}}
            {\mathbf{n}}_{l_{t,K}}  \,|l_{t,K}|,         & \nonumber\\
& {\mathbf{n}}_{l_{i,K}}\, |l_{i,K}| + \sum_{k=1}^d {\mathbf{n}}_{l_{k,K}}  \,|l_{k,K}| = 0. & \nonumber
\end{eqnarray}
(The second equality states the fact that the sum of the unit outward normal vectors of all faces
multiplied by their $(d-1)$-dimensional measures vanishes for any polyhedron.)

When $\uD$ is piecewise constant on $\mathcal{T}_{h}$, we get, for $i \neq j$,
\begin{eqnarray}
a^{FV}_{ij,K} &=& - \int_{\partial K_{P_i}^{*} \cap K } (\uD\nabla \phi_{j})\cdot {\mathbf{n}} \, \ud s
              = - (\uD\nabla \phi_{j}) |_K\cdot \int_{\partial K_{P_i}^{*} \cap K }  {\mathbf{n}} \, \ud s , \nonumber\\
              &=&  |K| (\uD\nabla \phi_{j})|_K \cdot \nabla \phi_{i}|_K
              =  |K| (\uD\nabla \phi_{i})|_K \cdot \nabla \phi_{j}|_K = a^{FV}_{ji,K}. \nonumber
\end{eqnarray}
From this and (\ref{eq:aijK}), we get $a^{FV}_{ij}= a^{FV}_{ji}$, which implies that $A_{FV}$ is symmetric.
\end{proof}

The above proof also shows that $A_{FV}$ is not symmetric in general when
$\uD$ is not piecewise constant.

To conclude this section, we prove two orthogonality properties which are needed in the later analysis.

\begin{lem}
For any $K \in \mathcal{T}_{h}$, there hold
\begin{eqnarray}
\label{eq:orth_internal}
&\int_{\mathit{K}}g (v -\Pi_{h}^{*}v ) \ud \ux=0,\quad  &\forall g \in P^{0}(\mathit{K}),\;  \forall v \in P^{1}(\mathit{K}) , \\
\label{eq:orth_bndry}
&\int_{l_{i,K}}g (v -\Pi_{h}^{*}v ) \ud s =0,\quad &\forall g\in P^{0}(l_{i,K}), \; \forall v \in P^{1}(l_{i,K}),\;
\forall l_{i,K}\subset\partial K .
\end{eqnarray}
Here, $P^0(K)$ and $P^1(K)$ are the constant space and linear space on $K$.
\end{lem}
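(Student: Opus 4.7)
Since $g$ is constant on each domain of integration, both identities reduce to the single assertion that $v$ and $\Pi_h^{*}v$ have the same mean value on $K$ (resp.\ on $l_{i,K}$). So the plan is to prove
\[
\int_{K} v\,\ud \ux = \int_{K} \Pi_h^{*} v\,\ud \ux, \qquad
\int_{l_{i,K}} v\,\ud s = \int_{l_{i,K}} \Pi_h^{*} v\,\ud s,
\]
for linear $v$, from which (\ref{eq:orth_internal}) and (\ref{eq:orth_bndry}) follow by pulling out the constant $g$.

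The key geometric input is that the barycentric-type dual partition subdivides $K$ into $(d+1)$ sub-regions $K\cap K_{P_i}^{*}$ of \emph{equal} volume $|K|/(d+1)$. First I would verify this on the reference simplex: the construction that connects the centroid of $K$ with the centroids of all faces and edges is precisely the barycentric subdivision, which splits $K$ into $(d+1)!$ small simplices permuted transitively by the symmetric group acting on the vertices of $K$; hence they all have equal volume $|K|/(d+1)!$. The sub-region around $P_i$ is the union of exactly $d!$ of them (those corresponding to permutations starting with $P_i$), so it has volume $d!\cdot|K|/(d+1)!=|K|/(d+1)$. Affine invariance then transfers this to an arbitrary simplex $K$.

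Given equal sub-region volumes, the piecewise-constant definition of $\Pi_h^{*}$ gives
\[
\int_{K}\Pi_h^{*}v\,\ud\ux \;=\;\sum_{i=1}^{d+1} v(P_i)\,|K\cap K_{P_i}^{*}|\;=\;\frac{|K|}{d+1}\sum_{i=1}^{d+1} v(P_i).
\]
On the other hand, for linear $v$ the mean value over $K$ equals its value at the centroid $C_K=\frac{1}{d+1}\sum_i P_i$, i.e.\ $\int_K v\,\ud\ux = |K|\,v(C_K) = \frac{|K|}{d+1}\sum_i v(P_i)$. The two expressions agree, proving (\ref{eq:orth_internal}).

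For (\ref{eq:orth_bndry}), the face $l_{i,K}$ is itself a $(d-1)$-simplex with $d$ vertices $\{P_j:j\neq i\}$, and the induced dual partition on $l_{i,K}$ (cut by the centroids of $l_{i,K}$ and of its sub-faces) is exactly the barycentric subdivision of this $(d-1)$-simplex. Repeating the argument above in one lower dimension yields $d$ equal-area pieces of measure $|l_{i,K}|/d$, and the linearity of $v$ plus the fact that the centroid of $l_{i,K}$ is the average of $\{P_j:j\neq i\}$ closes the identity. The only nontrivial step is the equal-measure claim for the sub-regions; once that is in hand the rest is a one-line computation that uses only linearity of $v$ and the mean-value property of centroids.
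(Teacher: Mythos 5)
Your proof is correct and follows essentially the same route as the paper: both reduce the two identities to $\int_K v\,\ud\ux=\int_K\Pi_h^{*}v\,\ud\ux$ (and its analogue on $l_{i,K}$) by pulling out the constant $g$, and then use the facts that each sub-region $K\cap K_{P_i}^{*}$ has measure $|K|/(d+1)$ and that the vertex average reproduces the mean of a linear function. The only difference is that you spell out, via the barycentric-subdivision/affine-invariance argument, the equal-volume property of the sub-regions, which the paper uses without further justification.
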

\begin{proof}
Denote the vertices of $K$ by $P_{i}, \; i=1,\dots,d+1$.
For any $v \in P^{1}(\mathit{K})$, we have
\[
\int_{\mathit{K}}v  \ud \ux = \sum_{i=1}^{d+1} \left( \frac{|K|}{d+1}\, v(P_{i})\right)
       = \sum_{i=1}^{d+1}  v(P_{i}) \int_{\mathit{K}\cap K_{P_{i}}^{*}} 1 \ud \ux
       = \int_{\mathit{K}} \Pi_{h}^{*}v\; \ud \ux ,
\]
which implies
\[
\int_{\mathit{K}} \, (v-\Pi_{h}^{*}v) \, \ud \ux=0.
\]
Since $g\in P^{0}(\mathit{K})$ is constant on $K$, the above equality gives (\ref{eq:orth_internal}).

Similarly, we can obtain (\ref{eq:orth_bndry}).
\end{proof}

We consider the piecewise linear and piecewise constant approximations of $\uD$ on $\mathcal{T}_{h}$ as
\begin{align*}
& \uD_1(\ux) = \sum_{j=1}^{N_v} \uD (P_j) \phi_j (\ux) ,
\\
& \uD_0(\ux) = \frac{1}{|K|} \int_K \uD( \tilde{\ux}) d \tilde{\ux},\quad  \forall \ux \in K, \; K \in \mathcal{T}_{h} .
\end{align*}
Then from (\ref{eq:orth_internal}) and (\ref{eq:orth_bndry}) we have
\begin{align}
\sum\limits_{K\in\mathcal{T}_{h}} \int_{K}
                \nabla\cdot(\uD_1 \nabla v^{h})(v^{h}-\Pi_{h}^{*}v^{h}) \ud \ux = 0,\quad \forall v^h \in U^h,
\label{orth_bndry_1}
\\
\sum\limits_{K\in\mathcal{T}_{h}} \int_{\partial K}  (\uD_0 \nabla v^{h})\cdot{\mathbf{n}}
\, (v^{h}-\Pi_{h}^{*}v^{h}) \ud s =0,   \quad \forall v^h \in U^h,
\label{orth_internal_1}
\end{align}
where $\uD_0 |_{\partial K}$ is understood as $\uD_0 |_{\partial K} = \uD_0 |_{K}$.

\section{Conditioning of the stiffness matrix}
\label{SEC:stiffness}

In this section we study the conditioning of the stiffness matrix $A_{FV}$ of linear FVEM.
As shown in the previous section, $A_{FV}$ is generally nonsymmetric for a non-piecewise-constant diffusion matrix.
It is well known that a condition number in the standard definition
does not provide much information for the convergence of iterative methods for nonsymmetric systems.
On the other hand, when its symmetric part, $(A_{FV} + A_{FV}^T)/2$, is positive definite, which is to be shown
later in this section, the convergence of the generalized minimal residual method (GMRES)
is given by Eisenstat et al. \cite{Eisenstat1983} as
\begin{equation}
\| r_n \|_2 \le \left (1 - \frac{\lambda_{min}^2((A_{FV} + A_{FV}^T)/2)}{\sigma_{max}^2(A_{FV})}
\right )^{n/2} \| r _0\|_2 ,
\label{Eisenstat-1}
\end{equation}
where $\sigma_{max}(A_{FV})$ is the largest singular value of $A_{FV}$, $\lambda_{min}((A_{FV} + A_{FV}^T)/2)$
is the minimal eigenvalue of the symmetric part, $r_n$ is the residual
of the corresponding linear system at the $n$-th iterate, and $\| \cdot \|_2$ stands for the matrix or vector 2-norm.
From this, we can consider the ``condition number''
\begin{equation}
{\kappa}(A_{FV}) = \frac{\sigma_{max}(A_{FV})}{\lambda_{min}((A_{FV} + A_{FV}^T)/2)} .
\label{cond_0}
\end{equation}
This definition reduces to the standard definition of the condition number (in 2-norm)
for symmetric matrices. For notational simplicity
and without causing confusion, we use the standard notation for this definition here and will hereafter simply refer
this as the condition number of $A_{FV}$.

In the following we shall show that the symmetric part of $A_{FV}$ is positive definite when the mesh is sufficiently fine.
We shall also establish an upper bound for $\sigma_{max}(A_{FV})$ and a lower bound
for $\lambda_{min}((A_{FV} + A_{FV}^T)/2)$. Similar bounds will be obtained for the situation with
the Jacobin (diagonal) preconditioning. As an additional benefit, the bounds will be used to reveal
the effects of the interplay between the mesh geometry and the diffusion matrix on
the conditioning of $A_{FV}$.

In our analysis, we use results for the conditioning of the stiffness matrix ($A_{FE}$)
of a linear finite element approximation of (\ref{BVP-1}). This topic has been studied by
a number of researchers; e.g., see
\cite{Ainsworth1999,BaSc1989,Du2009,Fri1973,KaHuXu2012,HuKaLa2013,Wathen87,Du-2014}.
Recall that the entries of $A_{FE}$ are given by
\begin{equation}
a_{ij}^{FE} = \int_\Omega (\uD \nabla \phi_j) \cdot \nabla \phi_i \ud \ux
= \sum\limits_{K \in \omega_{i j}} \int_K (\uD \nabla \phi_j) \cdot \nabla \phi_i \ud \ux ,
\quad i, j = 1, ..., N_{vi}
\label{FE-1}
\end{equation}
and $A_{FE}$ is symmetric and positive definite for any diffusion matrix.

Denote the set of the indices of the neighboring vertices of $P_j$ (excluding $P_j$)
by $\mathcal{N}_{j}^{0}$ and define $\mathcal{N}_{j}=\{j\}\cup\mathcal{N}_{j}^{0}$.
Let $p_{\mathcal{N}_{j}}$ be the number of the elements (indices of points) in $\mathcal{N}_{j}$
and $p_{\max}=\max\limits_{j=1,\dots,N_{vi}}p_{\mathcal{N}_{j}}$. Let
\begin{equation}
\begin{cases}
& C_{0}= \frac{\sqrt{p_{\max}}}{\underline{d}}, \qquad
C_{\tilde{\nabla}} = \frac{d}{d+1} \left (\frac{\sqrt{d+1}}{d!}\right )^{\frac 2 d},
 \\
& C_{\uD,h_{K}}=d^2 \, ( h_{K}\,|\uD|_{2,\infty,K} +  |\uD|_{1,\infty,K}),\\
& H_{h}=\max\limits_{K\in\mathcal{T}_{h}} C_{\uD,h_{K}}\,h_{K} .
\end{cases}
\label{CDh-1}
\end{equation}
Notice that $H_h \to 0$ as $h \equiv \max_K h_K \to 0$.

\begin{lem}
\label{lem:diff-1}
There holds
\begin{equation}
|a^{FV}_{ij}-a^{FE}_{ij}| \le
\sum\limits_{K\in\omega_{ij}} C_{\uD,h_{K}} \,|K|^{1/2}\,|\phi_{j}|_{1,K} , \quad \forall i,j = 1, ..., N_{vi}.
\label{lem:diff-1-1}
\end{equation}
\end{lem}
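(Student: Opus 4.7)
The plan is to establish the bound one element at a time. Starting from the splittings $a_{ij}^{FV}=\sum_{K\in\omega_{ij}}a_{ij,K}^{FV}$ and $a_{ij}^{FE}=\sum_{K\in\omega_{ij}}a_{ij,K}^{FE}$, I will prove that $|a_{ij,K}^{FV}-a_{ij,K}^{FE}|\le C_{\uD,h_K}|K|^{1/2}|\phi_j|_{1,K}$ for each $K$ and then sum. To obtain a tractable identity for the element-wise difference, I apply the divergence theorem on the sub-region $K_{P_i}^*\cap K$ to $a_{ij,K}^{FV}=-\int_{\partial K_{P_i}^*\cap K}(\uD\nabla\phi_j)\cdot\mathbf{n}\,ds$, observing that $\partial(K_{P_i}^*\cap K)$ splits into the internal dual faces (where the outward normal agrees with $\mathbf{n}$) and a piece of $\partial K$ (where it agrees with the outward normal of $K$). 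Combining this with integration by parts applied to $a_{ij,K}^{FE}=\int_K(\uD\nabla\phi_j)\cdot\nabla\phi_i\,d\ux$, together with the identification of $\Pi_h^*\phi_i$ with the characteristic function of $K_{P_i}^*$, yields the clean identity
\[
a_{ij,K}^{FV}-a_{ij,K}^{FE}=\int_K(\phi_i-\Pi_h^*\phi_i)\,\nabla\cdot(\uD\nabla\phi_j)\,d\ux-\int_{\partial K}(\phi_i-\Pi_h^*\phi_i)(\uD\nabla\phi_j)\cdot\mathbf{n}\,ds .
\]

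The two orthogonality identities (\ref{eq:orth_internal}) and (\ref{eq:orth_bndry}) fit this formula perfectly. Because $\uD_1$ is linear on $K$ and $\nabla\phi_j$ is constant, $\nabla\cdot(\uD_1\nabla\phi_j)|_K$ is constant, so (\ref{eq:orth_internal}) lets me replace $\uD$ by $\uD-\uD_1$ in the volume integral; on each face $l_{k,K}$, $(\uD_0\nabla\phi_j)\cdot\mathbf{n}$ is constant and $\phi_i|_{l_{k,K}}$ is linear, so (\ref{eq:orth_bndry}) lets me replace $\uD$ by $\uD-\uD_0$ in the boundary integral. After these reductions only the non-polynomial parts of $\uD$ remain, and these can be bounded by its second and first derivatives respectively.

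For the pointwise estimates, the volume contribution uses the linear-interpolation bound $|\uD-\uD_1|_{1,\infty,K}\le C h_K|\uD|_{2,\infty,K}$ and the constancy of $\nabla\phi_j$ on $K$ to give $|\nabla\cdot((\uD-\uD_1)\nabla\phi_j)|\le C d^2 h_K|\uD|_{2,\infty,K}|\nabla\phi_j|$; multiplying by $|\phi_i-\Pi_h^*\phi_i|\le 1$, integrating over $K$, and using $|K||\nabla\phi_j|=|K|^{1/2}|\phi_j|_{1,K}$ produces the $h_K|\uD|_{2,\infty,K}$ half of $C_{\uD,h_K}$. The boundary contribution uses $\|\uD-\uD_0\|_\infty\le d h_K|\uD|_{1,\infty,K}$; the face $l_{i,K}$ drops out because both $\phi_i$ and $\Pi_h^*\phi_i$ vanish on it, and the remaining $d$ faces are handled via the geometric relations already established in the proof of the symmetry lemma (in particular $|K|=\frac{1}{d}d(P_k,l_{k,K})|l_{k,K}|$ together with the formula for $\int_{\partial K_{P_i}^*\cap K}\mathbf{n}\,ds$) to yield the $|\uD|_{1,\infty,K}$ half. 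I expect the boundary step to be the main obstacle: a direct estimate of $\int_{\partial K}|\phi_i-\Pi_h^*\phi_i|\,ds$ by $\sum_k|l_{k,K}|$ does not balance cleanly against $|K|^{1/2}|\phi_j|_{1,K}=|K|/d(P_j,l_{j,K})$ on highly anisotropic simplices, so the altitude-dependent quantities must be tracked carefully in order to absorb all geometric factors into the dimensional constant $d^2$ appearing in $C_{\uD,h_K}$.
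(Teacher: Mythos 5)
Your overall route coincides with the paper's: the divergence theorem applied elementwise (equivalently on $K_{P_i}^*\cap K$, using that $\Pi_h^*\phi_i$ is the characteristic function of $K_{P_i}^*$) to obtain exactly the identity you write for $a^{FV}_{ij,K}-a^{FE}_{ij,K}$, the insertion of $\uD_1$ in the volume term and $\uD_0$ in the surface term through the orthogonality relations (\ref{eq:orth_internal})--(\ref{eq:orth_bndry}), the bound $|\phi_i-\Pi_h^*\phi_i|\le 1$ so that $|\phi_i-\Pi_h^*\phi_i|_{0,K}\le |K|^{1/2}$, and the estimates $|\uD-\uD_1|_{1,\infty,K}\lesssim h_K|\uD|_{2,\infty,K}$, $\|\uD-\uD_0\|_{\infty,K}\lesssim h_K|\uD|_{1,\infty,K}$ together with the constancy of $\nabla\phi_j$ on $K$. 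Up to the surface term your argument is the paper's argument.

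The gap is precisely the step you flag at the end and then leave open: the surface estimate is never proved, and the route you sketch cannot deliver it. Bounding $\int_{\partial K}|\phi_i-\Pi_h^*\phi_i|\,ds$ by $\sum_{k\ne i}|l_{k,K}|$ and invoking $|K|=\frac 1d\,\d(P_k,l_{k,K})\,|l_{k,K}|$ leaves you with $h_K|l_{k,K}|=d\,|K|\,h_K/\d(P_k,l_{k,K})$, and the ratio $h_K/\d(P_k,l_{k,K})$ is exactly the aspect-ratio quantity that is unbounded on the general (anisotropic) meshes the lemma is meant to cover; the identity $\int_{\partial K_{P_i}^*\cap K}\mathbf{n}\,ds=-|K|\nabla\phi_i$ from the symmetry lemma does not help either, since the factor $\uD-\uD_0$ varies over each face and no exact cancellation of normals is available. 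The paper closes this step differently: it does not count face measures at all, but keeps the volume quantity $|\phi_i-\Pi_h^*\phi_i|_{0,K}$ and uses a (scaled) trace inequality on $K$, so that the factor $h_K$ gained from $\|\uD-\uD_0\|_{\infty,K}$ is spent on the trace scaling and the surface contribution is bounded by $d^2\,|\uD|_{1,\infty,K}\,|\phi_j|_{1,K}\,|\phi_i-\Pi_h^*\phi_i|_{0,K}$, which is the $|\uD|_{1,\infty,K}$ half of $C_{\uD,h_K}$ in (\ref{CDh-1}). Saying that ``altitude-dependent quantities must be tracked carefully'' is a statement of the difficulty, not its resolution, so the surface half of (\ref{lem:diff-1-1}) remains unestablished in your proposal. (A minor further mismatch: your volume bound carries a generic constant $C$ in front of $d^2h_K|\uD|_{2,\infty,K}$, whereas the lemma's constant is the bare $d^2$ of (\ref{CDh-1}), so the interpolation constant must be absorbed exactly as in the paper to recover (\ref{lem:diff-1-1}) verbatim.)
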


\begin{proof}
Using the definitions of $a^{FV}_{ij}$ and $a^{FE}_{ij}$ and the divergence theorem, we have
\begin{align}
a^{FV}_{ij} & =  {-\int_{\partial K_{P_{i}}^{*}}(\uD\nabla \phi_{j})\cdot {\mathbf{n}} \ud s}
=   - \sum_{K^{*}\in \mathcal{T}_{h}^{*}}\int_{\partial K^{*}}(\uD\nabla \phi_{j})\cdot {\mathbf{n}} \,\Pi_{h}^{*}\phi_{i}  \ud s
\nonumber \\
            & = - \sum\limits_{K\in\omega_{ij}} \sum_{K^{*}\in \mathcal{T}_{h}^{*}}
                    \int_{\partial K^{*}\cap K}(\uD\nabla \phi_{j})\cdot {\mathbf{n}} \,\Pi_{h}^{*}\phi_{i}  \ud s \nonumber\\
            & =  \sum\limits_{K\in\omega_{ij}} \int_{\partial K} (\uD \nabla \phi_{j})\cdot{\mathbf{n}}
                 \, \Pi_{h}^{*}\phi_{i} \ud s
                 - \sum\limits_{K\in\omega_{ij}} \int_{K}
                \nabla\cdot(\uD \nabla \phi_{j})\, \Pi_{h}^{*}\phi_{i} \ud \ux,
\nonumber
\\
a^{FE}_{ij} & =  \sum_{K\in \omega_{ij}}\int_{K}(\uD\nabla \phi_{j})\cdot \nabla \phi_{i} \ud \ux \nonumber\\
            & =  \sum\limits_{K\in\omega_{ij}} \int_{\partial K} (\uD \nabla \phi_{j})\cdot{\mathbf{n}} \, \phi_{i} \ud s
                 -\sum\limits_{K\in\omega_{ij}} \int_{K}  \nabla\cdot(\uD \nabla \phi_{j})\, \phi_{i} \ud \ux.   \nonumber
\end{align}
Noticing that $|\phi_{i}-\Pi_{h}^{*}\phi_{i}|\leq 1$, we get
\[
|\phi_{i}-\Pi_{h}^{*}\phi_{i}|_{0,K}=\left (\int_{K} \, (\phi_{i}-\Pi_{h}^{*}\phi_{i})^{2} \, \ud \ux\right )^{1/2}\leq
\left (\int_{K} \, 1 \, \ud \ux\right )^{1/2}  \leq  |K|^{1/2}.
\]
Using the orthogonality properties (\ref{orth_bndry_1}) and (\ref{orth_internal_1}),
the trace theorem, and the fact that $\nabla\phi_{j}$ is constant in each element, we have
\begin{align*}
|a^{FV}_{ij}-a^{FE}_{ij}| &= |\sum\limits_{K\in\omega_{ij}} \int_{\partial K}
                    (\uD \nabla \phi_{j})\cdot{\mathbf{n}}
                 \, (\phi_{i}-\Pi_{h}^{*}\phi_{i}) \ud s
                 -\sum\limits_{K\in\omega_{ij}} \int_{K}
                \nabla\cdot(\uD \nabla \phi_{j})\,(\phi_{i}-\Pi_{h}^{*}\phi_{i}) \ud \ux|
\\
    &\leq |\sum\limits_{K\in\omega_{ij}} \int_{K}
                \nabla\cdot((\uD-\uD_{1}) \nabla \phi_{j})(\phi_{i}-\Pi_{h}^{*}\phi_{i}) \ud \ux|    \nonumber\\
    & +|\sum\limits_{K\in\omega_{ij}} \int_{\partial K}  ((\uD-\uD_{0}) \nabla \phi_{j})\cdot{\mathbf{n}}
    \, (\phi_{i}-\Pi_{h}^{*}\phi_{i}) \ud s |     \nonumber\\
    &\leq \sum\limits_{K\in\omega_{ij}} (d^2\, h_{K}\,|\uD|_{2,\infty,K}\,|\phi_{j}|_{1,K}|\phi_{i}-\Pi_{h}^{*}\phi_{i}|_{0,K}
        + d^2\, |\uD|_{1,\infty,K}\,|\phi_{j}|_{1,K}|\phi_{i}-\Pi_{h}^{*}\phi_{i}|_{0,K})        \nonumber\\
    &\leq \sum\limits_{K\in\omega_{ij}} C_{\uD,h_{K}} \,|K|^{1/2}\,|\phi_{j}|_{1,K},
\end{align*}
which gives (\ref{lem:diff-1-1}).
\end{proof}

\begin{lem}
\label{lem:diff-2}
Let $a(\cdot,\cdot)$ be the bilinear form of FEM associated with the BVP $(\ref{BVP-1})$ and
$a_h(\cdot,\cdot)$ be the bilinear form of FVEM defined in $(\ref{FVEM-1})$. Then,
\begin{equation}
|a_{h} (u^{h},\Pi_{h}^{*}u^{h})-a(u^{h},u^{h})| \le H_{h}\,|u^{h}|_{1,\Omega}^{2},\quad \forall u^h \in U^h .
\label{lem:diff-2-1}
\end{equation}

\end{lem}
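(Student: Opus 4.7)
The plan is to mimic the proof of Lemma~\ref{lem:diff-1}, but replace the two basis functions $\phi_j,\phi_i$ by $u^h$ and $\Pi_h^{*}u^h$, and then sum over elements at the very end rather than entry by entry. The first step is to rewrite the FVEM bilinear form in a ``Green's identity'' form. Decompose the boundary sum in $a_h(u^h,\Pi_h^{*}u^h)$ by primary element: for each $K$, the subboundary pieces interior to $K$ can be closed up using the divergence theorem inside the subregions $K_{P}^{*}\cap K$ (exactly the manipulation carried out for $a_{ij}^{FV}$ in the proof of Lemma~\ref{lem:diff-1}). This should yield
\[
a_h(u^h,\Pi_h^{*}u^h)=\sum_{K}\int_{\partial K}(\uD\nabla u^h)\cdot\mathbf{n}\,\Pi_h^{*}u^h\,\ud s-\sum_{K}\int_{K}\nabla\cdot(\uD\nabla u^h)\,\Pi_h^{*}u^h\,\ud \ux.
\]
On the other hand, an elementwise integration by parts gives the analogous expression for $a(u^h,u^h)$ with $\Pi_h^{*}u^h$ replaced by $u^h$.

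Subtracting, the difference $a_h(u^h,\Pi_h^{*}u^h)-a(u^h,u^h)$ becomes a boundary term and a volume term, each paired with $(\Pi_h^{*}u^h-u^h)$. Here the orthogonality identities (\ref{orth_bndry_1}) and (\ref{orth_internal_1}) enter: subtracting zero, I can replace $\uD$ by $\uD-\uD_0$ in the boundary integral and $\uD$ by $\uD-\uD_1$ in the volume integral. This is the key gain, because the quantities $(\uD-\uD_0)$ and $\nabla(\uD-\uD_1)$ are $O(h_K)$ small, with
\[
\|\uD-\uD_0\|_{\infty,K}\le h_K|\uD|_{1,\infty,K},\qquad
\|\nabla\cdot((\uD-\uD_1)\nabla u^h)\|_{\infty,K}\le d^2h_K|\uD|_{2,\infty,K}\,|\nabla u^h|,
\]
where in the second estimate I exploit that $u^h$ is piecewise linear so the Hessian term drops out.

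The third step is to control $(\Pi_h^{*}u^h-u^h)$. Since on $K$ the function $u^h-\Pi_h^{*}u^h$ vanishes at each vertex sub-patch centroid direction, the pointwise bound $|u^h-\Pi_h^{*}u^h|\le h_K|\nabla u^h|$ on $K$ gives $|u^h-\Pi_h^{*}u^h|_{0,K}\le h_K |u^h|_{1,K}$ and, through the trivial trace estimate $|\partial K|\lesssim |K|/h_K$, the boundary bound $|u^h-\Pi_h^{*}u^h|_{0,\partial K}\le h_K^{1/2}|u^h|_{1,K}$. Combining Cauchy--Schwarz on the two terms with the identity $|\nabla u^h|\,|K|^{1/2}=|u^h|_{1,K}$, the volume term contributes $d^2 h_K^2|\uD|_{2,\infty,K}|u^h|_{1,K}^2$ and the boundary term contributes $d^2 h_K|\uD|_{1,\infty,K}|u^h|_{1,K}^2$ on each element, whose sum is exactly $C_{\uD,h_K}\,h_K|u^h|_{1,K}^2$.

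Summing over $K$ and bounding $C_{\uD,h_K}\,h_K\le H_h$ elementwise gives $|a_h(u^h,\Pi_h^{*}u^h)-a(u^h,u^h)|\le H_h|u^h|_{1,\Omega}^2$, as claimed. The main technical obstacle I anticipate is the first step: carefully verifying that the sum over dual elements $K_P^{*}$ of the FVEM fluxes, once restricted to a fixed primary element $K$ and integration-by-parted, collapses neatly into only $\partial K$ and $K$ integrals against $\Pi_h^{*}u^h$, with no residual interior-interface jump terms. This was transparent in Lemma~\ref{lem:diff-1} where the test function was a single nodal basis $\phi_i$, but here the test function is $\Pi_h^{*}u^h$ which is nonzero on every sub-patch of $K$; one must check that the contributions from the interior dual-mesh interfaces inside $K$ still cancel against each other because $\uD\nabla u^h|_K$ is single-valued across those interfaces. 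Once that identity is in hand the remaining steps are routine.
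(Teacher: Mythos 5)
Your proposal is correct and follows essentially the same route as the paper's own proof: the paper likewise writes $a_h(u^h,\Pi_h^*u^h)-a(u^h,u^h)$ as elementwise volume and boundary integrals against $u^h-\Pi_h^*u^h$ (the Green's-identity rewrite of $a_h$ being exactly the manipulation already carried out in the proof of Lemma~\ref{lem:diff-1}), invokes the orthogonality identities (\ref{orth_bndry_1}) and (\ref{orth_internal_1}) to replace $\uD$ by $\uD-\uD_1$ in the volume term and $\uD-\uD_0$ in the boundary term, bounds the two contributions by $d^2h_K^2|\uD|_{2,\infty,K}|u^h|_{1,K}^2$ and $d^2h_K|\uD|_{1,\infty,K}|u^h|_{1,K}^2$ via the trace theorem, and sums over $K$ to get $H_h|u^h|_{1,\Omega}^2$. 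The interior dual-interface issue you flag is handled exactly as in Lemma~\ref{lem:diff-1} (divergence theorem on each sub-region $K_P^*\cap K$, on which $\Pi_h^*u^h$ is constant and $\uD\nabla u^h$ is smooth), and on the trace-estimate step the paper is no more detailed than you are.
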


\begin{proof}
The proof is similar to that of Lemma~\ref{lem:diff-1}. Indeed, for any $u^h \in U^h$,
from the trace theorem and the orthogonality properties (\ref{orth_bndry_1}) and (\ref{orth_internal_1}) we have
\begin{eqnarray*}
 |a_{h} (u^{h},\Pi_{h}^{*}u^{h})-a(u^{h},u^{h})|
    &=& |\sum\limits_{K\in\mathcal{T}_{h}} \int_{K}
        \nabla\cdot(\uD \nabla u^{h})(u^{h}-\Pi_{h}^{*}u^{h}) \ud \ux \\
  &&\qquad  - \, \sum\limits_{K\in\mathcal{T}_{h}} \int_{\partial K}  (\uD \nabla u^{h})\cdot{\mathbf{n}} \, (u^{h}-\Pi_{h}^{*}u^{h}) \ud s|    \nonumber\\
    &\leq& |\sum\limits_{K\in\mathcal{T}_{h}} \int_{K}
                \nabla\cdot((\uD-\uD_{1}) \nabla u^{h})(u^{h}-\Pi_{h}^{*}u^{h}) \ud \ux|    \nonumber\\
    && \qquad +\, |\sum\limits_{K\in\mathcal{T}_{h}} \int_{\partial K}  ((\uD-\uD_{0}) \nabla u^{h})\cdot{\mathbf{n}} \, (u^{h}-\Pi_{h}^{*}u^{h}) \ud s |   \nonumber\\
    &\leq& \sum\limits_{K\in\mathcal{T}_{h}} (d^2 h_{K}^{2}|\uD|_{2,\infty,K}\,|u^{h}|_{1,K}^{2}
            + d^2 h_{K}|\uD|_{1,\infty,K}\,|u^{h}|_{1,K}^{2})  \nonumber\\
    &\leq& \sum\limits_{K\in\mathcal{T}_{h}} \left(C_{\uD,h_{K}} h_{K} |u^{h}|_{1,K}^{2} \right)
    \\
    & \leq &  H_{h}\,|u^{h}|_{1,\Omega}^{2}.
\nonumber
\end{eqnarray*}
\end{proof}

These two lemmas indicate that $A_{FV}$ and $A_{FE}$ are ``close'' when the mesh is sufficiently fine.
Thus, we can establish properties of $A_{FV}$ via estimating the difference between $A_{FV}$ and $A_{FE}$.

\subsection{Largest singular value of the stiffness matrix}

We assume that the reference element $\hat{K}$ has been chosen to be equilateral and unitary.
Denote the affine mapping between $\hat{K}$
and element $K$ by $F_K$ and its Jacobian matrix by $F_K^{'}$.

\begin{thm}
\label{thm:eigFV_max}
Assume that the mesh is sufficiently fine so that $H_{h} < \underline{d}$, where $\underline{d}$ is
the minimum eigenvalue of $\uD$ (cf. $(\ref{D-1})$).
Then, the largest singular value of the stiffness matrix $A_{FV} = (a^{FV}_{ij})$ for the linear FVEM
approximation of BVP $(\ref{BVP-1})$ is bounded above by
\begin{align}
\label{eigFV_max}
\sigma_{\max}(A_{FV}) \; \leq\; &
C_{\tilde{\nabla}} (d + 1) (1+C_{0}H_{h})  \max\limits_{j}
\sum_{K \in \omega_j} |K|\,\|(F_{K}^{'})^{-1} \uD_{K} (F_{K}^{'})^{-T}\|_{2} ,
\\
\label{eigSFVS_max}
\sigma_{\max}(S^{-1}A_{FV}S^{-1}) \; \leq\; & \frac{(1+ C_{0}H_{h})}{( 1 - \underline{d}^{-1}H_{h})} (d + 1),
\end{align}
where $S$ is the Jacobi preconditioner for $A_{FV}$, i.e., $S = (A_{FV}^D)^{1/2}$, with $A_{FV}^D$ being
the diagonal part of $A_{FV}$.
\end{thm}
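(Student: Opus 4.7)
The plan is to view $A_{FV}$ as a small perturbation of the symmetric positive definite FE stiffness matrix $A_{FE}$ and reduce both estimates to known bounds for the latter. Lemma~\ref{lem:diff-1} controls the entry-wise perturbation, and the identity $|K|^{1/2}\,|\phi_j|_{1,K}=|K|\,|\nabla\phi_j|_K$ (valid since $\nabla\phi_j$ is constant on $K$) lets me match the geometric factor $|K|\,\|(F_K')^{-1}\uD_K(F_K')^{-T}\|_2$ appearing on the right-hand side of the target.

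For (\ref{eigFV_max}), I would write $A_{FV}=A_{FE}+E$ with $E=A_{FV}-A_{FE}$ and apply $\sigma_{\max}(A_{FV})\le\sigma_{\max}(A_{FE})+\|E\|_2$. The first term is handled by the established FE estimate of~\cite{KaHuXu2012},
\[
\sigma_{\max}(A_{FE})\le C_{\tilde\nabla}(d+1)\,\max_j\sum_{K\in\omega_j}|K|\,\|(F_K')^{-1}\uD_K(F_K')^{-T}\|_2.
\]
For $\|E\|_2$ I would use $\|E\|_2\le\sqrt{\|E\|_1\,\|E\|_\infty}$ and bound both the row and the column sums of $|e_{ij}|$ via Lemma~\ref{lem:diff-1}, using that each row and column has at most $p_{\max}$ nonzeros and that $C_{\uD,h_K}h_K\le H_h$. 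A Cauchy--Schwarz step that upgrades $|K|\,|\nabla\phi_j|_K$ to $|K|^{1/2}\,\|(F_K')^{-1}\uD_K(F_K')^{-T}\|_2^{1/2}$ (using the ellipticity bound $\uD_K\succeq\underline d\,I$ together with the fixed reference element) then collapses the result to $\|E\|_2\le C_0 H_h\,\sigma_{\max}(A_{FE})$, yielding the $(1+C_0H_h)$ factor.

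For (\ref{eigSFVS_max}) I would follow an analogous splitting, but compare against the diagonally scaled FE matrix $S_{FE}^{-1}A_{FE}S_{FE}^{-1}$, where $S_{FE}=(A_{FE}^D)^{1/2}$. The key new ingredient is the diagonal comparison $a_{ii}^{FV}\ge(1-\underline d^{-1}H_h)\,a_{ii}^{FE}$, which I would derive by plugging $u^h=\phi_i$ into Lemma~\ref{lem:diff-2}: since $\Pi_h^*\phi_i$ is exactly the characteristic function of $K_{P_i}^*$, we have $a_h(\phi_i,\Pi_h^*\phi_i)=a_{ii}^{FV}$, while $a(\phi_i,\phi_i)=a_{ii}^{FE}\ge\underline d\,|\phi_i|_{1,\Omega}^2$. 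This gives $S^{-1}\preceq(1-\underline d^{-1}H_h)^{-1/2}S_{FE}^{-1}$ (in the diagonal sense), so
\[
\sigma_{\max}(S^{-1}A_{FV}S^{-1})\le(1-\underline d^{-1}H_h)^{-1}\,\sigma_{\max}(S_{FE}^{-1}A_{FV}S_{FE}^{-1}),
\]
and the right-hand side is in turn bounded by $(1+C_0H_h)(d+1)$ by repeating the paragraph-2 argument with the known scaled FE estimate $\sigma_{\max}(S_{FE}^{-1}A_{FE}S_{FE}^{-1})\le d+1$ in place of the unscaled one.

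The main obstacle I anticipate is the lower bound on $a_{ii}^{FV}$: the entry-wise Lemma~\ref{lem:diff-1}, applied naively to the diagonal entry, produces only a geometry-laden upper bound on $|a_{ii}^{FV}-a_{ii}^{FE}|$ that is too weak, which is why I must route through the energy-form Lemma~\ref{lem:diff-2} at a single basis function. A secondary technical issue is the genuine non-symmetry of $E$: Lemma~\ref{lem:diff-1} bounds $|e_{ij}|$ by an expression involving only $|\phi_j|_{1,K}$, so the row and column sums of $|e_{ij}|$ must be controlled separately; however, the shared sparsity pattern of the $\omega_{ij}$'s and the $\sqrt{p_{\max}}$ factor absorbed into $C_0$ cause both bounds to come out uniformly, making the asymmetry a nuisance rather than an actual barrier.
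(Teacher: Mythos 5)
Your overall strategy is the same as the paper's: treat $A_{FV}$ as a perturbation of $A_{FE}$, control $A_\delta=A_{FV}-A_{FE}$ through Lemma~\ref{lem:diff-1}, quote the known FE bounds ($\sigma_{\max}(A_{FE})\le (d+1)\max_j a^{FE}_{jj}$ together with \cite[Lemma~2.5]{HuKaLa2013}, and $\sigma_{\max}(S_{FE}^{-1}A_{FE}S_{FE}^{-1})\le d+1$), and pass from $a^{FE}_{jj}$ to $a^{FV}_{jj}$ through the relative diagonal bound $|a^{FV}_{jj}-a^{FE}_{jj}|\le \underline d^{-1}H_h\,a^{FE}_{jj}$ to handle the Jacobi scaling. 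The differences are technical rather than structural. First, the paper does not use $\|A_\delta\|_2\le\sqrt{\|A_\delta\|_1\|A_\delta\|_\infty}$; it proves the weighted estimate $\|A_\delta\uv\|\le (d+1)C_0H_h\|A^{D}_{FE}\uv\|$ (see (\ref{eq:Adelta_AfeD})) by H\"older's inequality on the quadratic form plus the lower bounds (\ref{eq:AFEv}) and (\ref{eq:AFEv-2}), and this weighted form is what lets the same computation be repeated verbatim for an arbitrary diagonal scaling. Your route works, but note that your intermediate claim $\|A_\delta\|_2\le C_0H_h\,\sigma_{\max}(A_{FE})$ is stronger than what the $\ell^1$--$\ell^\infty$ interpolation gives (the row sums carry a factor $p_{\max}$, the column sums a factor $d+1$, so you get roughly $\sqrt{(d+1)p_{\max}}\,\underline d^{-1}H_h\max_j a^{FE}_{jj}$); this still implies the needed bound $\|A_\delta\|_2\le C_0H_h(d+1)\max_j a^{FE}_{jj}$ because $\sqrt{d+1}\le d+1$, so the slack in the target absorbs it, but you should state the weaker inequality you can actually prove. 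Second, your stated ``main obstacle'' is not one: the paper obtains exactly the diagonal comparison (\ref{eq:ajjvsajj}) from the entry-wise Lemma~\ref{lem:diff-1}, since $|a^{FV}_{jj}-a^{FE}_{jj}|\le\sum_{K\in\omega_j}C_{\uD,h_K}|K|^{1/2}|\phi_j|_{1,K}\le H_h\sum_{K\in\omega_j}|\phi_j|_{1,K}^2\le\underline d^{-1}H_h\,a^{FE}_{jj}$ using (\ref{eq:AFEv-1}) and (\ref{eq:AFEv-2}); your alternative via Lemma~\ref{lem:diff-2} with $u^h=\phi_i$ (and the observation $a_h(\phi_i,\Pi_h^*\phi_i)=a^{FV}_{ii}$) is equally valid and gives the same constant, it is just not forced on you. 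Finally, in your scaled argument make sure the perturbation term is estimated with the scaling inside (i.e.\ bound $\|S^{-1}A_\delta S^{-1}\uv\|$ directly, as the paper's ``same procedure'' does), rather than pulling out $\|S^{-1}\|\,\|S\|$, which would reintroduce the ratio $\max_j a^{FV}_{jj}/\min_j a^{FV}_{jj}$ that the scaling is meant to remove.
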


\begin{proof}

Let $A_{\delta} = A_{FV}-A_{FE}$.  Then, from H\"{o}lder's inequality and Lemma~\ref{lem:diff-1} we have
\begin{align}
\|A_{\delta}\uv\|^2 &=\sum_{i=1}^{N_{vi}}
                        \left (\sum_{j\in\mathcal{N}_{i}} v_{j}(a^{FV}_{ij}-a^{FE}_{ij}) \right )^2 \nonumber\\
                    &\leq \sum_{i=1}^{N_{vi}}  p_{_{\mathcal{N}_{i}}}
                        \sum_{j\in\mathcal{N}_{i}} v_{j}^2(a^{FV}_{ij}-a^{FE}_{ij})^2  \nonumber\\
                    &\leq p_{\max}  \sum_{j=1}^{N_{vi}}  v_{j}^2
                        \sum_{i\in\mathcal{N}_{j}} (a^{FV}_{ij}-a^{FE}_{ij})^2  \nonumber\\
                    &\leq p_{\max}  \sum_{j=1}^{N_{vi}}  v_{j}^2
                        \sum_{i\in\mathcal{N}_{j}} \left (\sum\limits_{K\in\omega_{ij}} C_{\uD,h_{K}}
                        \,|K|^{1/2}\,|\phi_{j}|_{1,K} \right )^2,
\label{eq:aijFVaijFE}
\end{align}
where $p_{_{\mathcal{N}_{i}}}$ is the number of the elements (indices of points) in $\mathcal{N}_i$  and $p_{\max}$ is the maximum value of all $p_{_{\mathcal{N}_{i}}}$ (defined upon (\ref{CDh-1})).

Next we establish a lower bound for $a_{jj}^{FE}$. We have
\begin{equation}
a^{FE}_{jj} = \sum\limits_{K\in \omega_{j}} \int_{K}(\uD \nabla \phi_{j}) \cdot \nabla \phi_{j}  \ud \ux
\geq \underline{d}  \sum\limits_{K\in \omega_{j}} \int_{K} (\nabla \phi_{j}) \cdot \nabla \phi_{j}  \ud \ux
= \underline{d} \sum\limits_{K\in \omega_{j}} |\phi_j|_{1,K}^2.
\label{eq:AFEv-1}
\end{equation}
We observe that when going through all elements in $\mathcal{N}_{j}$, each mesh element in $\omega_{j}$
will be encountered $(d+1)$ times (due to the fact that each element has $(d+1)$ vertices).
Then, from Jensen's inequality we have
\begin{align}
(a^{FE}_{jj})^2  & \ge \underline{d}^2 \left ( \sum\limits_{K\in \omega_{j}} |\phi_j|_{1,K}^2 \right )^2
    = \underline{d}^2 \left (\frac{1}{d+1} \sum\limits_{i\in\mathcal{N}_{j}}
        \sum\limits_{K\in\omega_{ij}} |\phi_{j}|_{1,K}^{2}  \right )^2
 \nonumber \\
    & \geq \frac{\underline{d}^2}{(d+1)^{2}} \sum\limits_{i\in\mathcal{N}_{j}}
        \left (\sum\limits_{K\in\omega_{ij}} |\phi_{j}|_{1,K}^{2}  \right )^2.
\label{eq:AFEv}
\end{align}
Moreover,
\begin{equation}
|\phi_{j}|_{1,K}=\left (\int_{K}|\nabla \phi_{j}|^{2} \ud \ux\right )^{1/2}
\geq \left (\int_{K}(\frac{1}{h_{K}})^{2} \ud \ux\right )^{1/2}=\frac{1}{h_{K}}\,|K|^{1/2}.
\label{eq:AFEv-2}
\end{equation}
Denoting the diagonal part of $A_{FE}$ by $A_{FE}^{D}$ and combining (\ref{eq:aijFVaijFE}),
(\ref{eq:AFEv}), and (\ref{eq:AFEv-2}), we have
\begin{eqnarray}
\label{eq:Adelta_AfeD}
\sup_{\uv\not=0} \frac{\|A_{\delta}\uv\|^2}{\|A_{FE}^D\uv\|^2}
        &\leq& \sup_{\uv\not=0} \frac{p_{\max}  \sum\limits_{j=1}^{N_{vi}} v_{j}^2 \sum\limits_{i\in\mathcal{N}_{j}}
        \left (\sum\limits_{K\in\omega_{ij}} C_{\uD,h_{K}} \,|K|^{1/2}\,|\phi_{j}|_{1,K}\right )^2 }
            {\sum\limits_{j=1}^{N_{vi}} v_{j}^2 (a^{FE}_{jj})^2}  \nonumber\\
        &\leq& p_{\max} \max\limits_{j=1,\dots,N_{vi}}
            \frac{ \sum\limits_{i\in\mathcal{N}_{j}}
        \left (\sum\limits_{K\in\omega_{ij}} C_{\uD,h_{K}} \,|K|^{1/2}\,|\phi_{j}|_{1,K}\right )^2}
            { (a^{FE}_{jj})^2}   \nonumber\\
        &\leq& \frac{(d+1)^2 p_{\max}}{\underline{d}^2} \max\limits_{j=1,\dots,N_{vi}}
            \frac{ \sum\limits_{i\in\mathcal{N}_{j}} \left (\sum\limits_{K\in\omega_{ij}}
            C_{\uD,h_{K}} \,|K|^{1/2}\,|\phi_{j}|_{1,K}\right )^2}
             { \sum\limits_{i\in\mathcal{N}_{j}} \left (\sum\limits_{K\in\omega_{ij}} |\phi_{j}|_{1,K}^{2}  \right )^2}
                       \nonumber\\
        &\leq& \frac{(d+1)^{2}\,p_{\max}}{\underline{d}^2} \max\limits_{j=1,\dots,N_{vi}} \max\limits_{i\in\mathcal{N}_{j}}
            \left( \frac{ \sum\limits_{K\in\omega_{ij}} C_{\uD,h_{K}} \,|K|^{1/2}\,|\phi_{j}|_{1,K}}
            {\sum\limits_{K\in\omega_{ij}}  |\phi_{j}|_{1,K}^{2}  } \right)^{2}
                      \nonumber\\
        &\leq& \frac{(d+1)^{2}\,p_{\max}}{\underline{d}^2} \max\limits_{j=1,\dots,N_{vi}} \max\limits_{i\in\mathcal{N}_{j}}
            \left( \max\limits_{K\in\omega_{ij}} \frac{  C_{\uD,h_{K}} \,|K|^{1/2}\,|\phi_{j}|_{1,K}}
            {  |\phi_{j}|_{1,K}^{2}  } \right)^{2}
                      \nonumber\\
        &\leq& \frac{(d+1)^{2}\,p_{\max}}{\underline{d}^2} \max\limits_{j=1,\dots,N_{vi}} \max\limits_{i\in\mathcal{N}_{j}}
            \left( \max\limits_{K\in\omega_{ij}} \frac{ C_{\uD,h_{K}} \,|K|^{1/2}}
            { \frac{1}{h_{K}}\, |K|^{1/2} } \right)^{2}
                      \nonumber\\
        & = & \frac{(d+1)^{2}\,p_{\max}}{\underline{d}^2} \left( \max\limits_{K\in\mathcal{T}_{h}}
            \frac{ C_{\uD,h_{K}} \,|K|^{1/2}}
            { \frac{1}{h_{K}}\, |K|^{1/2} }  \right)^2            \nonumber\\
        &\leq& (d+1)^2C_{0}^2 H_{h}^2,
\end{eqnarray}
where $C_0$ and $H_h$ are defined in (\ref{CDh-1}).
From this we have
\begin{eqnarray*}
\sup_{\uv\not=0}\frac{\|A_{\delta}\uv\|^{2}}{\|\uv\|^{2}}
        &\leq& \sup_{\uv\not=0}\frac{\|A_{\delta}\uv\|^{2}}{\|A_{FE}^{D}\uv\|^{2}}
\cdot \sup_{\uv\not=0}\frac{\|A_{FE}^{D}\uv\|^{2}}{\|\uv\|^{2}} \leq (d+1)^2 C_{0}^2 H_{h}^2 (\max_{j} a_{jj}^{FE})^{2}.
\end{eqnarray*}
Then,
\begin{eqnarray} \label{eq:sigma_max_A0}
\sup_{\uv\not=0}\frac{\|A_{FV}\uv\|^{2}}{\|\uv\|^{2}} & = & \sup_{\uv\not=0} \frac{\|A_{FE}\uv + A_{\delta}\uv\|^{2}}{\|\uv\|^{2}}  \nonumber\\
        &\leq& \sup_{\uv\not=0} \frac{\|A_{FE}\uv\|^{2} + 2\|A_{FE}\uv \| \,
            \|A_{\delta}\uv\|+\|A_{\delta}\uv\|^{2}}{\|\uv\|^{2}}   \nonumber\\
        &\leq&  \sup_{\uv\not=0} \frac{\|A_{FE}\uv\|^{2}}{\|\uv\|^{2}}    + 2\sup_{\uv\not=0}\frac{\|A_{FE}\uv \|}{\|\uv\|}
               \cdot \sup_{\uv\not=0}\frac{\|A_{\delta}\uv\|}{\|\uv\|} + \sup_{\uv\not=0}\frac{\|A_{\delta}\uv\|^{2}}{\|\uv\|^{2}}   \nonumber\\
        &\leq& (d+1)^2(\max_{j} a_{jj}^{FE})^{2}
                 + 2\, (d+1)\max_{j} a_{jj}^{FE}\cdot (d+1)C_{0}H_{h}\max_{j} a_{jj}^{FE} \nonumber\\
        &    &   + ((d+1)C_{0}H_{h})^2 (\max_{j} a_{jj}^{FE})^{2} \nonumber\\
        &\leq&  (1+C_{0}H_{h})^2  (d+1)^{2}(\max_{j} a_{jj}^{FE})^{2}, \nonumber
\end{eqnarray}
which gives
\begin{equation}
\sigma_{\max}(A_{FV}) \le (1+C_{0}H_{h})  (d+1) \max_{j} a_{jj}^{FE} .
\label{sigma-AFV-1}
\end{equation}

From \cite[Lemma~2.5]{HuKaLa2013} we have
\[
\max_{j} a_{jj}^{FE}  \le C_{\tilde{\nabla}} \max\limits_{j}
\sum_{K \in \omega_j} |K|\,\|(F_{K}^{'})^{-1} \uD_{K} (F_{K}^{'})^{-T}\|_{2}.
\]
Combining this with (\ref{sigma-AFV-1}) we obtain (\ref{eigFV_max}).

For (\ref{eigSFVS_max}),  from Lemma~\ref{lem:diff-1},
(\ref{eq:AFEv-1}), and (\ref{eq:AFEv-2}) we have
\[
\frac{|(a_{jj}^{FV}-a_{jj}^{FE})|}{|a_{jj}^{FE}|}
   \leq \underline{d}^{-1} H_{h},
\]
which implies
\begin{equation}
 |a_{jj}^{FE}| \le \frac{|a_{jj}^{FV}|}{1 - \underline{d}^{-1}H_{h} } .
\label{eq:ajjvsajj}
\end{equation}
Combining this with (\ref{eq:sigma_max_A0}), we get
\[
\sigma_{\max}(A_{FV}) \le \frac{(1+C_{0}H_{h})}{1 - \underline{d}^{-1}H_{h}}  (d+1) \max_{j} a_{jj}^{FV} .
\]
Applying the same procedure for any diagonal scaling $S=(s_{j})$ we can obtain
\[
\max_{j} (s_{j}^{-2}a^{FV}_{jj}) \leq \sigma_{\max}(S^{-1}A_{FV}S^{-1})
        \leq \frac{(1+C_{0}H_{h})}{( 1 - \underline{d}^{-1}H_{h} )} (d + 1)\max_{j} (s_{j}^{-2}a^{FV}_{jj}).
\]
For the Jacobi preconditioning we have $s_{j}^2 = a^{FV}_{jj}$, which gives estimate (\ref{eigSFVS_max}).
\end{proof}

\subsection{Smallest eigenvalue of $(A_{FV}+A_{FV}^{T})/2$}

\begin{lem} \label{lem:PD}
$A_{FV}$ and $(A_{FV}+A_{FV}^{T})/2$ are positive definite when the mesh is sufficiently fine so that
$H_{h} < \underline{d}$.
\end{lem}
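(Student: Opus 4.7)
The plan is to reduce the question to a coercivity estimate for the FVEM bilinear form $a_h(\cdot,\cdot)$ acting on the pair $(v^h,\Pi_h^* v^h)$, and then invoke Lemma~\ref{lem:diff-2} to transfer coercivity from the symmetric FE form $a(\cdot,\cdot)$.

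First I would identify the quadratic form associated with $A_{FV}$. Given $\uv=(v_1,\dots,v_{N_{vi}})^T$, set $v^h=\sum_{j=1}^{N_{vi}} v_j\phi_j\in U^h$. Taking the test function to be the dual-element characteristic function combination $\Pi_h^* v^h$ and unfolding the definitions in (\ref{FVEM-1}) and (\ref{A-1}), one obtains the identity
\begin{equation*}
\uv^T A_{FV}\uv \;=\; a_h(v^h,\Pi_h^* v^h).
\end{equation*}
Since $\uv^T \bigl((A_{FV}+A_{FV}^T)/2\bigr)\uv = \uv^T A_{FV}\uv$, it suffices to prove positivity of this quantity for every nonzero $\uv$.

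Next I would apply Lemma~\ref{lem:diff-2} to bound $a_h(v^h,\Pi_h^*v^h)$ from below by the FE energy. From (\ref{lem:diff-2-1}),
\begin{equation*}
a_h(v^h,\Pi_h^*v^h) \;\ge\; a(v^h,v^h) - H_h\, |v^h|_{1,\Omega}^2.
\end{equation*}
By the lower bound $\underline{d}|\xi|^2 \le \xi^T \uD\xi$ in (\ref{D-1}), the FE bilinear form is coercive:
\begin{equation*}
a(v^h,v^h) \;=\; \int_\Omega (\uD\nabla v^h)\cdot\nabla v^h\,\ud\ux \;\ge\; \underline{d}\,|v^h|_{1,\Omega}^2.
\end{equation*}
Combining these two inequalities yields
\begin{equation*}
\uv^T A_{FV}\uv \;\ge\; \bigl(\underline{d}-H_h\bigr)\,|v^h|_{1,\Omega}^2,
\end{equation*}
which is strictly positive whenever $v^h\not\equiv 0$, thanks to the hypothesis $H_h<\underline{d}$.

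Finally I would close the argument by relating $\uv\ne 0$ to $|v^h|_{1,\Omega}>0$. Because $U^h\subset H^1_0(\Omega)$, the Poincar\'{e} inequality gives $\|v^h\|_{0,\Omega}\le C_P |v^h|_{1,\Omega}$, so $|v^h|_{1,\Omega}=0$ would force $v^h\equiv 0$, hence $\uv=0$ by linear independence of the basis $\{\phi_j\}$. Therefore $\uv^T A_{FV}\uv>0$ for every $\uv\ne 0$, proving that both $A_{FV}$ and its symmetric part are positive definite. I do not anticipate a real obstacle here: the work has been packaged into Lemma~\ref{lem:diff-2}, and the only substantive ingredients remaining are uniform ellipticity of $\uD$ and the linear-independence/Poincar\'{e} step.
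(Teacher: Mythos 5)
Your proposal is correct and follows essentially the same route as the paper's proof: identify $\uv^T A_{FV}\uv = a_h(v^h,\Pi_h^*v^h)$, lower-bound it by $(\underline{d}-H_h)\,|v^h|_{1,\Omega}^2$ via Lemma~\ref{lem:diff-2} and the ellipticity bound (\ref{D-1}), and conclude positivity using Poincar\'{e}'s inequality. The only cosmetic difference is that the paper states the final bound in terms of $\|v^h\|_{L^2(\Omega)}^2$ while you use Poincar\'{e} to argue $|v^h|_{1,\Omega}=0\Rightarrow\uv=0$; these are equivalent ways of closing the argument.
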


\begin{proof}
From (\ref{D-1}) we have
\begin{eqnarray} \label{eq:uAFEu}
a(u^{h},u^{h}) = \sum_{K\in\mathcal{T}_{h}}\int_{K} \nabla u^{h}\cdot(\uD \nabla u^{h}) \ud \ux
                \geq \underline{d}\, |u^{h}|^{2}_{1,\Omega}.
\end{eqnarray}
Then, from Lemma~\ref{lem:diff-2} we have
\begin{eqnarray}
\label{ahuPihu}
a_{h} (u^{h},\Pi_{h}^{*}u^{h}) &\geq& a(u^{h},u^{h}) - |a_{h} (u^{h},\Pi_{h}^{*}u^{h}) -a(u^{h},u^{h})| \nonumber\\
&\geq& (\underline{d} - H_{h})  |u^{h}|_{1,\Omega}^{2}.
\end{eqnarray}
From Poincare's inequality, there exists a constant $\gamma>0$ such that
\begin{eqnarray}
\uu^{T}\frac{A_{FV}+A_{FV}^{T}}{2}\uu = \uu^{T}A_{FV}\uu = a_{h} (u^{h},\Pi_{h}^{*}u^{h})
\geq \gamma (\underline{d} - H_{h}) ||u^{h}||^{2}_{L^{2}(\Omega)}. \nonumber
\end{eqnarray}
Thus, $A_{FV}$ and $(A_{FV}+A_{FV}^{T})/2$ are positive definite when $\underline{d} > H_{h}$.
\end{proof}

\begin{thm}
\label{thm:eigFV_min}
Assume that the mesh is sufficiently fine so that $H_h < \underline{d}$.
The smallest eigenvalue of $(A_{FV}+A_{FV}^{T})/2$
for the linear FVEM approximation of BVP $(\ref{BVP-1})$ is bounded from below by
\begin{equation}
\lambda_{\min}(\frac{A_{FV}+A_{FV}^{T}}{2}) \geq \frac{C \, \underline{d}}{N} \div
\begin{cases}
1, & \mathrm{for}\quad d=1,    \\
(1-\underline{d}^{-1} H_{h}) (1+\ln(\frac{|\overline{K}|}{|K_{min}|})), &   \mathrm{for}\quad d=2,    \\
(1-\underline{d}^{-1} H_{h}) \Big(\frac{1}{N}\sum_{K\in\mathcal{T}_{h}} \Big( \frac{|\overline{K}|}{|K|}
\Big)^{\frac{d-2}{2}}  \Big)^{\frac{2}{d}}, &   \mathrm{for}\quad d\geq3 ,
\end{cases}
\label{eq:eigFV_min}
\end{equation}
where $|\overline{K}|=\frac{1}{N}|\Omega|$ is the average element size and $C$ is a constant independent of
the mesh and the diffusion matrix.
Moreover,
the smallest singular value of the diagonally (Jacobi) preconditioned stiffness matrix is bounded from below by
\begin{equation}
\lambda_{\min}(S^{-1}\frac{A_{FV}+A_{FV}^{T}}{2}S^{-1})\geq \frac{C}{N^{\frac 2 d}} \div
\begin{cases}
\left ( \frac{1}{N \underline{d}} \sum\limits_{K\in\mathcal{T}_{h}}\uD(\ux_K)\frac{|\overline{K}|}{|K|} \right ),
        & \mathrm{for}\quad d=1, \\
\left( \frac{1}{N(\underline{d}-H_{h})} \sum\limits_{K\in\mathcal{T}_{h}}|K|
\,\|(F_{K}^{'})^{-1} \uD_{K}(F_{K}^{'})^{-T}\|_{2} \right) & \\
\qquad \quad \cdot \left (1+ \Big| \ln\frac{\max\limits_{K\in\mathcal{T}_{h}}
\|(F_{K}^{'})^{-1} \uD_{K}(F_{K}^{'})^{-T}\|_{2}}{\sum\limits_{K\in\mathcal{T}_{h}}|K|
\,\|(F_{K}^{'})^{-1} \uD_{K} (F_{K}^{'})^{-T}\|_{2}} \Big| \right ),
            &   \mathrm{for}\quad d=2,  \\
\left( \frac{1}{N(\underline{d}-H_{h})^{\frac{d}{2}}} \sum\limits_{K\in\mathcal{T}_{h}}|K|
\,\|(F_{K}^{'})^{-1} \uD_{K}(F_{K}^{'})^{-T}\|_{2}^{\frac{d}{2}} \right)^{\frac{2}{d}}, & \mathrm{for}\quad d\ge 3 .
\end{cases}
\label{eq:eigFVscaling_min}
\end{equation}
\end{thm}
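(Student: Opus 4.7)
The plan is to reduce both bounds to dimension-dependent lower bounds on the symmetric positive definite FE stiffness matrix $A_{FE}$ (entries given by $(\ref{FE-1})$), which are already known from \cite{KaHuXu2012}, via a single coercivity comparison between the FVE and FE bilinear forms.

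Identifying $\uu \in \mathbb{R}^{N_{vi}}$ with the coefficient vector of $u^h\in U^h$ and combining Lemma~\ref{lem:diff-2} with the ellipticity bound $(\ref{D-1})$, one obtains
\[
\uu^T A_{FV}\uu = a_h(u^h,\Pi_h^* u^h) \ge a(u^h,u^h) - H_h |u^h|_{1,\Omega}^2 \ge \bigl(1 - \underline{d}^{-1} H_h\bigr)\,a(u^h,u^h) = \bigl(1-\underline{d}^{-1}H_h\bigr)\,\uu^T A_{FE}\uu .
\]
Since $\uu^T\tfrac{A_{FV}+A_{FV}^T}{2}\uu = \uu^T A_{FV}\uu$, dividing by $\|\uu\|_2^2$ and taking the infimum over $\uu\ne 0$ yields $\lambda_{\min}\!\bigl((A_{FV}+A_{FV}^T)/2\bigr)\ge(1-\underline{d}^{-1}H_h)\,\lambda_{\min}(A_{FE})$. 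Plugging in the dimension-dependent lower bounds for $\lambda_{\min}(A_{FE})$ from \cite{KaHuXu2012} — the bare $O(\underline{d}/N)$ estimate in one dimension, the extra logarithmic factor $1+\ln(|\overline{K}|/|K_{\min}|)$ in two dimensions, and the discrete $(d-2)/2$-power mean of $|\overline{K}|/|K|$ for $d\ge 3$ — and collecting $\underline{d}(1-\underline{d}^{-1}H_h)=\underline{d}-H_h$ into a single factor yields $(\ref{eq:eigFV_min})$.

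For the Jacobi-preconditioned bound $(\ref{eq:eigFVscaling_min})$ the same numerator inequality is reused, while the denominator $\uu^T S^2 \uu = \sum_j a_{jj}^{FV} u_j^2$ is replaced by its FE analogue through the diagonal comparison $(\ref{eq:ajjvsajj})$: the direction $a_{jj}^{FV}\le(1+\underline{d}^{-1}H_h)\,a_{jj}^{FE}$ gives $\uu^T S^2\uu\le(1+\underline{d}^{-1}H_h)\,\uu^T S_{FE}^2\uu$ with $S_{FE}=(A_{FE}^D)^{1/2}$. Hence
\[
\lambda_{\min}\!\Bigl(S^{-1}\tfrac{A_{FV}+A_{FV}^T}{2}S^{-1}\Bigr)\ge \frac{1-\underline{d}^{-1}H_h}{1+\underline{d}^{-1}H_h}\,\lambda_{\min}\!\bigl(S_{FE}^{-1}A_{FE}S_{FE}^{-1}\bigr),
\]
and the claim reduces to the corresponding FE Jacobi-preconditioned lower bounds from \cite{KaHuXu2012}, with the $(\underline{d}-H_h)$ powers arising again from combining $\underline{d}(1-\underline{d}^{-1}H_h)$.

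The main obstacle is not this reduction, which is a short consequence of Lemma~\ref{lem:diff-2}, $(\ref{D-1})$, and $(\ref{eq:ajjvsajj})$; rather, it is the careful bookkeeping required to import the dimension-dependent FE estimates into the precise form of $(\ref{eq:eigFV_min})$ and $(\ref{eq:eigFVscaling_min})$, in particular the element sums involving $\|(F_{K}^{'})^{-1}\uD_K (F_{K}^{'})^{-T}\|_2$ and the correct powers of $(\underline{d}-H_h)$. Those FE estimates themselves rely on inverse inequalities and a discrete Sobolev-type inequality on the mesh, which is where the logarithm for $d=2$ and the $(d-2)/2$-power mean for $d\ge 3$ come from.
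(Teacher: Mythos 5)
Your reduction is correct in substance, but it is not the paper's argument: the paper does not pass through $\lambda_{\min}(A_{FE})$ as a black box. Instead, it uses Lemma~\ref{lem:diff-2} only to get the coercivity $\uu^T A_{FV}\uu = a_h(u^h,\Pi_h^*u^h)\ge(\underline{d}-H_h)|u^h|_{1,\Omega}^2$ and then re-runs the machinery of \cite{KaHuXu2012} inside the FVE setting: it imports the intermediate discrete Sobolev-type inequality (\ref{uh1norm-0}) with free weights $\alpha_K$, chooses $\alpha_K=|K|^{-2/q}$ (resp. $|K|^{-(d-2)/d}$) and optimizes $q$ for the unscaled bound, and for the Jacobi case chooses the $\uD$-weighted $\alpha_K=|K|^{(q-2)/q}\sum_{i_K}\nabla\phi_{i_K}\cdot(\uD_K\nabla\phi_{i_K})$ together with a direct lower bound on $s_j^2=a_{jj}^{FV}$ obtained from Lemma~\ref{lem:diff-2} with $u^h=\phi_j$; the case $d=1$ is handled by an exact computation. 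Your route — two-sided comparison of the quadratic forms $\uu^TA_{FV}\uu\ge(1-\underline{d}^{-1}H_h)\uu^TA_{FE}\uu$ and $\uu^TS^2\uu\le(1+\underline{d}^{-1}H_h)\uu^T A_{FE}^D\uu$, then citing the final FE eigenvalue bounds of \cite{KaHuXu2012} — is valid (the diagonal comparison follows from the two-sided estimate preceding (\ref{eq:ajjvsajj})), it reproduces the $d\ge2$ bounds of (\ref{eq:eigFV_min}) and (\ref{eq:eigFVscaling_min}) exactly up to the generic constant (the factor $(1+\underline{d}^{-1}H_h)\le 2$ is absorbed into $C$, and $(1-\underline{d}^{-1}H_h)\underline{d}^{d/2}$ combines into $(\underline{d}-H_h)^{d/2}$ as you say), and your placement of the factor $(1-\underline{d}^{-1}H_h)$ multiplying the bound is in fact what the paper's own derivation yields. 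What your shortcut loses is the $d=1$ case as literally stated: there the paper exploits the exact one-dimensional coercivity $a_h(u^h,\Pi_h^*u^h)\ge\underline{d}|u^h|_{1,\Omega}^2$ (no $H_h$ loss) and the exact FVE diagonal $a_{jj}^{FV}=\sum_{K\in\omega_j}\uD(\ux_K)/|K|$ with midpoint values, whereas your uniform reduction introduces an extra $(1-\underline{d}^{-1}H_h)$-type factor and replaces $\uD(\ux_K)$ by the element averages $\uD_K$; this is a mild weakening rather than an error, but it means the $d=1$ statements of the theorem require the paper's dimension-specific treatment (or an additional comparison step) rather than your generic argument. What your approach buys is brevity and a clean separation of the FVE-vs-FE perturbation from the mesh/diffusion analysis, at the price of relying on the scaled FE bounds of \cite{KaHuXu2012} being available in exactly the quoted form, which the paper avoids by keeping the weight-optimization step explicit.
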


\begin{proof}
The proof of this theorem is similar to that of Lemma 5.1 of \cite{KaHuXu2012}
for linear finite element approximation. For completeness, we give the detail of the proof here.

As in \cite{KaHuXu2012}, we need to treat the cases with $d = 1$, $d = 2$, and
$d\ge 3$ separately since the proof is based on Sobolev's inequality
\cite[Theorem 7.10]{Gilbarg01} which
has different forms in these cases. In the following, the function $u^h\in U^{h}$ and
its vector form $\uu=(u_{1},\dots,u_{N_{vi}})^{\mathrm{T}}$ are used synonymously.

\vspace{0.5cm}
\textit{Case} $d=1$. In one dimension, it is known (e.g., see \cite{LCW2000}) that
\[
  \uu^{T} A_{FV} \uu = a_{h}(u^{h},\Pi_{h}^{*}u^{h}) \geq \underline{d}\, | u_{h} |_{1,\Omega}^2.
\]
From Sobolev's inequality and the equivalence of vector norms, we have
\begin{align*}
\uu^{T} \frac{A_{FV}+A_{FV}^{T}}{2} \uu
& =   \uu^{T} A_{FV} \uu
\geq \underline{d}\, | u^{h} |_{1,\Omega}^2
\geq \underline{d}\,C_S \, |\Omega|^{-1} \sup_{\Omega} | u^{h} |^2
\\
& =   \underline{d}\,C_S \, |\Omega|^{-1} \max_{j} u_{j}^{2}
\geq \underline{d}\,C_S \, |\Omega|^{-1} N^{-1} \uu^{T} \uu,
\end{align*}
where $C_S$ is the constant associated with Sobolev's inequality. Thus, we have
\[
\lambda_{\min}((A_{FV}+A_{FV}^{T})/2) \geq C\underline{d} N^{-1},
\]
which gives (\ref{eq:eigFV_min}) (with $d=1$).

With diagonal scaling, we have
\begin{equation}
\label{eq:uSASu1D}
 \uu^{T}S^{-1} \frac{A_{FV}+A_{FV}^{T}}{2} S^{-1}\uu \geq C \underline{d} \max_{j} s_{j}^{-2}u_{j}^{2}
                \geq C \, \underline{d} \, \frac{\sum_{j}u_{j}^{2}}{\sum_{j}s_{j}^{2}}
                \geq C \, \underline{d} \, \frac{ \uu^T \uu }{ \sum_{j}s_{j}^{2} } .
\end{equation}
In one dimension, $\left | \nabla \phi_{j} \right| = |K|^{-1}$ when restricted in $K$. From this and noticing that $\omega_j$
contains at most two elements, we have
\begin{align*}
\sum\limits_j s_{j}^{2} & = \sum\limits_j A^{FV}_{jj}
= \sum\limits_j  \sum_{K\in\omega_{j}} \, -\mathbf{n}_{\ux_K} \cdot (\uD(\ux_K)  \nabla\phi_{j})
\\
& = \sum\limits_j  \sum_{K\in\omega_{j}} \, \frac{\uD(\ux_K)}{|K|}
\le 2 \sum_{K\in\mathcal{T}_h} \frac{\uD(\ux_K)}{|K|} ,
\end{align*}
where $\ux_K$ denotes the centroid of $K$ and $\mathbf{n}_{\ux_K}$ is the outward normal vector from $P_j$ to $\ux_K$.
Substituting this into (\ref{eq:uSASu1D}) we get (\ref{eq:eigFVscaling_min}) (for $d=1$).

\vspace{0.5cm}
\textit{Case} $d=2$.
From the proof of Lemma~5.1 of \cite{KaHuXu2012}, we have
\begin{eqnarray}
\label{uh1norm-0}
|u^{h}|_{1,\Omega}^2 &\geq& C q^{-1}
        \left(\sum_{K\in\mathcal{T}_{h}} \alpha_K^{\frac{q}{q-2}} \right)^{-\frac{q-2}{q}}
        \left ( \sum_{j} u_{j}^2 \sum_{K\in \omega_j} \alpha_K |K|^{\frac{2}{q}}\right ),
\end{eqnarray}
where $\{ \alpha_K,\; K \in \mathcal{T}_h\}$ is an arbitrary set of not-all-zero nonnegative numbers
and $q>2$ is an arbitrary constant. Taking $\alpha_K = |K|^{-2/q}$ gives
\begin{eqnarray}
\label{uh1norm}
|u^{h}|_{1,\Omega}^2 &\geq& C q^{-1}
        \left(\sum_{K\in\mathcal{T}_{h}} |K|^{-\frac{2}{q-2}} \right)^{-\frac{q-2}{q}} \sum_{j} u_{j}^2 .
\end{eqnarray}
Then from (\ref{ahuPihu}) and the above inequality we have
\begin{eqnarray}
\uu^{\mathrm{T}}\frac{A_{FV}+A_{FV}^{T}}{2}\uu   & = & \uu^{\mathrm{T}}A_{FV}\uu
 = a_{h}(u^{h},\Pi_{h}^{*}u^{h}) \nonumber\\
   &\geq& (\underline{d}-H_{h})|u^{h}|_{1,\Omega}^2  \nonumber\\
   &\geq& C (\underline{d}-H_{h}) q^{-1}
        \left(\sum_{K\in\mathcal{T}_{h}} |K|^{-\frac{2}{q-2}} \right)^{-\frac{q-2}{q}} \sum_{j} u_{j}^2
        \nonumber\\
   &\geq& C (\underline{d}-H_{h}) q^{-1}
        \left( N |K_{\min}|^{-\frac{2}{q-2}} \right)^{- \frac{q-2}{q}}  \sum_{j} u_{j}^2 \nonumber\\
   &=&    C (\underline{d}-H_{h}) N^{-1}
        \left[ q^{-1} (N\,|K_{\min}|)^{\frac{2}{q}} \right] \sum_{j} u_{j}^2,
\label{eq:lambdamin}
\end{eqnarray}
where $K_{\min}$ denotes the element with the minimal area. The above bound can be maximized
for $q=\max\{2,|\ln(N\,|K_{\min}|)|\}$ (with $q=2$ being viewed as the limiting case $q\to 2^+$) with
\[
  q^{-1} (N\,|K_{\min}|)^{\frac{2}{q}} \geq \frac{C}{1+|\ln(N\,|K_{\min}|)|}.
\]
Substituting this into (\ref{eq:lambdamin}) and using the definition of the average element size,
we obtain (\ref{eq:eigFV_min}) (with $d = 2$).

With diagonal scaling, we have
\begin{eqnarray}
\uu^{T}S^{-1} \frac{A_{FV}+A_{FV}^{T}}{2} S^{-1}\uu \geq C (\underline{d}-H_{h}) \frac{1}{q}
                  \left(\sum_{K\in\mathcal{T}_{h}} \alpha_{K}^{\frac{q}{q-2}} \right)^{-\frac{q-2}{q}}
                  \left(\sum_{j} u_{j}^2 s_{j}^{-2} \sum_{K\in\omega_{j}}\alpha_{K}|K|^{\frac{2}{q}} \right) .  \nonumber
\end{eqnarray}

For the Jacobi preconditioning $s_{j}^{2}=a^{FV}_{jj}$. With letting $u^{h}=\phi_{j}$ in $(\ref{lem:diff-2-1})$, we have
\begin{eqnarray}
s_{j}^{2}   &=& a^{FV}_{jj}= a^{FE}_{jj}+(a^{FV}_{jj}-a^{FE}_{jj})   \nonumber\\
            &\geq&  a(\phi_{j},\phi_{j}) - |a_{h}(\phi_{j},\Pi_{h}^{*}\phi_{j}) - a(\phi_{j},\phi_{j})| \nonumber\\
            &\geq&  \sum_{K\in \omega_{j}} |K|\,|\nabla\phi_{j}|\,|\nabla\phi_{j}|\, \left(\frac{ \nabla\phi_{j} }{|\nabla\phi_{j}|}\cdot(\uD_{K}\frac{ \nabla\phi_{j} }{|\nabla\phi_{j}|}) - H_{h} \right). \nonumber
\end{eqnarray}
Take
\[
\alpha_{K} = |K|^{\frac{q-2}{q}} \sum_{i_{K}=1}^{d+1} \nabla\phi_{i_{K}} \cdot(\uD_{K}\nabla\phi_{i_{K}})
           = |K|^{\frac{q-2}{q}} \sum_{i_{K}=1}^{d+1} \hat{\nabla}\hat{\phi}_{i_{K}}
                \cdot( (F_{K}^{'})^{-1} \uD_{K} (F_{K}^{'})^{-T} \hat{\nabla}\hat{\phi}_{i_{K}}),
\]
where $\hat{\phi}_{i}$ is a linear basis function and $\hat{\nabla}$ is the gradient operator on
the reference element $\hat{K}$. It is not difficult to show that
\[
s_{j}^{-2} \sum_{K\in\omega_{j}}\alpha_{K}|K|^{\frac{2}{q}}  \geq 1, \qquad
\alpha_{K} \leq (d+1)C_{\hat{\phi}} \, |K|^{\frac{q-2}{q}} \|(F_{K}^{'})^{-1} \uD_{K} (F_{K}^{'})^{-T}\|_{2},
\]
where $C_{\hat{\phi}}=\max_{i_{K}=1,\dots,d+1}\|\hat{\nabla}\hat{\phi}_{i_{K}}\|^{2}$. With these and choosing the value for the index $q$ in a similar manner as for the case without scaling we obtain $(\ref{eq:eigFVscaling_min})$
(for $d = 2$).

\vspace{0.5cm}
\textit{Case} $d\geq3$. Following a similar procedure as for the $d=2$ case, we have
\[
  \uu^{T} \frac{A_{FV}+A_{FV}^{T}}{2} \uu
            \geq C (\underline{d}-H_{h}) \left( \sum_{k\in\mathcal{T}_{h}} \alpha_{K}^{\frac{d}{2}}\right)^{-\frac{2}{d}}
                \sum_{j} u_{j}^{2} \sum_{K\in\omega_{j}} \alpha_{K} |K|^{\frac{d-2}{d}}.
\]
Choosing $\alpha_{K}=|K|^{-\frac{d-2}{d}}$ gives
\[
  \uu^{T} \frac{A_{FV}+A_{FV}^{T}}{2} \uu
            = C (\underline{d}-H_{h}) \left( \sum_{k\in\mathcal{T}_{h}} |K|^{\frac{2-d}{2}} \right)^{-\frac{2}{d}}
                \sum_{j} u_{j}^{2}.
\]
The estimate (\ref{eq:eigFVscaling_min}) for $d \ge 3$
follows from this and the definition of the average element size.

The bound for the diagonally scaled stiffness matrix is obtained by choosing
\[
\alpha_{K} = |K|^{\frac{2}{d}} \sum_{i_{K}=1}^{d+1} \hat{\nabla}\hat{\phi}_{i_{K}}
                \cdot( (F_{K}^{'})^{-1} \uD_{K} (F_{K}^{'})^{-T} \hat{\nabla}\hat{\phi}_{i_{K}}).
\]
\end{proof}

\subsection{Condition number of the stiffness matrix}

\begin{thm}
\label{thm-cond}
The condition number of the stiffness matrix for the linear finite volume element approximation
of homogeneous BVP $(\ref{BVP-1})$ is bounded by
\begin{align}
{\kappa}(A_{FV}) \leq & C\,(1+C_{0}H_{h})\, N^{\frac 2 d} \cdot
\left (\frac{1}{\underline{d} \, N^{\frac{2-d}{d}}}
\max\limits_{j} \sum_{K \in \omega_j} |K|\,\|(F_{K}^{'})^{-1} \uD_{K} (F_{K}^{'})^{-T}\|_{2}\right )
\notag \\
& \qquad \qquad \times
\begin{cases}
1, & \mathrm{for}\quad d=1,    \\
(1-\underline{d}^{-1} H_{h}) (1+\ln(\frac{|\overline{K}|}{|K_{min}|})), &   \mathrm{for}\quad d=2,    \\
(1-\underline{d}^{-1} H_{h}) \Big(\frac{1}{N}\sum_{K\in\mathcal{T}_{h}} \Big( \frac{|\overline{K}|}{|K|}
\Big)^{\frac{d-2}{2}}  \Big)^{\frac{2}{d}}, &   \mathrm{for}\quad d\geq3 ,
\end{cases}
\label{cond-1}
\end{align}
where $H_{h}=\max\limits_{K\in\mathcal{T}_{h}} \ud^2 ( h_{K}^{2}|\uD|_{2,\infty,K} + h_{K}|\uD|_{1,\infty,K})$
and $|\overline{K}|=\frac{1}{N}|\Omega|$ is the average element size.
With the diagonally (Jacobi) preconditioning, the ``condition number'' of the stiffness matrix is bounded by
\begin{align}
& {\kappa}(S^{-1}A_{FV}S^{-1}) \le
\notag \\
& \quad \frac{C\,(1+C_{0}H_{h})\, N^{\frac 2 d}}{(1-\underline{d}^{-1}H_{h})} \times
\begin{cases}
\left ( \frac{1}{N \underline{d}} \sum\limits_{K\in\mathcal{T}_{h}}\uD(\ux_K)\frac{|\overline{K}|}{|K|} \right ),
        & \mathrm{for}\quad d=1, \\
\left( \frac{1}{N(\underline{d}-H_{h})} \sum\limits_{K\in\mathcal{T}_{h}}|K|
\,\|(F_{K}^{'})^{-1} \uD_{K}(F_{K}^{'})^{-T}\|_{2} \right) & \\
\qquad \quad \cdot \left (1+ \Big| \ln\frac{\max\limits_{K\in\mathcal{T}_{h}}
\|(F_{K}^{'})^{-1} \uD_{K}(F_{K}^{'})^{-T}\|_{2}}{\sum\limits_{K\in\mathcal{T}_{h}}|K|
\,\|(F_{K}^{'})^{-1} \uD_{K} (F_{K}^{'})^{-T}\|_{2}} \Big| \right ),
            &   \mathrm{for}\quad d=2,  \\
\left( \frac{1}{N(\underline{d}-H_{h})^{\frac{d}{2}}} \sum\limits_{K\in\mathcal{T}_{h}}|K|
\,\|(F_{K}^{'})^{-1} \uD_{K}(F_{K}^{'})^{-T}\|_{2}^{\frac{d}{2}} \right)^{\frac{2}{d}}, & \mathrm{for}\quad d\ge 3 .
\end{cases}
\label{cond-2}
\end{align}
\end{thm}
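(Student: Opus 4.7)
The bound follows directly by combining the two previous theorems. Since $\kappa(A_{FV})$ is defined as the ratio $\sigma_{\max}(A_{FV})/\lambda_{\min}((A_{FV}+A_{FV}^T)/2)$, my plan is to take the upper bound for the numerator from Theorem~\ref{thm:eigFV_max} and divide it by the lower bound for the denominator from Theorem~\ref{thm:eigFV_min}. The hypothesis $H_h < \underline{d}$ of both theorems holds under the standing assumption of the proof (indeed, Lemma~\ref{lem:PD} is needed for $(A_{FV}+A_{FV}^T)/2$ to be positive definite so that $\lambda_{\min}$ is a meaningful quantity).

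For the unscaled case, I would write
\[
\kappa(A_{FV}) \;\le\; \frac{C_{\tilde{\nabla}}(d+1)(1+C_0 H_h)\,\max_j \sum_{K\in\omega_j}|K|\,\|(F_K')^{-1}\uD_K(F_K')^{-T}\|_2}{\lambda_{\min}((A_{FV}+A_{FV}^T)/2)},
\]
then insert the case-dependent lower bound from Theorem~\ref{thm:eigFV_min}, which contributes a factor of $N/\underline{d}$ times the bracketed case factor. The only nontrivial step is cosmetic: I would rewrite the combined prefactor $N/\underline{d}$ as $N^{2/d}\cdot\big(\underline{d}\,N^{(2-d)/d}\big)^{-1}$, so that the bound takes the form displayed in (\ref{cond-1}), grouping the mesh-size factor $N^{2/d}$ (which matches the condition number of the Laplacian on a uniform mesh) with a second factor that measures mesh nonuniformity with respect to the inverse diffusion metric. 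The $(d+1)$ and $C_{\tilde{\nabla}}$ constants are absorbed into the generic constant $C$.

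For the diagonally scaled case, I would observe that because $S$ is diagonal, the symmetric part of $S^{-1}A_{FV}S^{-1}$ equals $S^{-1}\tfrac{A_{FV}+A_{FV}^T}{2}S^{-1}$, so $\kappa(S^{-1}A_{FV}S^{-1})$ is again the ratio of the two quantities already bounded in Theorems~\ref{thm:eigFV_max} and \ref{thm:eigFV_min}. Plugging in $\sigma_{\max}(S^{-1}A_{FV}S^{-1})\le (1+C_0H_h)(d+1)/(1-\underline{d}^{-1}H_h)$ from (\ref{eigSFVS_max}) and the case-dependent lower bound from (\ref{eq:eigFVscaling_min}) gives exactly (\ref{cond-2}), again after absorbing $(d+1)$ into $C$.

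There is no real obstacle here beyond bookkeeping: the heavy lifting was done in the two previous theorems. The only points to check carefully are that the ``fine mesh'' hypothesis $H_h<\underline{d}$ is assumed implicitly (through the appearance of $1-\underline{d}^{-1}H_h$ and $\underline{d}-H_h$ in the denominators, which require this condition to be meaningful), and that the exponent rearrangement $N/\underline{d}=N^{2/d}\cdot(\underline{d}N^{(2-d)/d})^{-1}$ is carried out correctly in each dimension.
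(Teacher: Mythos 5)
Your proposal is correct and follows exactly the paper's own (one-line) argument: the paper likewise obtains (\ref{cond-1}) and (\ref{cond-2}) by simply combining Theorems~\ref{thm:eigFV_max} and \ref{thm:eigFV_min}, with the same absorption of constants and the same rewriting of $N/\underline{d}$ as $N^{2/d}\,(\underline{d}\,N^{(2-d)/d})^{-1}$. Your added remarks on the fine-mesh hypothesis and on the symmetric part of the scaled matrix are accurate but not needed beyond what the two cited theorems already provide.
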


\begin{proof} The conclusions follow from Theorems~\ref{thm:eigFV_max} and \ref{thm:eigFV_min}.
\end{proof}

The upper bounds in the above theorem also show the effects of the interplay between the mesh geometry and
the diffusion matrix. To see this, we consider $\uD^{-1}$-uniform meshes (a special case
of $\mathbb{M}$-uniform meshes) that are defined essentially as uniform meshes in the metric
specified by $\uD^{-1}$. It is known (e.g., see \cite{HR11}) that
a $\uD^{-1}$-uniform mesh $\mathcal{T}_{h}$ satisfies
\begin{equation}
(F_{K}^{'})^{-1} \uD_{K}(F_{K}^{'})^{-T} = h_{\uD^{-1}}^{-2} I, \quad \forall K \in \mathcal{T}_{h},
\label{M-uniform-1}
\end{equation}
where $h_{\uD^{-1}}$ is the average element size in metric $\uD^{-1}$, i.e.,
\[
h_{\uD^{-1}} = \left ( \frac{1}{N} \sum\limits_{K \in \mathcal{T}_{h}} |K| \,   \det(\uD_{K})^{-\frac 1 2}
\right )^{\frac{1}{d}}.
\]
From (\ref{D-1}) it is not difficult to see
\[
\frac{|\Omega|^{\frac 1 d}}{\sqrt{\overline{d}}} \le N^{\frac 1 d} h_{\uD^{-1}} \le
\frac{|\Omega|^{\frac 1 d}}{\sqrt{\underline{d}}} .
\]
Then, for a $\uD^{-1}$-uniform mesh, combining (\ref{M-uniform-1}) with the above theorem we have,
\begin{align}
{\kappa}(A_{FV}) \leq C\,(1+C_{0}H_{h})\, N^{\frac 2 d} \cdot
\left (N  \max\limits_{j} |\omega_j| \right ) \times
\begin{cases}
1, & \mathrm{for}\quad d=1,    \\
(1-\underline{d}^{-1} H_{h}) (1+\ln(\frac{|\overline{K}|}{|K_{min}|})), &   \mathrm{for}\quad d=2,    \\
(1-\underline{d}^{-1} H_{h}) \Big(\frac{1}{N}\sum_{K\in\mathcal{T}_{h}} \Big( \frac{|\overline{K}|}{|K|}
\Big)^{\frac{d-2}{2}}  \Big)^{\frac{2}{d}}, &   \mathrm{for}\quad d\geq3 ,
\end{cases}
\label{cond-3}
\end{align}
\begin{align}
{\kappa}(S^{-1}A_{FV}S^{-1}) \le \frac{C\,(1+C_{0}H_{h})^2\, N^{\frac 2 d}}{(1-\underline{d}^{-1}H_{h})^2} ,
\label{cond-4}
\end{align}
where $C$ is a constant which depends on $\uD$ but not on the mesh.
From (\ref{cond-3}) we can see that the mesh nonuniformity (in the Euclidean metric) can still
have significant effects on the conditioning of the stiffness matrix even for $\uD^{-1}$-uniform meshes.
Since a mesh cannot in general be uniform in the Euclidean metric and the metric $\uD^{-1}$ simultaneously,
mesh nonuniformity will have effects on the conditioning of the stiffness matrix.
On the other hand, the situation is different for Jacobi preconditioning.
The estimate (\ref{cond-4}) shows that the effects of mesh nonuniformity in the Euclidean metric
is totally eliminated by the preconditioning. In fact, the bound is almost the same as that
for the Laplace operator on a uniform mesh.

The above analysis is consistent with those of
\cite{Huang2014a,HuKaLa2013,KaHu2013b,KaHuXu2012}
for linear finite element discretization.
It also shows the importance of using Jacobi preconditioning and having a mesh that is uniform in the metric
specified by the inverse of the diffusion matrix.

\section{Conditioning of the mass matrix}
\label{SEC:mass}

In this section we discuss the mathematical properties for the mass matrix.
Although this is a topic not directly related to FVEM solution of boundary value problems,
it is useful for FVEM solution of time department and eigenvalue problems; e.g., see
\cite{Huang2014a,HuKaLa2013} for finite element discretization. Moreover,
it is theoretically interesting to know how the interplay between the mesh geometry and
the diffusion matrix affects the conditioning of the mass matrix.

The entices of the mass matrix $M=(M_{ij})$ are given by
\begin{eqnarray}
\label{eq:Mass-1}
M_{ij}  =  \int_{K_{P_{i}}^{*}} \phi_j \ud \ux
        = \sum_{K\in \omega_{ij}}\, |K|\, \int_{K\cap K_{P_{i}}^{*}} \phi_j \ud \ux
        = \left\{
                \begin{array}{lr}
                    m_{1} \,|\omega_{i}|, & i=j\\
                    m_{2} \, |\omega_{ij}|, & i\not=j
                \end{array}\right.,
                \quad i, j = 1, ..., N_{vi},
\end{eqnarray}
where $m_1$ and $m_2$ are given by (see Appendix A for the derivation)
\begin{equation}
m_{1} = \frac{1}{(d+1)^3} \left( 1+(d+1)\sum_{i=1}^{d}\frac{1}{i} \right),  \qquad
m_{2} = \frac{1}{d(d+1)^3} \left( d^2+2d-(d+1)\sum_{i=1}^{d}\frac{1}{i} \right).
\label{m1-m2}
\end{equation}
For $d = 1$, 2, and 3, we have
\[
m_{1} = \begin{cases}
 3/8, & d=1,\\
 11/54, & d=2,\\
 25/192, & d=3,
\end{cases} \qquad
m_{2} = \begin{cases}
 1/8, & d=1,\\
 7/108, & d=2,\\
 23/576, & d=3 .
\end{cases}
\]
Obviously, $M$ is symmetric. Moreover, denote the local mass matrix on $K$ by $M_K$
and that on $\hat{K}$ by $M_{\hat{K}}$. Then,
\[
(M_{\hat{K}})_{ij} = \int_{ \hat{K}_{P_{i}}^{*}\cap \hat{K} } \hat{\phi}_j \ud \boldsymbol{\xi} =
\begin{cases}
 m_{1}, & i=j,\\
 m_{2}, & i\not=j,
\end{cases}\qquad   i,j=1,\dots,d+1.
\]
It can also be shown that
\begin{eqnarray}
\label{eq:eigM}
(m_{1}-m_{2})I =  \frac{1}{(d+1)^2} \left( (d+1)\sum_{i=1}^{d}\frac{1}{i} - d\right) I
\leq M_{\hat{K}} \leq (m_1 + d m_2)I  = \frac{1}{d +1} I .
\end{eqnarray}
Then, for any vector $\uu$, letting $\boldsymbol{u}_{K}$ be the restriction
of the vector $\boldsymbol{u}$ on $K$ we have
\begin{eqnarray}  \label{eq:Masspositive}
\uu^{T}M\uu &=&     \sum_{K\in \mathcal{T}_{h}} \uu_{K}^{T}M_{K}\uu_{K}
             =      \sum_{K\in \mathcal{T}_{h}} |K|\, \uu_{K}^{T}M_{\hat{K}}\uu_{K}   \nonumber\\
            &\geq&  \sum_{K\in \mathcal{T}_{h}} (m_{1}-m_{2})\,|K|\, \|\uu_{K}\|^{2}
             =      \sum_{i} \left(  (m_{1}-m_{2})\, u_{i}^{2} \sum_{K\in\omega_{i}} |K| \right) \nonumber\\
            &\geq&     (m_{1}-m_{2})\,  \|\uu\|^{2}\, \min\limits_{i} |\omega_{i}|.
\end{eqnarray}
Thus, $M$ is also positive definite.

\subsection{Condition number of the mass matrix}

\begin{thm}
\label{thm:kappaM}
The condition number of the mass matrix for the linear FVEM on a simplicial mesh is bounded by
\begin{eqnarray}
 \frac{|\omega_{\max}|}{|\omega_{\min}|}
\leq \kappa(M) \leq \frac{1}{(d+1)(m_{1}-m_2)}
\frac{|\omega_{\max}|}{|\omega_{\min}|} .
\label{eq:kappaM}
\end{eqnarray}
\end{thm}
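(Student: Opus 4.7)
The plan is to establish the two bounds separately by exploiting the structure of $M$ revealed in the lead-up to the theorem, namely the symmetry, positive definiteness, the extremal eigenvalue bounds for the reference-element mass matrix $M_{\hat K}$ in (\ref{eq:eigM}), and the Rayleigh quotient. Since $M$ is symmetric and positive definite, $\kappa(M) = \lambda_{\max}(M)/\lambda_{\min}(M)$, and the problem reduces to pinning down these two extremal eigenvalues in terms of $|\omega_{\max}|$ and $|\omega_{\min}|$.

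For the upper bound I would first produce an upper bound for $\lambda_{\max}(M)$. Using the element-by-element assembly $\uu^{T} M \uu = \sum_{K\in\mathcal{T}_h} |K|\, \uu_{K}^{T} M_{\hat{K}} \uu_{K}$ as in (\ref{eq:Masspositive}) and applying the upper bound $M_{\hat K}\leq \frac{1}{d+1}I$ from (\ref{eq:eigM}), the sum collapses to $\frac{1}{d+1}\sum_{i}u_i^2 |\omega_i| \leq \frac{|\omega_{\max}|}{d+1}\|\uu\|^2$, whence $\lambda_{\max}(M) \leq \frac{|\omega_{\max}|}{d+1}$. For the lower bound on $\lambda_{\min}(M)$, the estimate $\lambda_{\min}(M) \geq (m_1-m_2)|\omega_{\min}|$ follows directly from (\ref{eq:Masspositive}). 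Taking the ratio yields $\kappa(M) \leq \frac{1}{(d+1)(m_1-m_2)}\cdot \frac{|\omega_{\max}|}{|\omega_{\min}|}$, which is the desired upper bound.

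For the lower bound on $\kappa(M)$, I would use the standard basis vectors in the Rayleigh quotient: plugging $\ue_i$ into $\uu^T M \uu /\uu^T \uu$ gives $M_{ii}=m_1 |\omega_i|$. Choosing $i$ to be the vertex with $|\omega_i|=|\omega_{\max}|$ yields $\lambda_{\max}(M) \geq m_1|\omega_{\max}|$, and choosing the vertex with $|\omega_i|=|\omega_{\min}|$ yields $\lambda_{\min}(M) \leq m_1|\omega_{\min}|$. Dividing gives $\kappa(M) \geq |\omega_{\max}|/|\omega_{\min}|$.

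I do not anticipate a genuine obstacle here; the entire argument is a short Rayleigh-quotient manipulation once the reference-element spectrum (\ref{eq:eigM}) is in hand. The only mild care needed is keeping track of the fact that the Rayleigh-quotient upper bound on $\lambda_{\min}$ and lower bound on $\lambda_{\max}$ may be attained at different vertices, but this does not affect the final ratio because the common factor $m_1$ cancels.
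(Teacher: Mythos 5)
Your proposal is correct and follows essentially the same route as the paper: both use the element-wise assembly together with the reference-element spectral bounds $(m_1-m_2)I \le M_{\hat K} \le \frac{1}{d+1}I$ for the upper bound on $\kappa(M)$, and standard basis vectors in the Rayleigh quotient (giving $\lambda_{\max}(M)\ge \max_i M_{ii}$, $\lambda_{\min}(M)\le \min_i M_{ii}$ with $M_{ii}=m_1|\omega_i|$) for the lower bound. The only difference is cosmetic: the paper phrases the intermediate bounds in terms of $\max_i M_{ii}/\min_i M_{ii}$ and converts to $|\omega_{\max}|/|\omega_{\min}|$ at the end, whereas you substitute $|\omega_i|$ directly.
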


\begin{proof}
From (\ref{eq:eigM}), we have
\begin{eqnarray}
   \boldsymbol{u}^{T} M \boldsymbol{u}
   &=&\sum\limits_{K\in\mathcal{T}_{h}} \boldsymbol{u}_{K}^{T} M_{K} \boldsymbol{u}_{K}
       =\sum\limits_{K\in\mathcal{T}_{h}} |K| \boldsymbol{u}_{K}^{T} M_{\hat{K}} \boldsymbol{u}_{K}  \nonumber \\
   &\leq&  (m_1 + d \, m_2) \sum\limits_{K\in\mathcal{T}_{h}} |K| \|\boldsymbol{u}_{K}\|_{2}^{2} \nonumber \\
   &=&  \frac{1}{d+1} \sum\limits_{K\in\mathcal{T}_{h}} |K| \|\boldsymbol{u}_{K}\|_{2}^{2}.  \nonumber
\end{eqnarray}
Rearranging the sum on the right-hand side according to the vertices and using (\ref{eq:Mass-1}), we get
\begin{equation}
   \boldsymbol{u}^{T} M \boldsymbol{u}
   \leq  \frac{1}{d+1} \sum\limits_{K\in\mathcal{T}_{h}} |K| \|\boldsymbol{u}_{K}\|_{2}^{2}
   = \frac{1}{d+1}\sum\limits_{i} u_{i}^{2}|\omega_{i}|
   = \frac{1}{m_{1} (d+1)} \sum\limits_{i} u_{i}^{2} M_{ii},
\label{eq:uTMu}
\end{equation}
which implies
\[
  \lambda_{\max}(M) \leq  \frac{1}{m_{1}(d+1)} \max\limits_{i} M_{ii}.
\]
Similarly, we have
\begin{equation}
   \lambda_{\min}(M) \geq  \frac{m_{1}-m_{2}}{m_{1}} \min\limits_{i} M_{ii}.
\label{eq:uTMu_lowerbound}
\end{equation}
Moreover, it is not difficult to see that
\[
   \lambda_{\max}(M) \geq  \max\limits_{i} M_{ii},
   \qquad
   \lambda_{\min}(M) \leq  \min\limits_{i} M_{ii}.
\]
Combining the above estimates gives rise to
\begin{eqnarray}
  \max\limits_{i} M_{ii} \leq \lambda_{\max}(M) \leq  \frac{1}{m_{1} (d+1)} \max\limits_{i} M_{ii}, \nonumber\\
  \frac{m_{1}-m_{2}}{m_{1}} \min\limits_{i} M_{ii} \leq \lambda_{\min}(M) \leq  \min\limits_{i} M_{ii},    \nonumber
\end{eqnarray}
which lead to
\[
\frac{\max\limits_{i} M_{ii}}{\min\limits_{i} M_{ii}} \le \kappa(M)
\le \frac{1}{(m_{1}-m_2)(d+1)} \frac{\max\limits_{i} M_{ii}}{\min\limits_{i} M_{ii}} .
\]
From (\ref{eq:Mass-1}) we obtain (\ref{eq:kappaM}).
\end{proof}

The theorem shows that $\kappa(M)=\mathcal{O}(1)$ when the mesh is uniform or close to being uniform.
However, when the mesh is nonuniform, the condition number of $M$ can be very large.

\subsection{Diagonal scaling for the mass matrix}

For any diagonal scaling $S=(s_{i})$, like Theorem~\ref{thm:kappaM} we can obtain
\begin{eqnarray}
\frac{\max\limits_{i} s_{i}^{-2} M_{ii}}{\min\limits_{i} s_{i}^{-2} M_{ii}}
 \leq \kappa(S^{-1}MS^{-1})
 \leq \frac{1}{(m_{1}-m_{2})(d+1)} \frac{\max\limits_{i} s_{i}^{-2} M_{ii}}{\min\limits_{i} s_{i}^{-2} M_{ii}}. \label{eq:kappaSMS}
\end{eqnarray}
For the Jacobi preconditioning $s_{i}^{2}=M_{ii}$, we have the following theorem.

\begin{thm} \label{thm:kappaSMS}
The condition number of the Jocobi preconditioned FVEM mass matrix with a simplicial mesh
has a mesh-independent bound,
\begin{eqnarray}
\kappa(S^{-1}MS^{-1}) \leq \frac{1}{(m_{1}-m_{2})(d+1)}. \nonumber 
\end{eqnarray}
\end{thm}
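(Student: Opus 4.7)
The plan is to derive Theorem~\ref{thm:kappaSMS} as an immediate corollary of the bound~(\ref{eq:kappaSMS}) already established for a general diagonal scaling. Since (\ref{eq:kappaSMS}) was obtained by repeating the argument of Theorem~\ref{thm:kappaM} with $M_{ii}$ replaced by $s_i^{-2} M_{ii}$ throughout (using the same local bounds $(m_1 - m_2) I \le M_{\hat K} \le (m_1 + d m_2) I$ from (\ref{eq:eigM}) and the local assembly identity $\uu^T M \uu = \sum_K |K|\, \uu_K^T M_{\hat K} \uu_K$), there is nothing further to prove about the bilinear form; the entire task reduces to specializing the scaling.

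First I would recall (\ref{eq:kappaSMS}), which applies to any positive diagonal matrix $S = (s_i)$. Then I would substitute the Jacobi choice $s_i^2 = M_{ii}$, so that $s_i^{-2} M_{ii} = 1$ for every interior index $i$. This makes both the maximum and the minimum in the leading ratio equal to $1$, hence
\[
\frac{\max_i s_i^{-2} M_{ii}}{\min_i s_i^{-2} M_{ii}} = 1.
\]
Plugging this into (\ref{eq:kappaSMS}) yields the claimed bound
\[
\kappa(S^{-1} M S^{-1}) \le \frac{1}{(m_1 - m_2)(d+1)},
\]
and the right-hand side is a pure function of the spatial dimension $d$ through the explicit formulas~(\ref{m1-m2}), so it is manifestly independent of the mesh.

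There is no substantive obstacle in this step: the mesh-dependent factor $|\omega_{\max}|/|\omega_{\min}|$ that appeared in Theorem~\ref{thm:kappaM} is exactly what the Jacobi scaling is designed to absorb, because the diagonal entry $M_{ii} = m_1 |\omega_i|$ scales linearly with the local patch volume. The only thing worth double-checking is that the general scaled inequality (\ref{eq:kappaSMS}) is indeed valid with the same constant $\tfrac{1}{(m_1-m_2)(d+1)}$ that was derived in the unscaled case; this follows because the proof of Theorem~\ref{thm:kappaM} uses (\ref{eq:eigM}) element by element and the substitution $u_i \mapsto s_i^{-1} u_i$ preserves that per-element argument verbatim. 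Hence the theorem follows in one line after the setup.
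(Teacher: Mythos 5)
Your proposal is correct and is exactly the paper's own (implicit) argument: the paper states the general diagonally scaled bound (\ref{eq:kappaSMS}) and then obtains Theorem~\ref{thm:kappaSMS} by the Jacobi choice $s_i^2 = M_{ii}$, which makes the ratio $\max_i s_i^{-2}M_{ii}/\min_i s_i^{-2}M_{ii}$ equal to $1$. Nothing further is needed.
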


The results in Theorems~\ref{thm:kappaM} and \ref{thm:kappaSMS} are similar to those results for linear FEM;
e.g., see \cite{KaHuXu2012,Wathen87}.

\subsection{Diagonal and lump of the mass matrix}

\begin{lem}
\label{lemma_MD}
The linear FVEM mass matrix $M$ and its diagonal part $M_{D}$ satisfy
\begin{eqnarray}
 \frac{m_{1}-m_{2}}{m_{1}} M_{D} \leq M\leq  \frac{1}{m_{1}(d+1)} M_{D},
 \label{eq:MD}
\end{eqnarray}
where the less-than-or-equal sign is in the sense of semi negative definiteness.
\end{lem}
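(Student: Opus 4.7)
The plan is to mimic exactly the strategy used in the proof of Theorem~\ref{thm:kappaM}, but stop at the intermediate step rather than passing to $\max_i M_{ii}/\min_i M_{ii}$. The key observation is that the diagonal entry $M_{ii} = m_1 |\omega_i|$ is proportional to the patch volume, which is precisely the quantity that appears when the element-wise bounds on $M_{\hat K}$ from (\ref{eq:eigM}) are regrouped by vertices.

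First, for an arbitrary vector $\uu$, I would write
\[
\uu^T M \uu \;=\; \sum_{K\in\mathcal{T}_h} |K|\,\uu_K^T M_{\hat K}\uu_K ,
\]
exactly as in (\ref{eq:Masspositive}) and (\ref{eq:uTMu}). Then I apply the spectral sandwich (\ref{eq:eigM}), namely $(m_1-m_2) I \le M_{\hat K} \le (m_1 + d m_2) I = \tfrac{1}{d+1} I$, to obtain the two-sided bound
\[
(m_1-m_2)\sum_{K\in\mathcal{T}_h} |K|\,\|\uu_K\|_2^2 \;\le\; \uu^T M \uu \;\le\; \frac{1}{d+1}\sum_{K\in\mathcal{T}_h} |K|\,\|\uu_K\|_2^2 .
\]

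Next I regroup the right-hand sums by vertices, using $\sum_{K\in\mathcal{T}_h}|K|\,\|\uu_K\|_2^2 = \sum_i u_i^2 |\omega_i|$, and invoke $M_{ii}=m_1|\omega_i|$ from (\ref{eq:Mass-1}) to convert $|\omega_i|$ back into $M_{ii}/m_1$. This yields
\[
\frac{m_1-m_2}{m_1}\sum_i u_i^2 M_{ii} \;\le\; \uu^T M \uu \;\le\; \frac{1}{m_1(d+1)}\sum_i u_i^2 M_{ii} ,
\]
which is exactly $\tfrac{m_1-m_2}{m_1}\,\uu^T M_D \uu \le \uu^T M \uu \le \tfrac{1}{m_1(d+1)}\,\uu^T M_D \uu$ for every $\uu$. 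Since this holds for all $\uu$, the inequality (\ref{eq:MD}) in the sense of symmetric matrices follows.

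There is no genuine obstacle here: all of the machinery (the reference-element spectrum of $M_{\hat K}$, the patch-regrouping identity, and the explicit value of the diagonal) has already been established in Section~\ref{SEC:mass}. The only minor care needed is to verify that $(m_1+d\,m_2)$ really equals $1/(d+1)$, but this is recorded in (\ref{eq:eigM}) and follows from the explicit formulas (\ref{m1-m2}) since $\sum_{j}\int_{\hat K\cap \hat K_{P_i}^*}\hat\phi_j\,\ud\boldsymbol{\xi}$ integrated over all row indices yields $|\hat K|/(d+1)$.
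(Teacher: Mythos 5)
Your proposal is correct and follows essentially the same route as the paper: the paper's proof of Lemma~\ref{lemma_MD} simply cites (\ref{eq:uTMu}) and (\ref{eq:uTMu_lowerbound}), which are obtained exactly by your argument of applying the spectral sandwich (\ref{eq:eigM}) elementwise, regrouping $\sum_{K}|K|\,\|\uu_K\|_2^2=\sum_i u_i^2|\omega_i|$ by vertices, and using $M_{ii}=m_1|\omega_i|$ from (\ref{eq:Mass-1}). No gaps.
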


\begin{proof}
This follows from (\ref{eq:uTMu}) and (\ref{eq:uTMu_lowerbound}) directly.
\end{proof}

\begin{lem} \label{lemma:Miilump}
Let $M_{lump}$ be the lumped linear FVEM mass matrix defined through
\begin{eqnarray*}
  M_{ii,lump} = \int_{K^{*}_{P_{i}}}  \sum\limits_{j=1}^{N_{vi}} \phi_{j}(\ux) \ud \ux, \quad i=1,\dots,N_{vi}.
\end{eqnarray*}
Then
\begin{eqnarray}
  m_{1}\,|\omega_{i}| \leq M_{ii,lump} \leq \frac{|\omega_{i}|}{d+1}. \label{eq:Miilump}
\end{eqnarray}
\end{lem}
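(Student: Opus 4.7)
The plan is to rewrite the lumped entry as a row sum of a slightly extended mass matrix and then bound that row sum from above and below using the formulas already derived in (\ref{eq:Mass-1}).

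First I would observe that, by linearity,
\[
M_{ii,lump} \;=\; \sum_{j=1}^{N_{vi}} \int_{K^*_{P_i}} \phi_j \, \ud \ux \;=\; \sum_{j=1}^{N_{vi}} M_{ij},
\]
so $M_{ii,lump}$ is just the $i$-th row sum of $M$ restricted to interior vertices. For the lower bound I would use (\ref{eq:Mass-1}) together with $m_{2}>0$ (visible from (\ref{m1-m2})): every off-diagonal term $M_{ij} = m_2\,|\omega_{ij}|$ is non-negative, so dropping them gives $M_{ii,lump}\ge M_{ii} = m_1\,|\omega_i|$.

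For the upper bound I would extend the sum to include boundary vertices. Let $\tilde M_{ij} = \int_{K^*_{P_i}}\phi_j \ud \ux$ for all vertices $j=1,\dots,N_v$; by (\ref{eq:Mass-1}) these entries are again non-negative, so
\[
M_{ii,lump} \;\le\; \sum_{j=1}^{N_{v}} \tilde M_{ij} \;=\; \int_{K^*_{P_i}} \Bigl(\sum_{j=1}^{N_v}\phi_j\Bigr)\ud\ux \;=\; \int_{K^*_{P_i}} 1\,\ud\ux \;=\; |K^*_{P_i}|,
\]
where I used the full partition-of-unity identity $\sum_{j=1}^{N_v}\phi_j\equiv 1$ of the nodal linear basis. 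Thus the claim reduces to showing $|K^*_{P_i}| = |\omega_i|/(d+1)$, i.e., that in each primary element $K\in\omega_i$ the local dual piece $K^*_{P_i}\cap K$ occupies exactly $1/(d+1)$ of the volume of $K$. Summing over $K\in\omega_i$ then gives the required bound.

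The one genuinely geometric step — and the main obstacle — is verifying $|K^*_{P_i}\cap K| = |K|/(d+1)$ for arbitrary dimension. My approach would be to pass to the reference equilateral simplex $\hat K$: the construction of the dual partition (connecting centroids of faces of all dimensions) is invariant under the symmetric group on the $d+1$ vertices of $\hat K$, so the $d+1$ dual pieces of $\hat K$ are permutations of one another and therefore all have volume $|\hat K|/(d+1)$. Since the dual construction is affine-covariant and the affine map $F_K:\hat K\to K$ rescales every sub-volume by the same factor $|K|/|\hat K|$, the identity $|K^*_{P_i}\cap K| = |K|/(d+1)$ follows on each $K$, completing the proof.
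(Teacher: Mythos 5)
Your proposal is correct and follows essentially the same route as the paper: both bound the integrand between $\phi_i$ and the (partition-of-unity) constant $1$ over $K^*_{P_i}$, giving $M_{ii}\le M_{ii,lump}\le |K^*_{P_i}|$, and then use $M_{ii}=m_1|\omega_i|$ from (\ref{eq:Mass-1}) together with $|K^*_{P_i}\cap K|=|K|/(d+1)$. The only difference is that you spell out a symmetry/affine-covariance proof of the dual-cell volume identity $|K^*_{P_i}\cap K|=|K|/(d+1)$, which the paper simply uses as a known property of the barycentric dual mesh (cf.\ Appendix A); that addition is sound but not a different method.
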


\begin{proof}
Since
\begin{eqnarray*}
  \phi_{i}(\ux) \leq  \sum_{j=1}^{N_{vi}}\phi_{j}(\ux) \leq 1.
\end{eqnarray*}
With (\ref{eq:Mass-1}), we have
\begin{eqnarray*}
  M_{ii,lump} \geq \int_{K^{*}_{P_{i}}} \phi_{i}(\ux) \ud \ux
  = M_{ii}=m_{1} |\omega_{i}|
\end{eqnarray*}
and
\begin{eqnarray*}
  M_{ii,lump} \leq \int_{K^{*}_{P_{i}}} 1 \ud \ux
  = \frac{|\omega_{i}|}{d+1} .
\end{eqnarray*}
\end{proof}

\begin{lem}
The linear FVEM mass matrix $M$ and the lumped mass matrix $M_{lump}$ satisfy
\begin{eqnarray}
  (d+1)(m_{1}-m_{2})M_{lump} \leq M \leq \frac{1}{m_{1} (d+1)}M_{lump}.  \nonumber
\end{eqnarray}
\end{lem}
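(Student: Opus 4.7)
The plan is to derive the double-sided bound by chaining together the two immediately preceding lemmas, using the diagonal part $M_D$ of $M$ as the intermediate object. Since the ordering $\leq$ is in the sense of semi-definiteness and all matrices involved ($M$, $M_D$, $M_{lump}$) are symmetric positive definite with $M_D$ and $M_{lump}$ diagonal, the comparisons reduce to scalar inequalities on corresponding diagonal entries where appropriate.

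First I would rewrite the bounds from Lemma \ref{lemma:Miilump} in terms of $M_D$. Since by (\ref{eq:Mass-1}) the diagonal entries of $M$ satisfy $M_{ii} = m_1 |\omega_i|$, the estimate $m_1 |\omega_i| \leq M_{ii,lump} \leq |\omega_i|/(d+1)$ becomes
\begin{equation*}
m_1 (d+1)\, M_{ii,lump} \;\leq\; M_{ii} \;\leq\; M_{ii,lump},\qquad i=1,\dots,N_{vi}.
\end{equation*}
Because $M_D$ and $M_{lump}$ are both diagonal, this is exactly the matrix inequality
\begin{equation*}
m_1 (d+1)\, M_{lump} \;\leq\; M_D \;\leq\; M_{lump}.
\end{equation*}

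Next I would feed this into Lemma \ref{lemma_MD}, which states $\tfrac{m_1-m_2}{m_1} M_D \leq M \leq \tfrac{1}{m_1(d+1)} M_D$. For the upper bound, $M \leq \tfrac{1}{m_1(d+1)} M_D \leq \tfrac{1}{m_1(d+1)} M_{lump}$, which is the right half of the claim. For the lower bound, $M \geq \tfrac{m_1-m_2}{m_1} M_D \geq \tfrac{m_1-m_2}{m_1} \cdot m_1 (d+1)\, M_{lump} = (d+1)(m_1-m_2)\, M_{lump}$, which is the left half of the claim.

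There is no real obstacle here beyond bookkeeping: the step that deserves a sentence of justification is the passage from the pointwise comparison of diagonal entries to the semi-definite inequality $m_1(d+1) M_{lump} \leq M_D \leq M_{lump}$, which holds because both sides are diagonal with nonnegative entries. All constants arising from Lemmas \ref{lemma_MD} and \ref{lemma:Miilump} cancel cleanly to yield the stated factors $(d+1)(m_1-m_2)$ and $1/(m_1(d+1))$, so no further estimation is required.
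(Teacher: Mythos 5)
Your proof is correct and follows essentially the same route as the paper: both arguments chain Lemma~\ref{lemma_MD} with Lemma~\ref{lemma:Miilump} through the diagonal matrix $M_D$, using $M_{ii}=m_1|\omega_i|$ to convert the bounds on $M_{ii,lump}$ into the comparison $m_1(d+1)M_{lump}\leq M_D\leq M_{lump}$ and then multiplying through by the constants from Lemma~\ref{lemma_MD}. The only difference is presentational (you consolidate the diagonal comparisons into one two-sided matrix inequality before chaining), so no further comment is needed.
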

\begin{proof}
Since $M_{D}\leq M_{lump}$, we get the upper bound directly from (\ref{eq:MD}). The lower bound in (\ref{eq:MD}) together with the upper bound in (\ref{eq:Miilump}) give the lower bound
\begin{eqnarray}
  M \geq \frac{m_{1}-m_{2}}{m_{1}} M_{D} = \frac{m_{1}-m_{2}}{m_{1}}m_{1}{\mathrm{diag}}(|\omega_{1}|,\dots,|\omega_{N_{vi}}|) \geq (d+1)(m_{1}-m_{2})M_{lump}.   \nonumber
\end{eqnarray}
\end{proof}

\section{Numerical examples}
\label{sec:numerical}

In this section we present numerical results for a selection of $d$-dimensional ($d=1,\, 2,\, 3$)
examples to illustrate the theoretical results obtained in the previous sections.
Note that all bounds on the smallest eigenvalue $\lambda_{min}((A_{FV}+A_{FV}^{T})/2)$
(cf. Theorem~\ref{thm:eigFV_min})
contain a constant $C$. We obtain its value by calibrating the bound with uniform meshes
through comparing the exact and estimated values. For the largest singular value
$\sigma_{\max}(A_{FV})$ we use explicit bounds (\ref{eigFV_max}) and (\ref{eigSFVS_max})
where analytical expressions are available for the constants.
Predefined meshes are used to demonstrate the influence of the number and shape of mesh elements
on the condition number of the stiffness matrix and to verify the improvement achieved with the diagonal scaling.
The first three examples are adopted from \cite{KaHuXu2012}.
The results presented here for these examples are comparable with those obtained
in \cite{KaHuXu2012} with a linear finite element discretization.

\begin{exam}
\label{exam_1}
This is a one-dimensional example with $\uD = 1 + \exp(x^5)$ and a mesh given by Chebyshev nodes
in the interval [0, 1],
\[
x_{i}=\frac{1}{2} \left( 1-\cos \frac{\pi(2i-1)}{2(N-1)} \right),\quad i=1,\dots,N-1.
\]
The exact condition number of the stiffness matrix and its estimates (\ref{cond-1}) and (\ref{cond-2}) are shown in Fig.~\ref{fig:Example1D}(a). The exact $\sigma_{\max}(A_{FV})$ and $\lambda_{min}((A_{FV}+A_{FV}^{T})/2)$
and their estimates (\ref{eigFV_max}) and (\ref{eq:eigFV_min}) are shown in Fig.~\ref{fig:Example1D}(b).
The results show that the estimates have the same asymptotic order as the corresponding
exact values as N increases. Moreover, they show that the Jacobian preconditioning has significant impacts
on the condition number. Not only is ${\kappa}(S^{-1}A_{FV}S^{-1})$ significantly lower than
${\kappa}(A_{FV})$ but also it has a lower order than the latter does as $N$ increases.
\qed
\end{exam}

\begin{figure}[!htbp]
\centering
\begin{subfigure}{0.45\textwidth}
\centering
\includegraphics[scale = 0.18]{./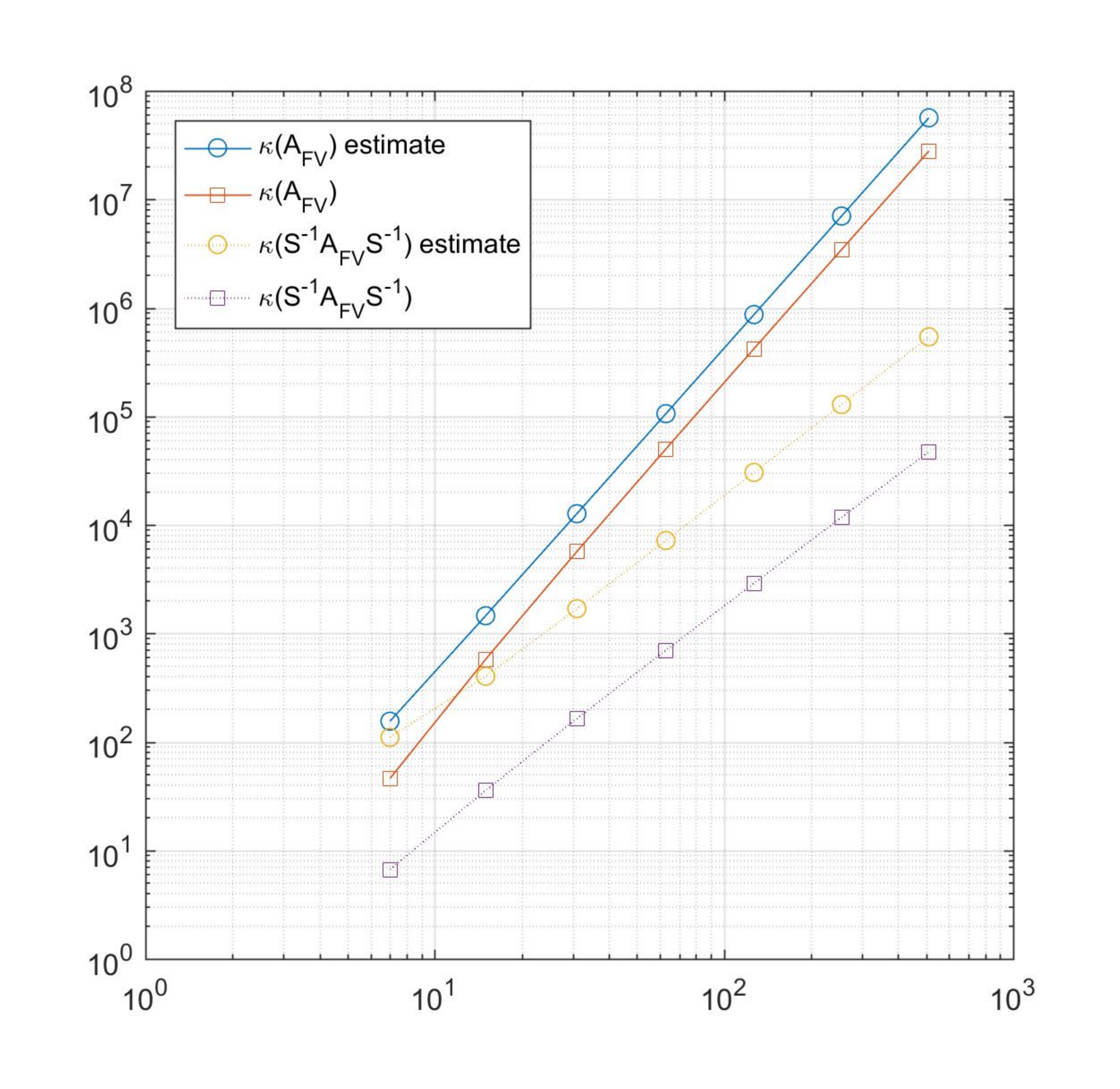}
\caption{ }
\end{subfigure}
\begin{subfigure}{0.45\textwidth}
\centering
\includegraphics[scale = 0.18]{./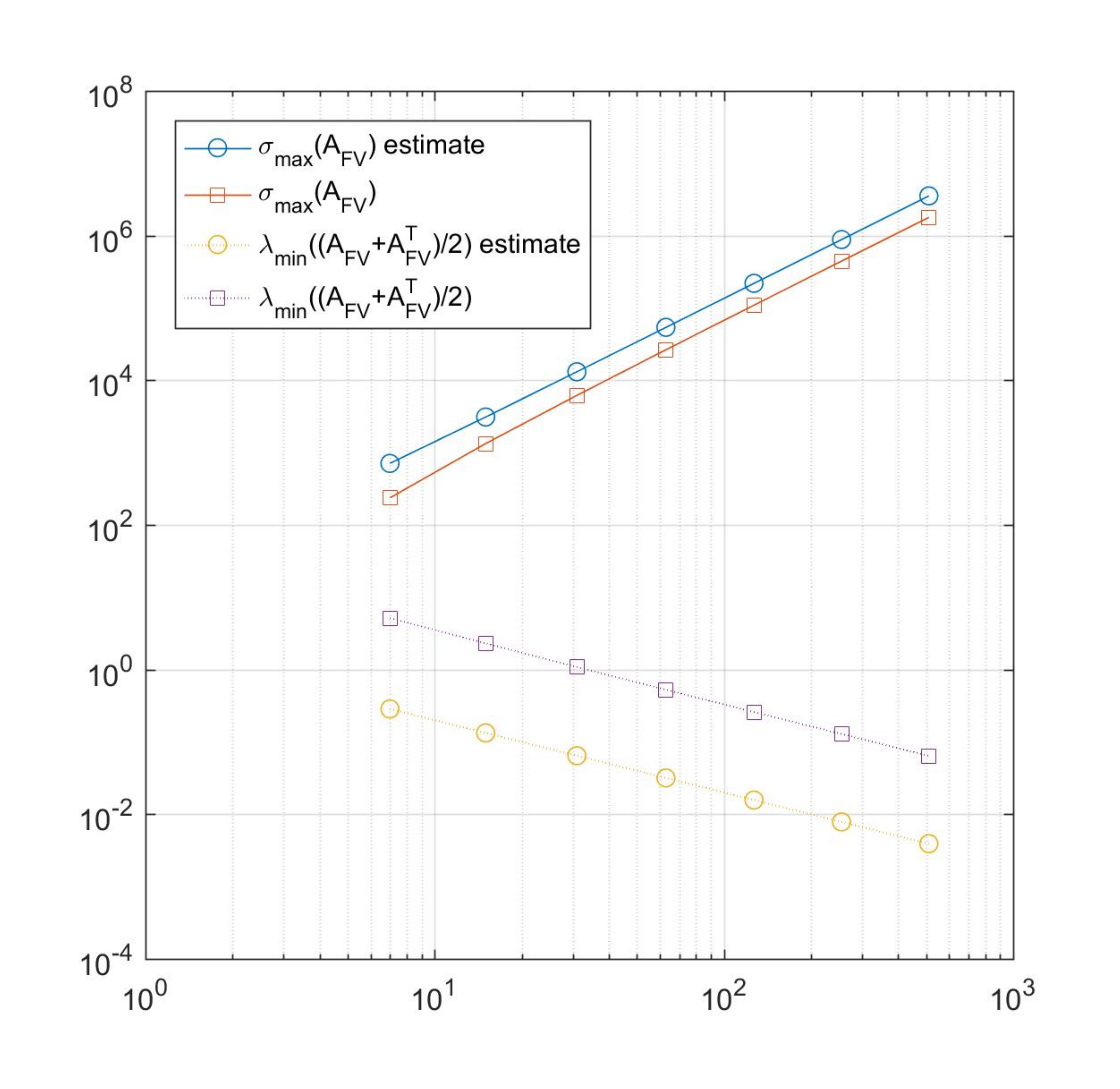}
\caption{}
\end{subfigure}
\caption{Example~\ref{exam_1}. Exact and estimated condition number (left) and exact and estimated greatest
singular value (eigenvalue) (right) of the stiffness matrix as a function of $N$ ($d = 1$).
}
\label{fig:Example1D}
\end{figure}

\begin{exam}
\label{exam_2}
In this two-dimensional example, $\uD = I$, $\Omega = (0, 1)\times (0, 1)$, and a mesh
(cf. Fig.~\ref{fig:Example2D}(a)) with $O(N^{1/2})$ skew elements and a maximum element aspect ratio
of $125:1$ are used. The condition number and its estimate are shown in Fig.~\ref{fig:Example2D}(b)
as functions of $N$. One can see that both the exact values and the estimates have the same
asymptotic order as $N$ increases. One can also see that the condition number with
scaling is significantly smaller than that without scaling and the asymptotic order of the former
is also smaller than that of the latter.
\qed
\end{exam}

\begin{figure}[!htbp]
\centering
\begin{subfigure}{0.45\textwidth}
\centering
\includegraphics[scale = 0.2]{./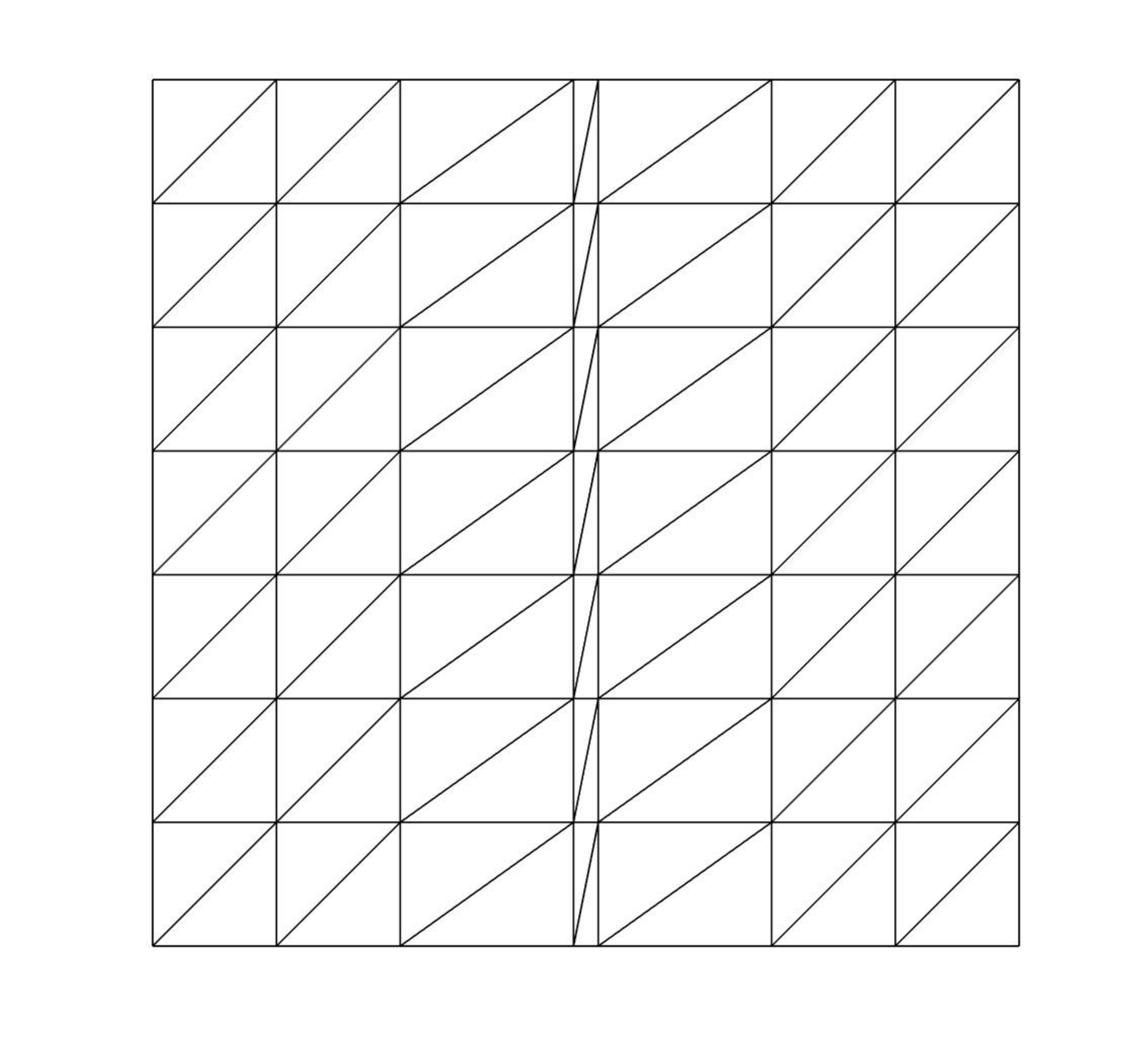}
\caption{  }
\end{subfigure}
\begin{subfigure}{0.45\textwidth}
\centering
\includegraphics[scale = 0.18]{./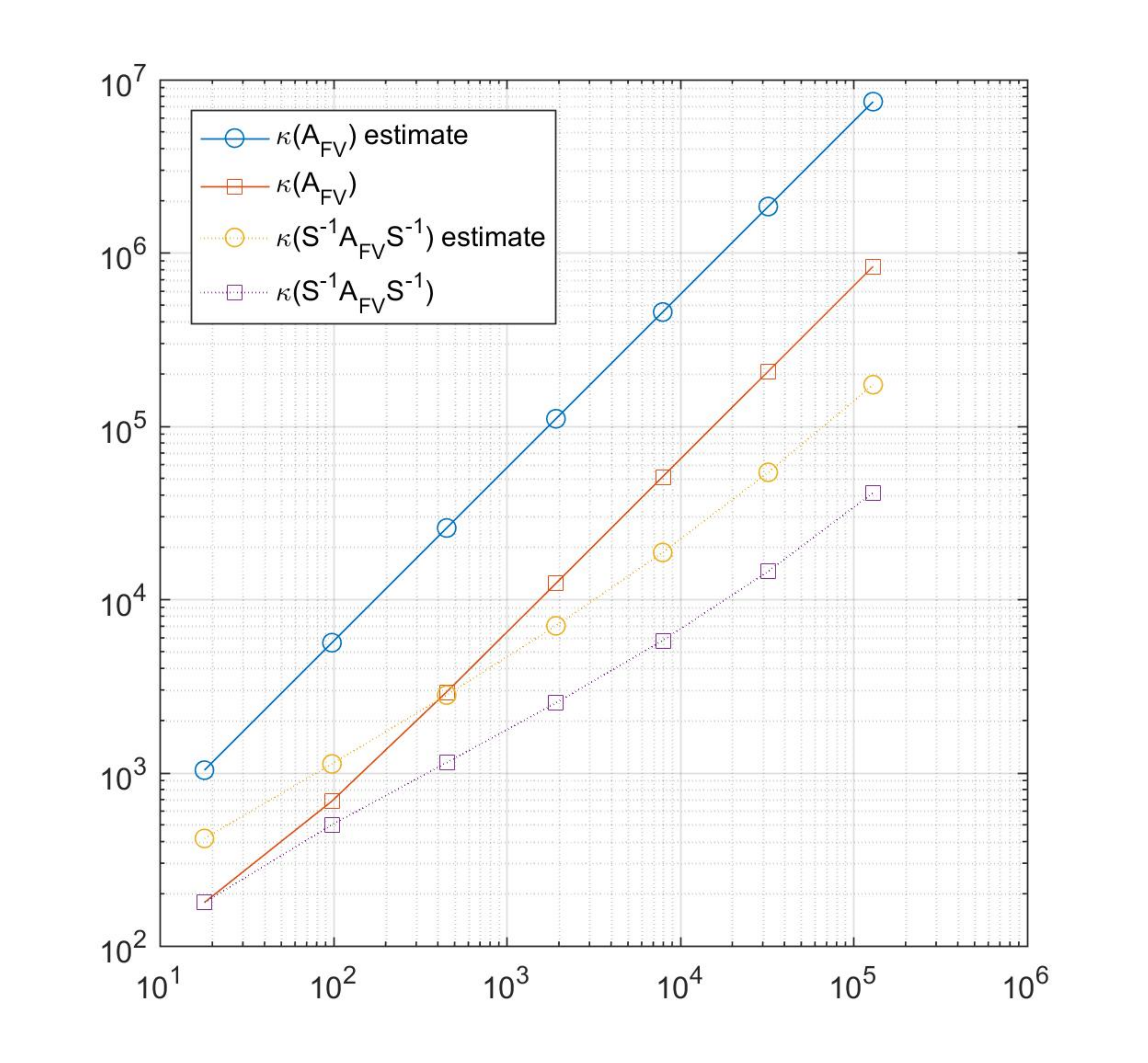}
\caption{ }
\end{subfigure}
\caption{Example~\ref{exam_2}. (a): A mesh example with a maximum element aspect ratio of 125:1
and (b): Exact and estimate condition numbers as functions of $N$.}
\label{fig:Example2D}
\end{figure}

\begin{figure}[!htbp]
\centering
\begin{subfigure}{0.45\textwidth}
\centering
\includegraphics[scale = 0.35]{./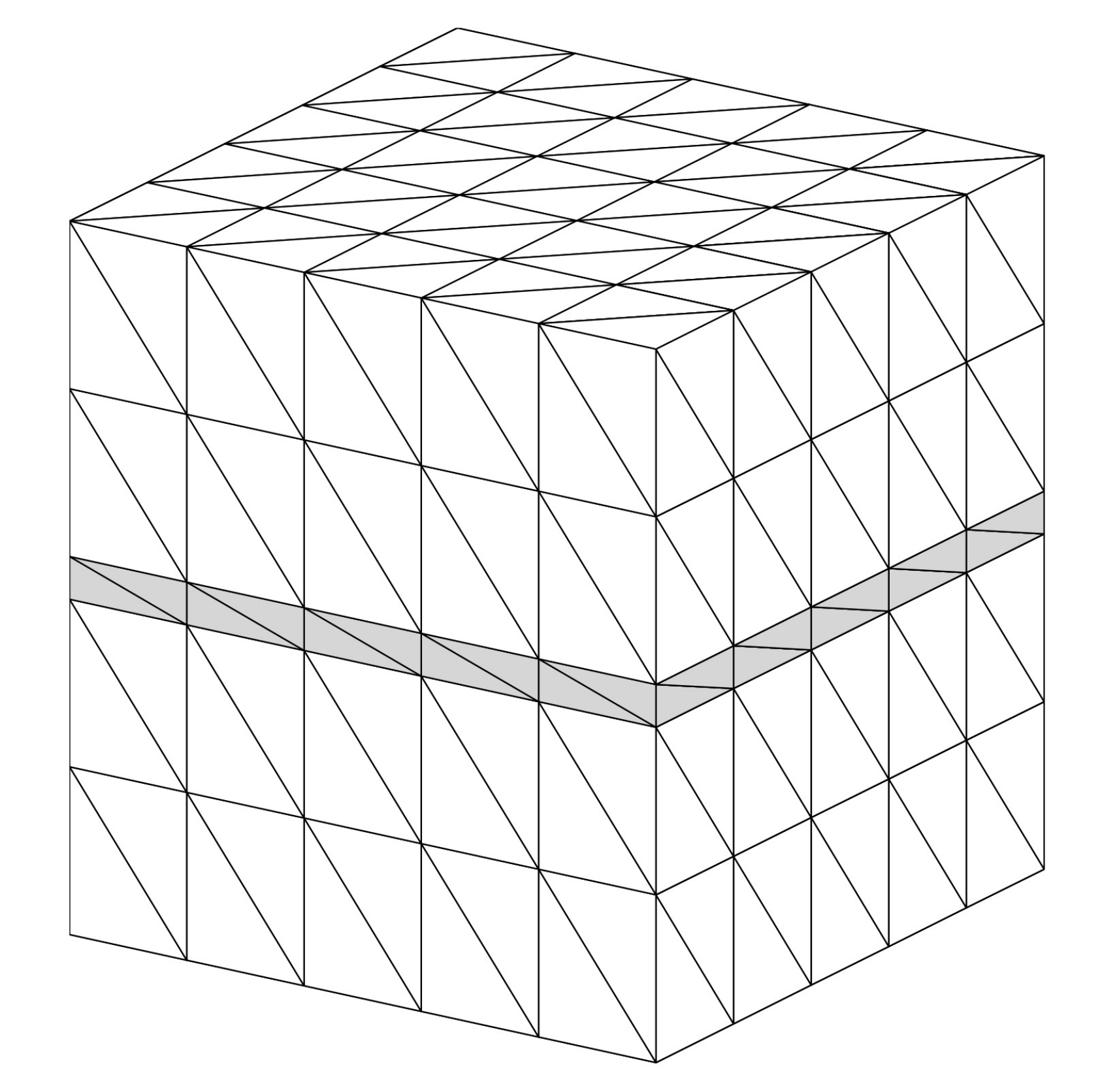}
\caption{ }
\end{subfigure}
\begin{subfigure}{0.45\textwidth}
\centering
\includegraphics[scale = 0.18]{./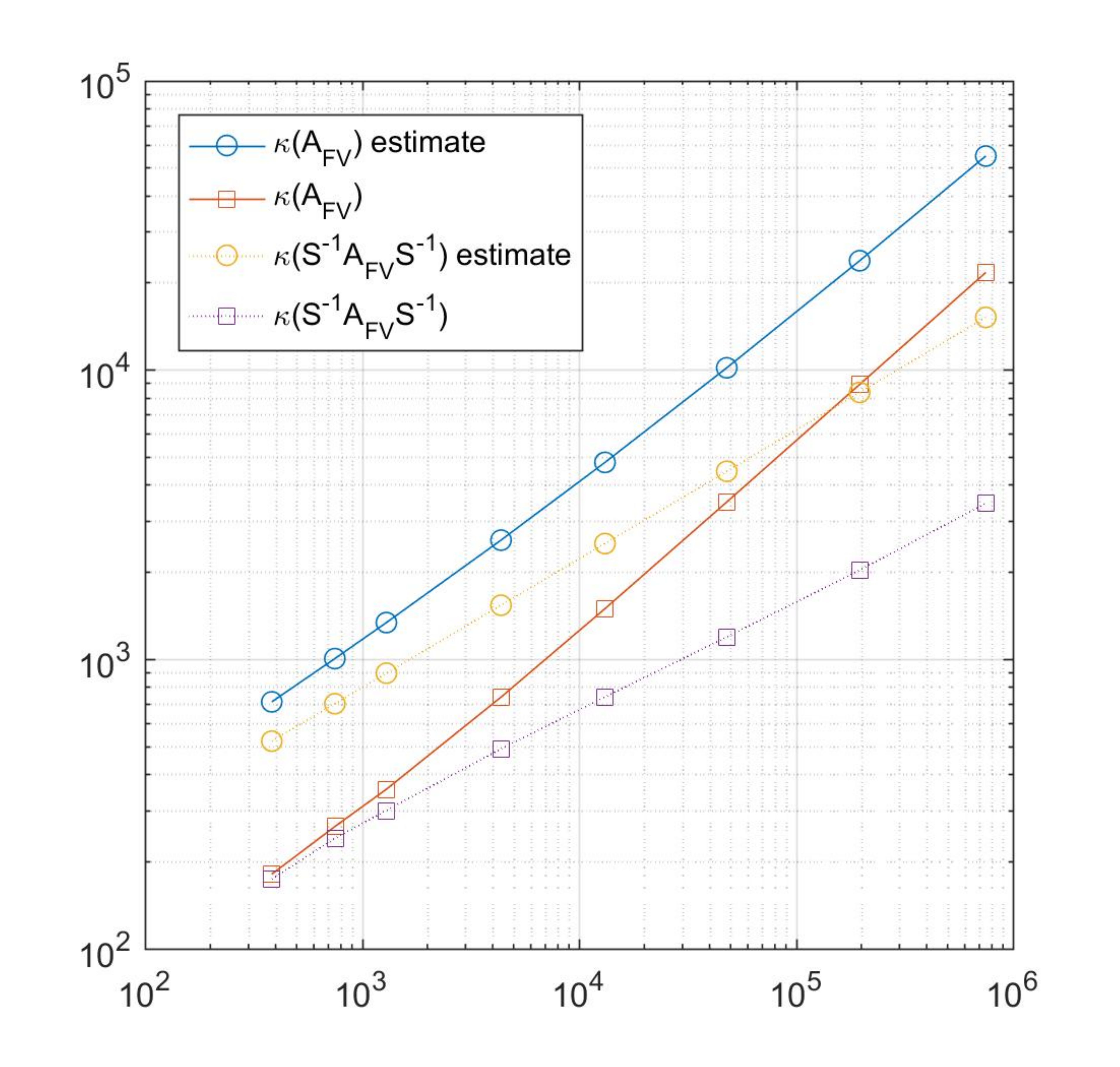}
\caption{ }
\end{subfigure}
\caption{Example~\ref{exam_3}.(a): A mesh example with a maximum element aspect ratio of 125:1
and (b): Exact and estimate condition numbers as functions of $N$.}
\label{fig:Example3D}
\end{figure}

\begin{exam}
\label{exam_3}
In this three-dimensional example, $\uD = I$, $\Omega$ is the unit cube, and a mesh
shown in Fig.~\ref{fig:Example3D}(a) and having $O(N^{2/3})$ skew elements with
a maximum aspect ratio of $125:1$ is used. The results are shown in Fig.~\ref{fig:Example3D}(b)
as $N$ increases. We can see that scaling not only reduces the condition number
significantly but also lowers the asymptotic order in $N$. Moreover, the bound (\ref{cond-1})
and $\kappa (A_{FV})$ have the same asymptotic order. However, the order of the bound (\ref{cond-2})
in $N$ is slightly higher than that of $\kappa (S^{-1} A_{FV}S^{-1})$.
Similar trends have been observed
for a linear finite element discretization in \cite{KaHuXu2012}.
\qed
\end{exam}

\begin{figure}[!htbp]
\centering
\begin{subfigure}{0.45\textwidth}
\centering
\includegraphics[scale = 0.3]{./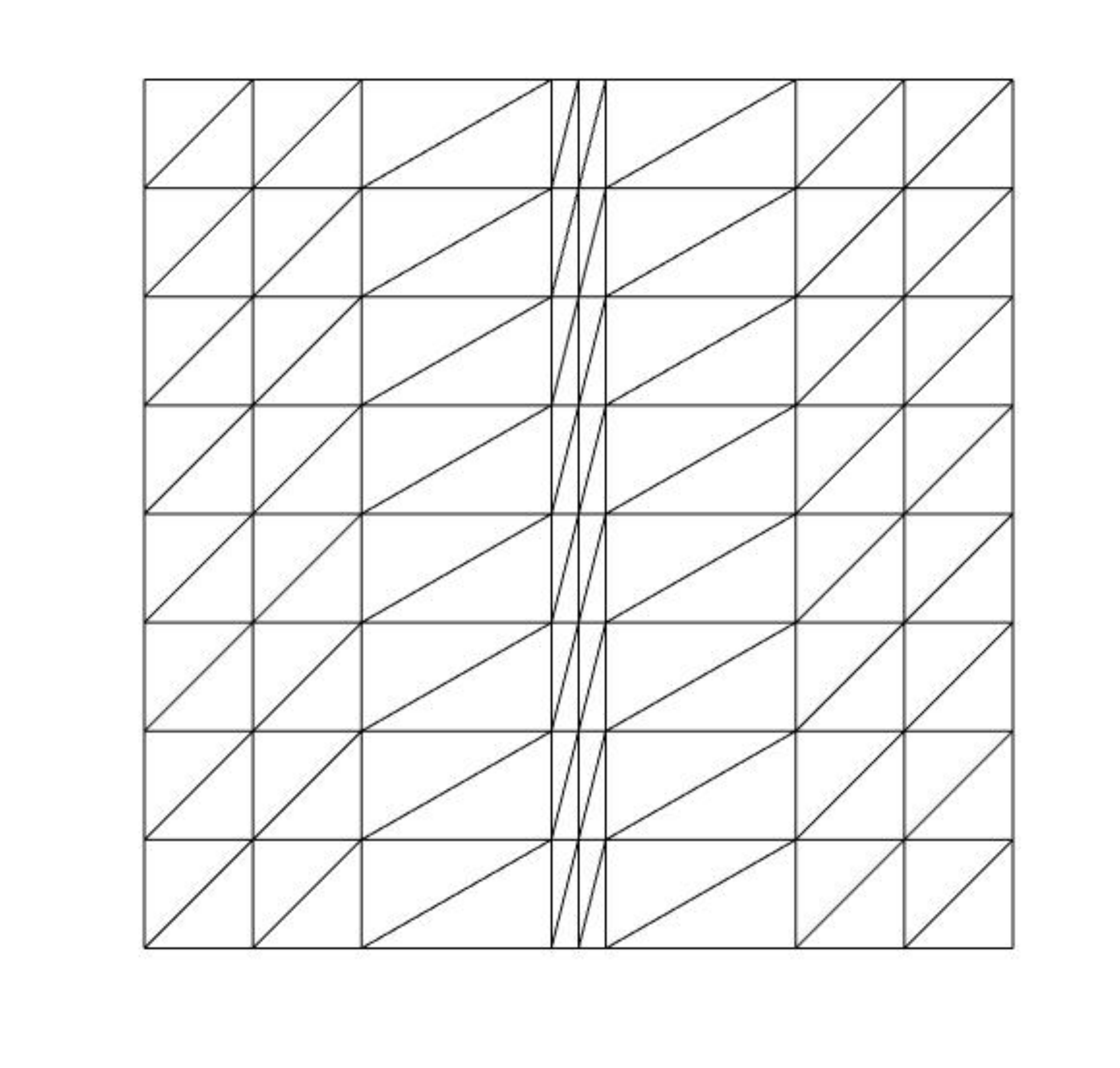}
\caption{ }
\end{subfigure}
\begin{subfigure}{0.45\textwidth}
\centering
\includegraphics[scale = 0.25]{./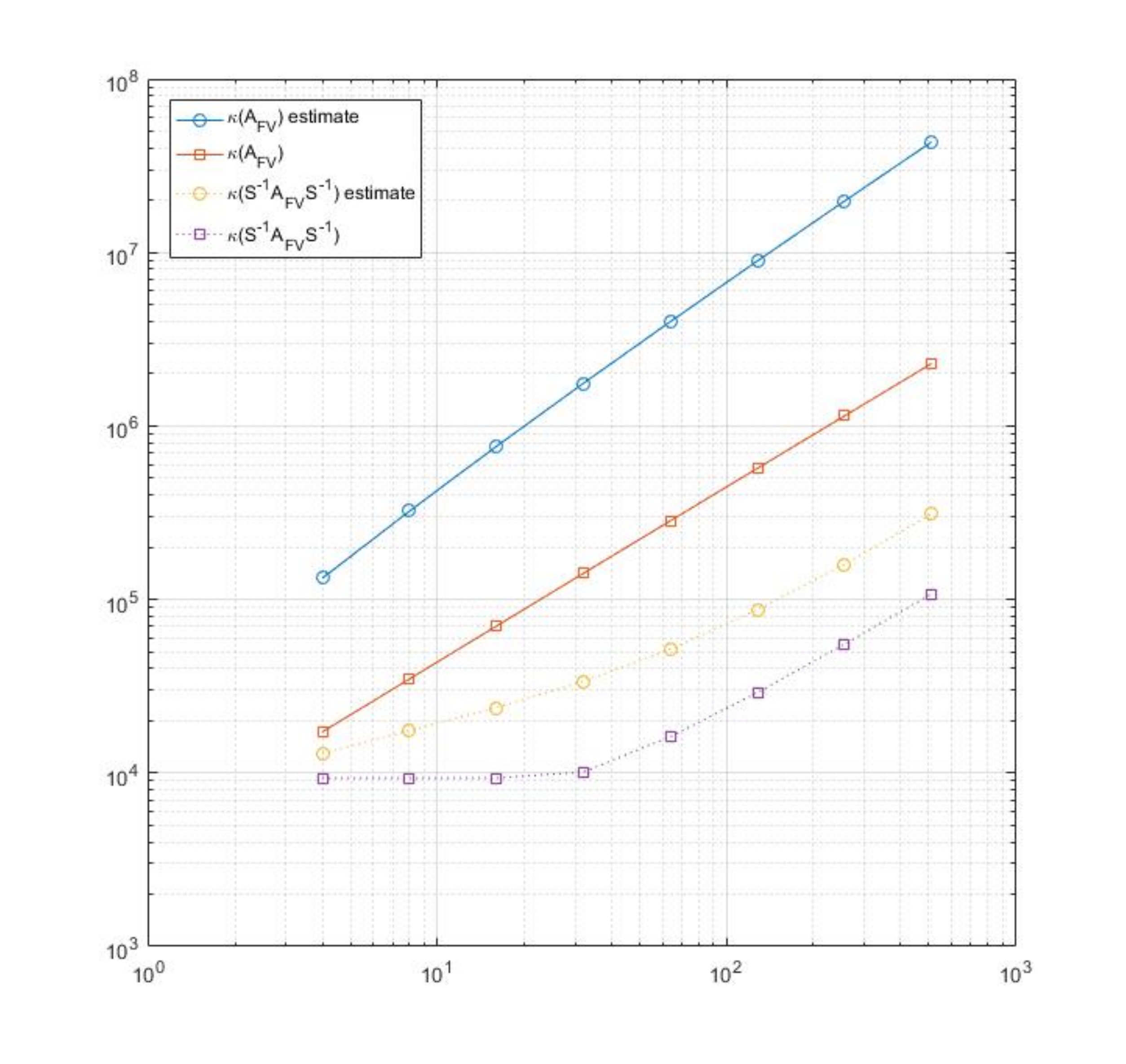}
\caption{ }
\end{subfigure}
\caption{Example~\ref{exam_4}: (a): The predefined meshes 
and (b): Exact and estimate condition numbers as functions of the maximum element aspect ratio
when the size of the mesh is fixed at $N=32258$.}
\label{fig:Example2D_rato}
\end{figure}

\begin{figure}[!htbp]
\centering
\begin{subfigure}{0.45\textwidth}
\centering
\includegraphics[scale = 0.4]{./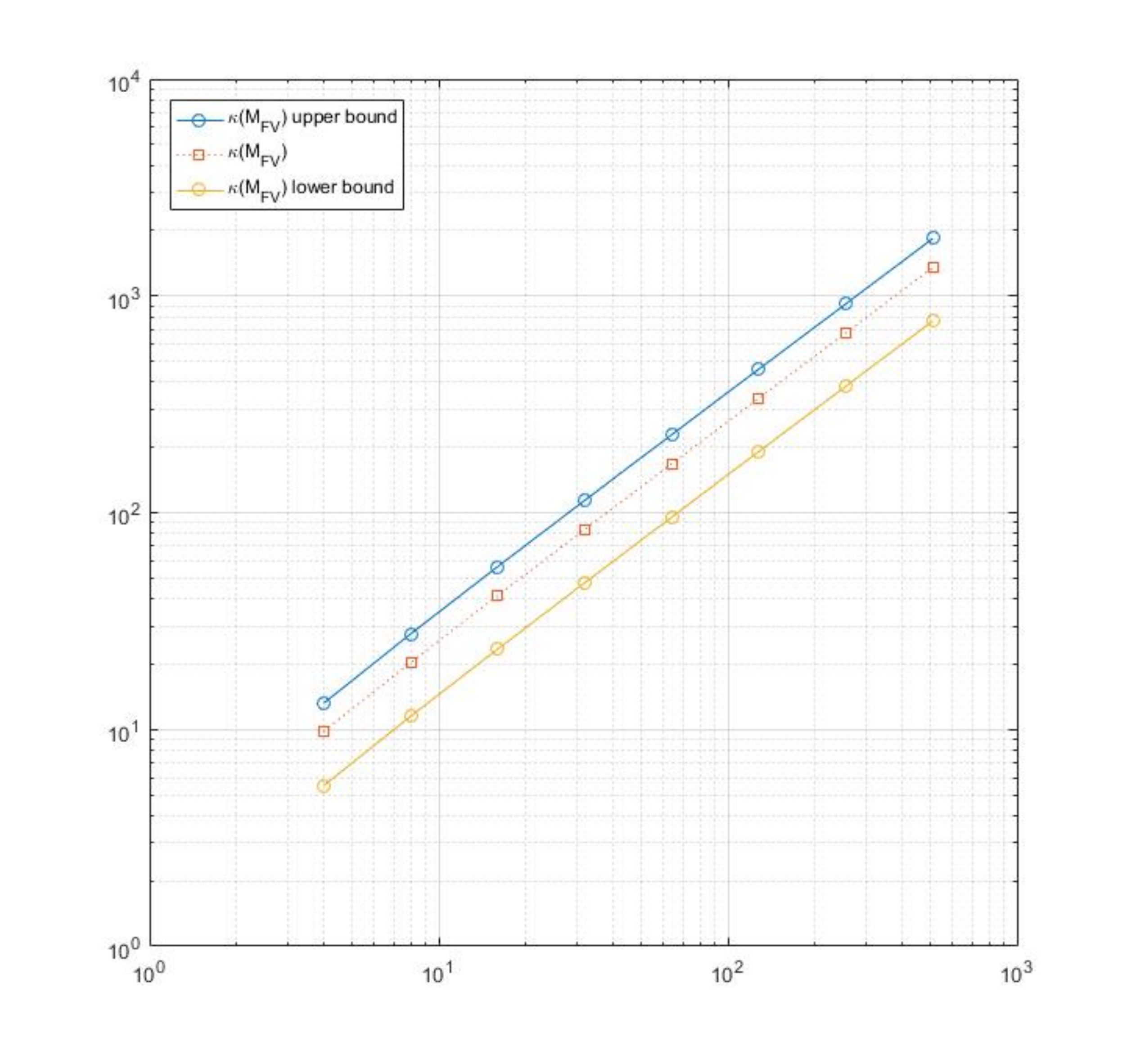}
\caption{ }
\end{subfigure}
\begin{subfigure}{0.45\textwidth}
\centering
\includegraphics[scale = 0.4]{./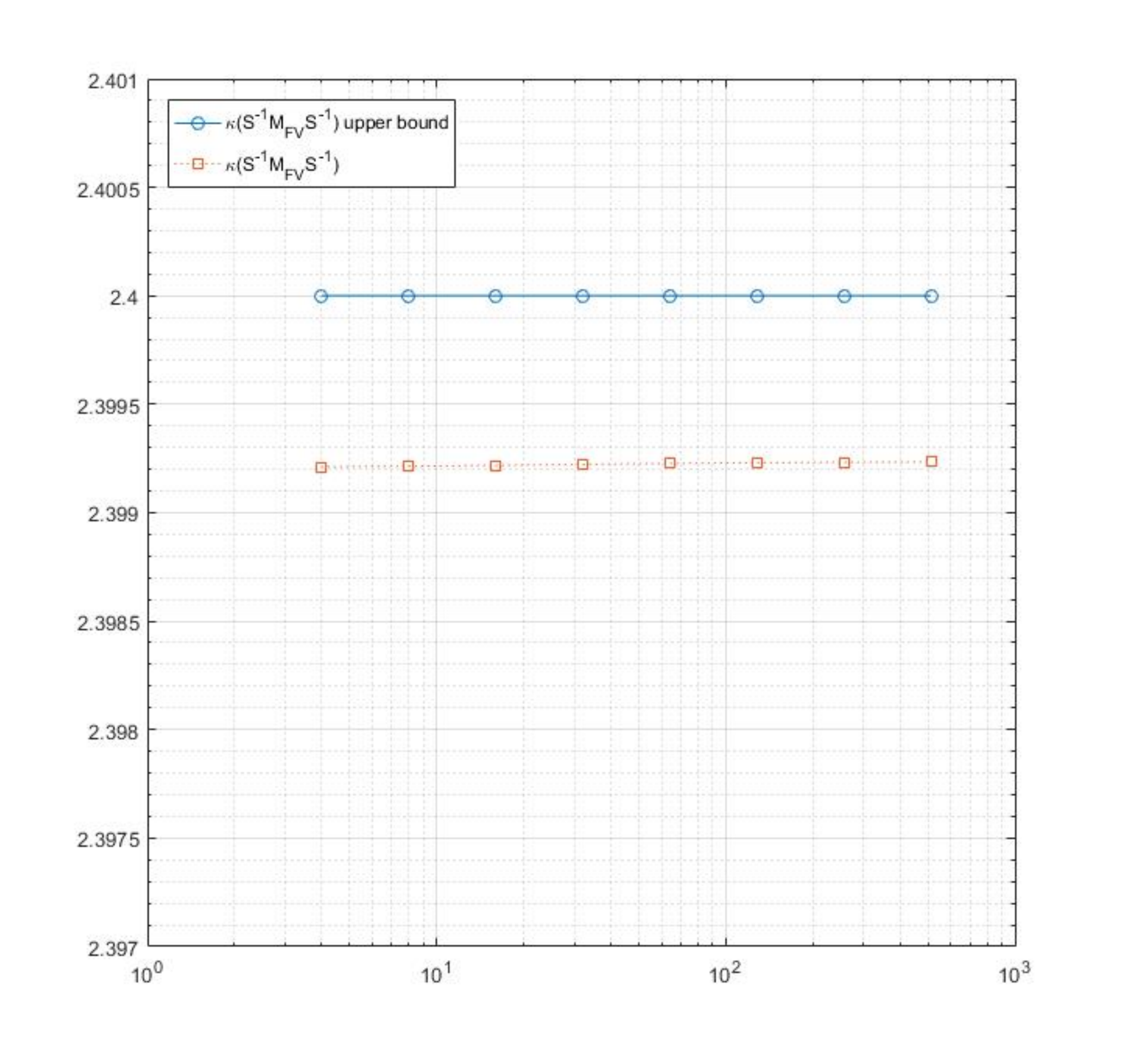}
\caption{ }
\end{subfigure}
\caption{Example~\ref{exam_4}: The condition of the mass matrix and its bounds are shown as
functions of the maximum element aspect ratio when the size of the mesh is fixed at $N=32258$.
(a) Without Jacobian preconditioning and (b) With Jacobian preconditioning.}
\label{fig:Example2D_rato_mass}
\end{figure}

\begin{exam}
\label{exam_4}
The setting of this example is essentially the same as that in Example~\ref{exam_2}
except that the size of the mesh is fixed at $N = 32258$ but its maximum aspect ratio of elements
increases and that the diffusion matrix is chosen as
\[
\uD = \begin{bmatrix} \cos(\theta) & - \sin (\theta) \\  \sin (\theta) & \cos(\theta) \end{bmatrix}
\begin{bmatrix} 1 & 0 \\ 0 & 0.01 \end{bmatrix}
\begin{bmatrix} \cos(\theta) &  \sin (\theta) \\  - \sin (\theta) & \cos(\theta) \end{bmatrix} ,
\]
where $\theta = \pi \sin(x) \cos(y)$.
An example of mesh and the condition number of the stiffness matrix and its estimate are shown
in Fig.~\ref{fig:Example2D_rato}. The results show that the condition number and its estimate
are essentially linear functions of the maximum element aspect ratio. Moreover, the condition number
is much smaller with scaling than without scaling.

The condition number and its bounds for the mass matrix are shown in Fig.~\ref{fig:Example2D_rato_mass}.
Without scaling, they increase linearly with the maximum element aspect ratio. On the contrary, they stay
constant when the Jacobian scaling is used. This is consistent with Theorem~\ref{thm:kappaSMS}.
\end{exam}

%
%
%

\section{Conclusions}
\label{SEC:conclusions}

In the previous sections we have studied the conditioning of the stiffness matrix $A_{FV}$
of the linear finite volume element discretization of the boundary value problem (\ref{BVP-1})
with general simplicial meshes.
Since $A_{FV}$ is nonsymmetric in general, we define its condition number (\ref{cond_0}) as
the ratio of the maximum singular value, $\sigma_{max}(A_{FV})$, to the minimum eigenvalue
of its symmetric part, $\lambda_{min}((A_{FV}+A_{FV}^{T})/2)$, in lieu of the convergence
of GMRES (cf. (\ref{Eisenstat-1})). The situations with and without Jacobian preconditioning
have been considered. An upper bound on the maximum singular
value and a lower bound on the minimum eigenvalue of the symmetric part have been obtained
in Theorems~\ref{thm:eigFV_max} and \ref{thm:eigFV_min}, respectively, and an upper bound
on the condition number has been obtained in Theorem~\ref{thm-cond}.

It is noted that those theoretical results have been obtained for a general diffusion matrix $\uD$ and
a sufficiently fine, arbitrary simplicial mesh in any dimension.
They not only provide a bound on the condition number of the stiffness matrix
but also shed light on the effects of the interplay between the diffusion matrix
and the mesh geometry. Particularly, the bounds reveal that without scaling, the condition number
is affected by the number of the elements $N$, the mesh nonuniformilty in the Euclidean metric, and
the mesh nonuniformilty in the metric specified by $\uD^{-1}$.
For meshes that are uniform in $\uD^{-1}$, the last factor will be eliminated but
the mesh nonuniformilty in the Euclidean metric still plays a role; see (\ref{cond-3}).
On the other hand, the analysis shows that the effects by the mesh nonuniformilty in the Euclidean metric
can be eliminated by scaling. For the situation with scaling and a $\uD^{-1}$-uniform mesh,
the condition number depends only on the number of the elements (cf. (\ref{cond-4})).
Numerical examples confirm the above analysis.

A similar analysis has been carried out for the mass matrix in \S\ref{SEC:mass}.
The main results are stated in Theorems~\ref{thm:kappaM} and \ref{thm:kappaSMS}.
They show that the condition number of the mass matrix for the linear
FVEM discretization depends only on the mesh nonuniformilty in the Euclidean metric
and scaling can effectively eliminate its effects.

It is remarked that the results and observations made in this work are comparable and consistent with
those in \cite{KaHuXu2012} for a linear finite element discretization of (\ref{BVP-1}).
The only noticeable difference is that the assumption of the mesh being sufficiently fine is needed
in the current analysis.
This is not surprising since FVEM generally does not preserve the symmetry of the underlying differential
operator. Moreover, when the mesh is sufficiently fine, roughly speaking,
both the FVEM and FEM discretizations are close to the differential operator and
thus should exhibit similar behaviors. In this spirit, it is expected that the analysis in this work
can be extended to higher-order FVEMs without major modifications; see \cite{HKL2013b}
for studies for higher-order FEMs.

\vspace{20pt}

\section*{Acknowledgments}
The work was supported in part by
the National Natural Science Foundation of China through grants 11701211 and 11371170, the China Postdoctoral Science Foundation through grant 2017M620106, the Joint Fund of the National Natural Science Foundation of China and the China Academy of Engineering Pysics (NASF) through grant U1630249, and
the Science Challenge Program (China) through grant JCKY2016212A502.
X.W. was supported by China Scholarship Council (CSC) under grant 201506170088
for his research visit to the University of Kansas from September of 2015 to September of 2016.
X.W. is thankful to the Department of Mathematics of the University of Kansas for the hospitality
during his visit.


\section*{Appendix A: The expressions for $m_1$ and $m_2$}

To obtain the values of $m_1$ and $m_2$ for general $d$ dimensions, we consider $K$ to be a right simplex
as shown in Fig.~\ref{fig:order1forDim_d} for two and three dimensions.
The dual element $K_{P_{i}}^{*}$ restricted to the primary element $K$ is a polyhedron with $2d$ faces
(see the polyhedron $P_{1}M_{1}M_{0}M_{2}$ in Fig.~\ref{fig:order1forDim_d}(a) and
the polyhedron $P_{1}M_{1}M_{2}M_{3}M_{4}M_{5}M_{6}M_{0}$ in Fig.~\ref{fig:order1forDim_d}(b)).
We now consider $d = 2$, $d = 3$, and a general $d$ case separately.

\begin{figure}[!htbp]
\centering
\begin{subfigure}{0.4\textwidth}
\centering
\includegraphics[scale = 0.5]{./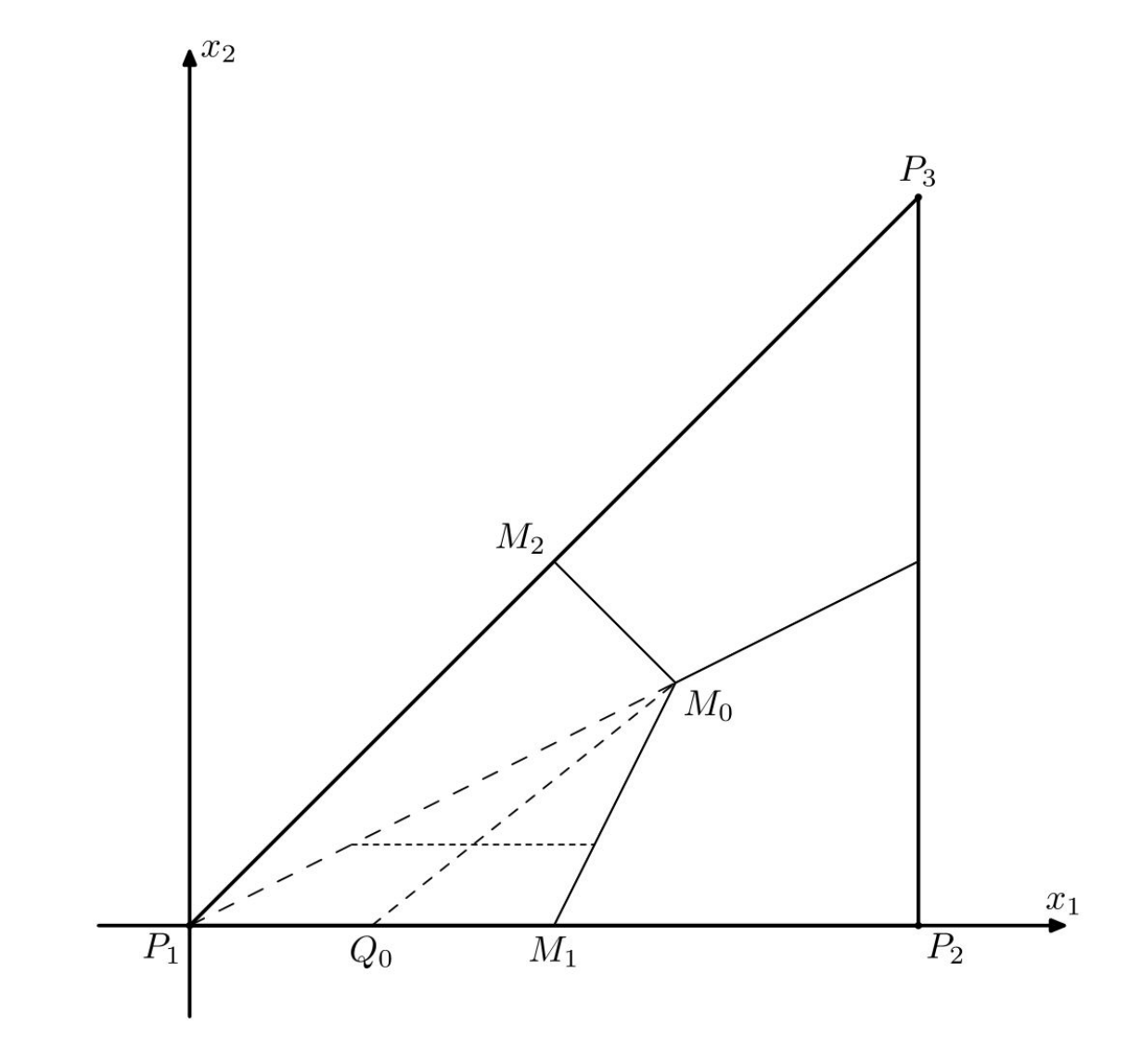}
\caption{}
\end{subfigure}
\begin{subfigure}{0.4\textwidth}
\centering
\includegraphics[scale = 0.5]{./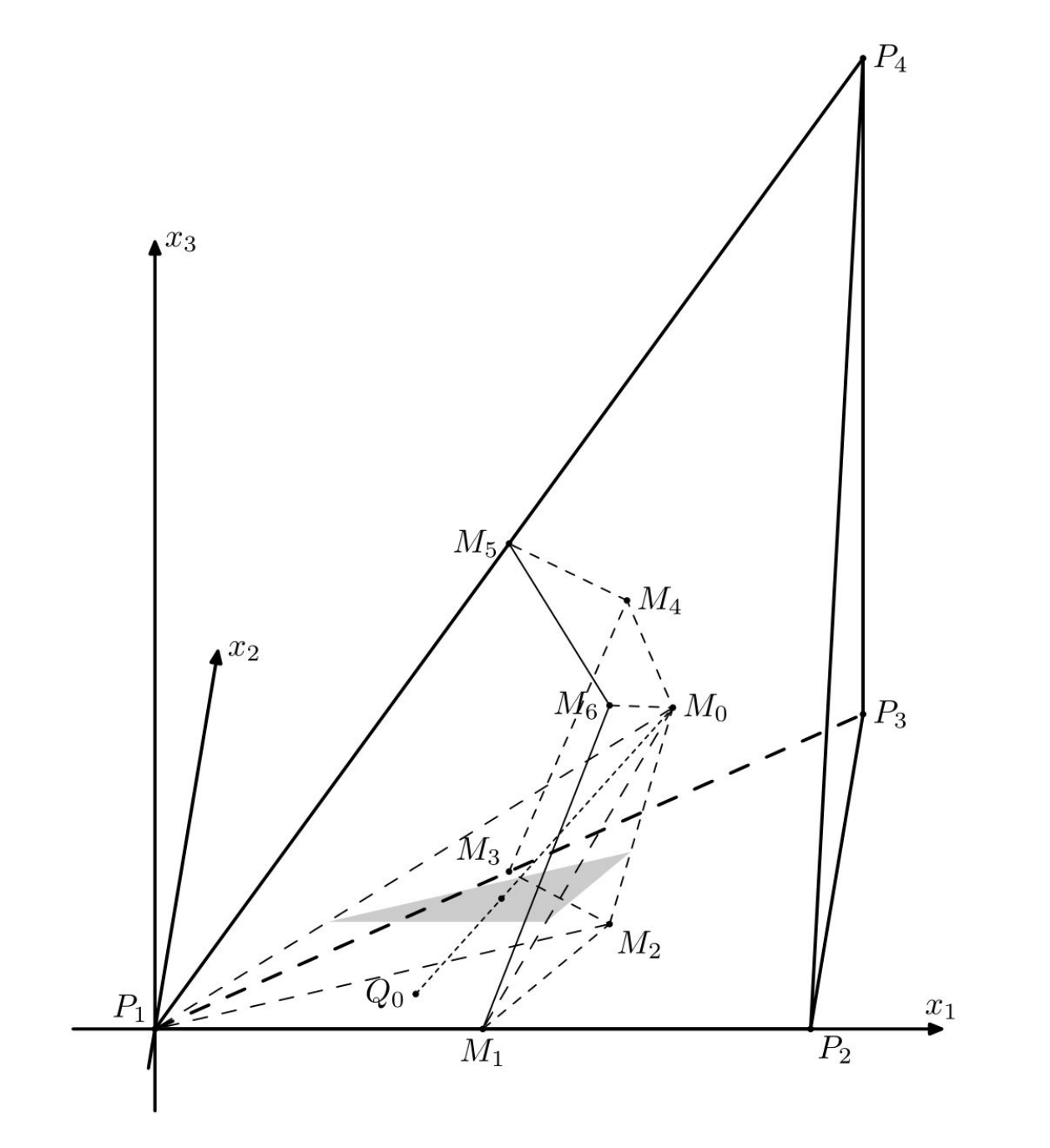}
\caption{}
\end{subfigure}
\caption{(a) A right triangle in two dimensions; (b) A right tetrahedron in three dimensions.}
\label{fig:order1forDim_d}
\end{figure}

\vspace{10pt}

{\em The $d=2$ case.} Consider the triangle $K=\triangle P_{1}P_{2}P_{3}$ in Fig.~\ref{fig:order1forDim_d}(a),
where $P_{1}=(0,0)$, $P_{2}=(1,0)$, and $P_{3}=(1,1)$. Denote the midpoints of $P_{1}P_{2}$
and $P_{1}P_{3}$ by $M_{1}$ and $M_{2}$, respectively, and the barycenter of $K$ by $M_{0}$.
Then $\phi_{P_{1}}|_{K}=1-x$. It is not difficult to see that
\begin{eqnarray}
\int_{K_{P_{i}}^{*}\cap K} \phi_{P_{1}} \ud \ux = 2\int_{K_{0}} \phi_{P_{1}} \ud \ux
= m_{1} |K| =\frac{1}{2} m_{1}.
\label{eq:m1Vsd2D_a}
\end{eqnarray}
Here, $K_{0}=\triangle P_{1}M_{1}M_{0}$. Since $\phi_{P_{1}}|_{K}$ is a linear function, the integral of $\phi_{P_{1}}$ on $P_{1}M_{1}$ equals $\phi_{P_{1}}(Q_{0})$ multiplied by the length of $P_{1}M_{1}$. Thus,
\begin{eqnarray}
&& \int_{K_{0}} \phi_{P_{1}} \ud \ux
\nonumber \\
&=& \int_{y(Q_{0})}^{y(M_{0})}
    \left( \frac{y-y(Q_{0})}{y(M_{0})-y(Q_{0})}(\phi_{P_{1}}(M_{0})-\phi_{P_{1}}(Q_{0})) + \phi_{P_{1}}(Q_{0}) \right)  \frac{y(M_{0})-y}{y(M_{0})-y(Q_{0})} |P_{1}M_{1}|  \,\ud y \nonumber \\
&=& \int_{0}^{\frac{1}{3}}
    \left( \frac{y}{(\frac{1}{3}-0)}( \, (1-\frac{2}{3})-(1-\frac{1}{4})) + (1-\frac{1}{4})  \right)
         \frac{(\frac{1}{3}-y)}{(\frac{1}{3}-0)} \frac{1}{2}  \,\ud y \nonumber \\
&=& \frac{11}{216}.  \nonumber
\end{eqnarray}
Combining this with (\ref{eq:m1Vsd2D_a}), we have
\[
m_{1} = 4\int_{K_{0}} \phi_{P_{1}} \ud \ux = \frac{11}{54}.
\]

On the other hand,
\begin{eqnarray}
|K_{P_{i}}^{*}\cap K|=\frac{1}{3}|K| =\int_{K_{P_{i}}^{*}\cap K} \sum\limits_{j=1,2,3} \phi_{P_{j}} \ud \ux = (m_{1}+2m_{2}) |K|.    \nonumber
\end{eqnarray}
Then,
\[
m_{2}=\frac{7}{108}.
\]

\vspace{10pt}

{\em The $d=3$ case.} Consider the tetrahedron $K=P_{1}P_{2}P_{3}P_{4}$ in Fig.~\ref{fig:order1forDim_d}(b),
where $P_{1}=(0,0,0)$, $P_{2}=(1,0,0)$, $P_{3}=(1,1,0)$, and $P_{4}=(1,1,1)$.
Denote the midpoints of $P_{1}P_{2}$, $P_{1}P_{3}$, and $P_{1}P_{4}$ by $M_{1}$, $M_{3}$, and $M_{5}$,
respectively, and the barycenters of the corresponding faces of $K$ by $M_{2}$, $M_{4}$, and $M_{6}$.
Let $M_{0}$ be the centroid of $K$ and $Q_{0}$ be the barycenter of $\triangle P_{1}M_{1}M_{2}$.
Then $\phi_{P_{1}}|_{K}=1-x$. It is not difficult to see that
\begin{eqnarray}
\frac{1}{6} m_{1}= m_{1} |K| =\int_{K_{P_{i}}^{*}\cap K} \phi_{P_{1}}
\ud \ux = 3\times ( 2 \int_{K_{0}} \phi_{P_{1}} \ud \ux )
.  \label{eq:m1Vsd3D_a}
\end{eqnarray}
Here, $K_{0}$ is the tetrahedron formed by the vertices $P_{1}$, $M_{1}$, $M_{2}$, and $M_{0}$.
Since $\phi_{P_{1}}|_{K}$ is a linear function, the integral of $\phi_{P_{1}}$ on $\triangle P_{1}M_{1}M_{2}$ equals $\phi_{P_{1}}(Q_{0})$ multiplied by the area of $\triangle P_{1}M_{1}M_{2}$. Thus,
\begin{eqnarray}
\int_{K_{0}} \phi_{P_{1}} \ud \ux
&=& \int_{z(Q_{0})}^{z(M_{0})}
    \left( \frac{z-z(Q_{0})}{z(M_{0})-z(Q_{0})}(\phi_{P_{1}}(M_{0})-\phi_{P_{1}}(Q_{0})) + \phi_{P_{1}}(Q_{0}) \right) \nonumber \\
&&  \qquad\quad \Big(\frac{z(M_{0})-z}{z(M_{0})-z(Q_{0})}\Big)^2 |\triangle P_{1}M_{1}M_{2}|  \,\ud z \nonumber \\
&=& \int_{0}^{\frac{1}{4}}
    \left( \frac{z}{\frac{1}{4}-0}
            ( \, (1-\frac{3}{4})-( 1-\frac{\frac{1}{2}+\frac{2}{3}}{3} )) + (1-\frac{\frac{1}{2}+\frac{2}{3}}{3})  \right)
         \Big(\frac{\frac{1}{4}-z}{\frac{1}{4}-0}\Big)^2 \frac{1}{6}  \,\ud z \nonumber \\
&=& \frac{25}{6912}.  \nonumber
\end{eqnarray}
From (\ref{eq:m1Vsd3D_a}), we obtain
\[
m_{1} = 36\int_{K_{0}} \phi_{P_{1}} \ud \ux = \frac{25}{192}.
\]

On the other hand,
\begin{eqnarray}
|K_{P_{i}}^{*}\cap K|=\frac{1}{4}|K| =\int_{K_{P_{i}}^{*}\cap K} \sum_{j=1}^{4} \phi_{P_{j}} \ud \ux = (m_{1}+3m_{2}) |K|.    \nonumber
\end{eqnarray}
Then,
\[
m_{2}=\frac{23}{576}.
\]

\vspace{10pt}

{\em The general $d$ case.} A similar procedure can be used in general $d$ dimensions. We have
\begin{eqnarray}
&& \int_{K_{P_{i}}^{*}\cap K} \phi_{P_{1}} \ud \ux
\nonumber \\
&=& d\int_{K_{0}} \phi_{P_{1}} \ud \ux       \nonumber\\
&=& d \int_{x_{d}(Q_{0})}^{x_{d}(M_{0})}
    \left( \frac{x_{d}-x_{d}(Q_{0})}{x_{d}(M_{0})-x_{d}(Q_{0})}(\phi_{P_{1}}(M_{0})-\phi_{P_{1}}(Q_{0})) + \phi_{P_{1}}(Q_{0}) \right) \times \nonumber \\
&&  \qquad\quad \Big(\frac{x_{d}(M_{0})-x_{d}}{x_{d}(M_{0})-x_{d}(Q_{0})}\Big)^{d-1} S_{K_{d-1}}  \,\ud x_{d} \nonumber \\
&=& d\int_{0}^{\frac{1}{d+1}}
    \left( \frac{x_{d}}{\frac{1}{d+1}-0}
            ( \, (1-\frac{d}{d+1})-( 1- \frac{\sum_{j=1}^{d}(1-\frac{1}{j})}{d} ))
            + (1-\frac{\sum_{j=1}^{d}(1-\frac{1}{j})}{d})  \right) \times       \nonumber\\
&&  \qquad\quad\Big(\frac{\frac{1}{d+1}-x_{d}}{\frac{1}{d+1}-0}\Big)^{d-1} |K|  \,\ud x_{d} \nonumber \\
&=& d\int_{0}^{\frac{1}{d+1}}
    \left( x_{d}(d+1)  \Big( \, \frac{1}{d+1} +  \frac{\sum_{j=1}^{d}\frac{1}{j}}{d} \Big) + \frac{\sum_{j=1}^{d}\frac{1}{j}}{d}  \right)
    \Big(\frac{\frac{1}{d+1}-x_{d}}{\frac{1}{d+1}-0}\Big)^{d-1} |K|  \,\ud x_{d} \nonumber \\
&=&  \frac{1}{(d+1)^3} \left( 1+(d+1)\sum_{i=1}^{d}\frac{1}{i} \right)|K|.
\nonumber
\end{eqnarray}
Here, $S_{K_{d-1}}$ is the $(d-1)$-dimensional measure of the face of $K_{P_{i}}^{*}\cap K$ restricted
on $x_{d}=0$, which is equal to $|K|$ in the current situation, and  $K_{0}$ denotes the polyhedron
bounded by the face of $K_{P_{i}}^{*}\cap K$ restricted on $x_{d}=0$ and $M_{0}$, whose $(d-1)$-dimensional
measure is $|K|/d$.
\begin{footnote}{For simplicity, in the $3$-dimensional case, $K_{0}$ denotes the tetrahedron
$P_{1}P_{2}P_{3}P_{4}$, i.e., an
half of the polyhedron, which is bounded by the face of $K_{P_{i}}^{*}\cap K$ restricted on $x_{d}=0$ and $M_{0}$.}
\end{footnote}
Thus,
\begin{equation}
m_1 = \frac{1}{(d+1)^3} \left( 1+(d+1)\sum_{i=1}^{d}\frac{1}{i} \right)
\label{eq:m1VsddD}
\end{equation}

Moreover, we have
\begin{eqnarray}
|K_{P_{i}}^{*}\cap K| = \frac{1}{d+1} |K|
=\int_{K_{P_{i}}^{*}\cap K} \sum_{j=1}^{d+1} \phi_{P_{j}} \ud \ux = (m_{1}+dm_{2}) |K|.    \nonumber
\end{eqnarray}
Combining this with (\ref{eq:m1VsddD}), we obtain (\ref{m1-m2}).



\begin{thebibliography}{10}

\bibitem{Ainsworth1999}
M.~Ainsworth, W.~McLean, and T.~Tran.
\newblock The conditioning of boundary element equations on locally refined
  meshes and preconditioning by diagonal scaling.
\newblock {\em SIAM J. Numer. Anal.}, 36:1901--1932 (electronic), 1999.

\bibitem{AMT2000}
M.~Ainsworth, W.~McLean, and T.~Tran.
\newblock Diagonal scaling of stiffness matrices in the {G}alerkin boundary
  element method.
\newblock {\em ANZIAM J.}, 42:141--150, 2000.

\bibitem{BankRose1987}
R.~E. Bank and D.~J. Rose.
\newblock Some error estimates for the box scheme.
\newblock {\em SIAM J. Numer. Anal.}. 24:777--787, 1987.

\bibitem{BaSc1989}
R.~E. Bank and L.~R. Scott.
\newblock On the conditioning of finite element equations with highly refined
  meshes.
\newblock {\em SIAM J. Numer. Anal.}, 26:1383--1394, 1989.

\bibitem{Barth2004}
T.~Barth and M.~Ohlberger.
\newblock {\em Finite Volume Methods : Foundation and Analysis}, volume~1,
  chapter~15, pages 1--57.
\newblock John Wiley \& Sons, 2004.

\bibitem{BS94}
S.~C. Brenner and L.~R. Scott.
\newblock {\em The Mathematical Theory of Finite Element Methods}.
\newblock Springer-Verlag, New York, 1994.

\bibitem{CJBi2007}
C. Bi and V. Ginting.
\newblock Two-grid finite volume element method for linear and nonlinear elliptic problems. \newblock {\em Numer. Math.}, 108:177--198, 2007.

\bibitem{Cai1991SIAM}
Z. Cai, J. Mandel and S. McCormick.
\newblock The finite volume element method for diffusion equations on general triangulations
\newblock {\em SIAM J. Numer. Anal.}. 28:392--402, 1991.

\bibitem{Cai1991}
Z.~Cai.
\newblock On the finite volume element method.
\newblock {\em Numer. Math.}, 58:713--735, 1991.

\bibitem{CZZ2015}
W.~Cao, Z.~Zhang, and Q.~Zou.
\newblock Is {$2k$}-conjecture valid for finite volume methods?
\newblock {\em SIAM J. Numer. Anal.}, 53:942--962, 2015.


\bibitem{LChen2010}
L. Chen.
\newblock A new class of high order finite volume methods for second order elliptic equations.
\newblock {\em SIAM J. Numer. Anal.}, 47:4021--4043, 2010.

\bibitem{ZYChenMathComp2015}
Z. Chen, Y. Xu, and Y. Zhang.
\newblock A construction of higher-order finite volume methods.
\newblock {\em Math. Comp.}, 84:599--628, 2015.


\bibitem{CWX2012}
Z.~Chen, J.~Wu, and Y.~Xu.
\newblock Higher-order finite volume methods for elliptic boundary value
  problems.
\newblock {\em Adv. Comput. Math.}, 37:191--253, 2012.

\bibitem{SHChou2007}
S. H. Chou, and X. Ye.
\newblock Unified analysis of finite volume methods for second order elliptic problems
\newblock {\em SIAM J. Numer. Anal.}, 45:1639-1653, 2007.

\bibitem{Du2009}
Q.~Du, D.~Wang, and L.~Zhu.
\newblock On mesh geometry and stiffness matrix conditioning for general finite
  element spaces.
\newblock {\em SIAM J. Numer. Anal.}, 47:1421--1444, 2009.

\bibitem{Eisenstat1983}
S.~C. Eisenstat, H.~C. Elman, and M.~H. Schultz.
\newblock Variational iterative methods for nonsymmetric systems of linear
  equations.
\newblock {\em SIAM J. Numer. Anal.}, 20:345--357, 1983.

\bibitem{EG04}
A.~Ern and J.~L. Guermond.
\newblock {\em Theory and Practice of Finite Elements}.
\newblock Springer-Verlag, New York, 2004.

\bibitem{REEwing2002}
R. E. Ewing, T. Lin, and Y. Lin.
\newblock On the accuacy of the finite volume element method based
on piecewise linear polynomials.
\newblock {\em SIAM J. Numer. Anal.}, 39: 1865--1888, 2002.


\bibitem{Fri1973}
I.~Fried.
\newblock Bounds on the spectral and maximum norms of the finite element
  stiffness, flexibility and mass matrices.
\newblock {\em Int. J. Solids Struct.}, 9:1013--1034, 1973.

\bibitem{Gilbarg01}
D.~Gilbarg and N.~S. Trudinger.
\newblock {\em Elliptic Partial Differential Equations of Second Order}.
\newblock Classics in Mathematics. Springer-Verlag Berlin Heidelberg, reprint
  of the 1998 edition, 2001.

\bibitem{Graham2006}
I.~G. Graham and W.~McLean.
\newblock Anisotropic mesh refinement: the conditioning of {G}alerkin boundary
  element matrices and simple preconditioners.
\newblock {\em SIAM J. Numer. Anal.}, 44:1487--1513 (electronic), 2006.

\bibitem{Hackbusch1989}
W. Hackbusch.
\newblock On first and second order box schemes.
\newblock {\em Computing}, 41:277--296, 1989.

\bibitem{Huang2014a}
W.~Huang.
\newblock Sign-preserving of principal eigenfunctions in {P1} finite element
  approximation of eigenvalue problems of second-order elliptic operators.
\newblock {\em J. Comput. Phys.}, 274:230--244, 2014.

\bibitem{HKL2013b}
W.~Huang, L.~Kamenski, and J.~Lang.
\newblock Stability of explicit {R}unge-{K}utta methods for high order finite
  element approximation of linear parabolic equations.
\newblock In {\em Numerical Mathematics and Advanced Applications}, volume 103,
  pages 165--173, 2015.
\newblock (Proceedings of The 2013 European Numerical Mathematics and Advanced
  Applications Conference ENUMATH-2013, Lausanne, Switzerland, August 26 - 30, 2013).

\bibitem{HuKaLa2013}
W.~Huang, L.~Kamenski, and J.~Lang.
\newblock Stability of explicit {R}unge-{K}utta methods for finite element
  approximation of linear parabolic equations on anisotropic meshes.
\newblock {\em SIAM J. Numer. Anal.}, 54:1612--1634, 2016.

\bibitem{HR11}
W.~Huang and R.~D. Russell.
\newblock {\em Adaptive Moving Mesh Methods}.
\newblock Springer, New York, 2011.
\newblock Applied Mathematical Sciences Series, Vol. 174.

\bibitem{KaHu2013b}
L.~Kamenski and W.~Huang.
\newblock A study on the conditioning of finite element equations with
  arbitrary anisotropic meshes via a density function approach.
\newblock {\em J. Math. Study}, 47:151--172, 2014.

\bibitem{HKL2016}
L.~Kamenski, W.~Huang, and J.~Lang.
\newblock Conditioning of implicit {R}unge-{K}utta integration of finite
  element approximation of linear diffusion equations on anisotropic meshes.
\newblock {\em (submitted)}, 2017.

\bibitem{KaHuXu2012}
L.~Kamenski, W.~Huang, and H.~Xu.
\newblock Conditioning of finite element equations with arbitrary anisotropic
  meshes.
\newblock {\em Math. Comp.}, 83:2187--2211, 2014.

\bibitem{LCW2000}
R.~Li, Z.~Chen, and W.~Wu.
\newblock {\em Generalized difference methods for differential equations},
  volume 226 of {\em Monographs and Textbooks in Pure and Applied Mathematics}.
\newblock Marcel Dekker, Inc., New York, 2000.
\newblock Numerical analysis of finite volume methods.

\bibitem{Liyonghai2012MathComp}
Y.~Li, S.~Shu, Y.~Xu, and Q.~Zou.
\newblock Multilevel preconditioning for the finite volume method.
\newblock {\em Math. Comp.}, 81:1399--1428, 2012.

\bibitem{Liebau1996}
F. Liebau.
\newblock The finite volume element method with quadratic basis functions.
\newblock {\em Computimg}, 57:281--299, 1996.

\bibitem{LYZ2015}
Y.~Lin, M.~Yang, and Q.~Zou.
\newblock {$L^2$} error estimates for a class of any order finite volume
  schemes over quadrilateral meshes.
\newblock {\em SIAM J. Numer. Anal.}, 53:2009--2029, 2015.

\bibitem{LL2012}
J.~Lv and Y.~Li.
\newblock Optimal biquadratic finite volume element methods on quadrilateral
  meshes.
\newblock {\em SIAM J. Numer. Anal.}, 50:2379--2399, 2012.

\bibitem{TSchmidt1993}
T. Schmidt.
\newblock Box schemes on quadrilateral meshes.
\newblock {\em Computing}, 51: 271--292, 1993.

\bibitem{Shewchuk2002}
J.~R. Shewchuk.
\newblock What is a good linear element? interpolation, conditioning, and
  quality measures.
\newblock In {\em Proceedings, 11th International Meshing Roundtable}, pages
  115--126, Sandia National Laboratories, Albuquerque, NM, 2002.

\bibitem{WL2016}
X.~Wang and Y.~Li.
\newblock {$L^2$} error estimates for high order finite volume methods on
  triangular meshes.
\newblock {\em SIAM J. Numer. Anal.}, 54:2729--2749, 2016.

\bibitem{WangLi2017}
X.~Wang and Y.~Li.
\newblock Superconvergence of quadratic finite volume method on triangular
  meshes.
\newblock ({\em submitted}, 2017).

\bibitem{Wathen87}
A.~J. Wathen.
\newblock Realistic eigenvalue bounds for the {G}alerkin mass matrix.
\newblock {\em IMA J. Numer. Anal.}, 7:449--457, 1987.

\bibitem{JXu2009}
J.~Xu and Q.~Zou.
\newblock Analysis of linear and quadratic simplicial finite volume methods for elliptic equations.
\newblock {\em Numer. Math.}, 111:469-492, 2009.

\bibitem{MYangCBiJLiu2009}
M. Yang, C. Bi, and J. Liu.
\newblock Postprocessing of a finite volume element method for semilinear
parabolic problems.
\newblock {\em ESIAM: M2AN}, 43: 957--971, 2009.

\bibitem{ZhangZou2015}
Z.~Zhang and Q.~Zou.
\newblock Vertex-centered finite volume schemes of any order over quadrilateral
  meshes for elliptic boundary value problems.
\newblock {\em Numer. Math.}, 130:363--393, 2015.

\bibitem{ZD2011}
L.~Zhu and Q.~Du.
\newblock Mesh-dependent stability for finite element approximations of
  parabolic equations with mass lumping.
\newblock {\em J. Comput. Appl. Math.}, 236:801--811, 2011.

\bibitem{Du-2014}
L.~Zhu and Q.~Du.
\newblock Mesh dependent stability and condition number estimates for finite
  element approximations of parabolic problems.
\newblock {\em Math. Comp.}, 83:37--64, 2014.

\end{thebibliography}

\end{document}